\DeclareMathOperator{\Spec}{Spec}
\DeclareMathOperator{\pr}{pr}
\DeclareMathOperator{\id}{id}
\DeclareMathOperator{\Hom}{Hom}
\DeclareMathOperator{\Coh}{Coh}
\DeclareMathOperator{\Ho}{H}
\DeclareMathOperator{\Hilb}{Hilb}
\DeclareMathOperator{\supp}{supp}
\DeclareMathOperator{\ord}{ord}
\DeclareMathOperator{\cone}{cone}
\DeclareMathOperator{\conj}{\mathsf{conj}}
\DeclareMathOperator{\Cent}{\mathsf C}
\DeclareMathOperator{\CY}{\mathsf{CY}}
\DeclareMathOperator{\HC}{\mathsf{HC}}
\DeclareMathOperator{\HH}{\mathsf{HH}}
\newcommand{\qqed}{\hspace*{\fill}$\Box$}
\newcommand{\Db}{{\rm D}^{\rm b}}
\newcommand{\Aut}{{\rm Aut}}
\DeclareMathOperator{\Pic}{{\rm Pic}}
\newcommand{\cI}{{\mathcal I}}
\newcommand{\IC}{\mathbb{C}}
\newcommand{\IP}{\mathbb{P}}
\DeclareMathOperator{\Inf}{\mathsf{Inf}}
\DeclareMathOperator{\Res}{\mathsf{Res}}
\DeclareMathOperator{\FM}{\mathsf{FM}}
\DeclareMathOperator{\MM}{\mathsf{M}}
\newcommand{\sym}{\mathfrak S}
\newcommand{\C}{\mathbb C}
\newcommand{\N}{\mathbb N}
\newcommand{\Z}{\mathbb Z}
\newcommand{\cA}{\mathcal A}
\newcommand{\cF}{\mathcal F}
\newcommand{\cE}{\mathcal E}
\newcommand{\cX}{\mathcal X}
\newcommand{\cY}{\mathcal Y}
\newcommand{\cZ}{\mathcal Z}
\newcommand{\cC}{\mathcal C}
\newcommand{\cD}{\mathcal D}
\newcommand{\cS}{\mathcal S}
\newcommand{\cP}{\mathcal P}
\newcommand{\cQ}{\mathcal Q}
\newcommand{\cT}{\mathcal T}
\newcommand{\cL}{\mathcal L}
\newcommand{\cK}{\mathcal K}
\newcommand{\alt}{\mathfrak a}
\newcommand{\reg}{\mathcal O}
\newcommand{\regcan}{\mathcal O^{\mathsf{can}}}
\newcommand{\eps}{\varepsilon}
\renewcommand{\P}{\mathbb P}
\renewcommand{\theta}{\vartheta}
\renewcommand{\rho}{\varrho}
\renewcommand{\phi}{\varphi}
\renewcommand{\_}{\underline{\,\,\,\,}}
\newtheorem{theorem}{Theorem}[section]
\newtheorem{prop}[theorem]{Proposition}
\newtheorem{lemma}[theorem]{Lemma}
\newtheorem{cor}[theorem]{Corollary}
\theoremstyle{definition}
\newtheorem{definition}[theorem]{Definition}
\newtheorem{remark}[theorem]{Remark}
\begin{document}

\title[Equivalences of equivariant...]{Equivalences of equivariant derived categories}

\author[A.\ Krug]{Andreas Krug}
\address{Andreas Krug, Mathematics Institute,
University of Warwick,
CV4 9EL,
Coventry, United Kingdom}
\email{a.krug@warwick.ac.uk}

\author[P.\ Sosna]{Pawel Sosna}
\address{Pawel Sosna, Fachbereich Mathematik der Universit\"at Hamburg,
Bundesstrasse 55,
20146 Hamburg, Germany}
\email{pawel.sosna@math.uni-hamburg.de}

\begin{abstract}

We investigate conditions for a Fourier-Mukai transform between derived categories of coherent sheaves on smooth projective stacks endowed with actions by finite groups to lift to the associated equivariant derived categories. As an application we give a condition under which a global quotient stack cannot be derived equivalent to a variety. We also apply our techniques to generalised Kummer stacks and symmetric quotients.
   
\end{abstract}
\maketitle

\section{Introduction}
If $Z$ is a smooth projective variety and $\Db(Z)$ its bounded derived category of coherent sheaves, it is interesting to understand the group of autoequivalences of $\Db(Z)$ or to investigate when $\Db(Z)$ is equivalent to $\Db(Z')$ for some other variety $Z'$. If the latter holds, it is reasonable to try and study what happens to the equivalence if we pass from $Z$ and $Z'$ to varieties which are closely connected to them geometrically. 

As an example, we can consider a smooth projective variety with torsion canonical bundle of order $n=\ord(\omega_X)$ and its canonical cover $p_X\colon \widetilde X\to X$, where $\widetilde X=\Spec(\reg_X\oplus\omega_X\oplus\ldots\oplus \omega_X^{n-1})$; see \cite{BM} or Section \ref{geomint} for details. There is a free action of $G=\mathbb{Z}/n\mathbb{Z}=\langle \tau_X\rangle$ on $\widetilde X$ such that $\widetilde X/G=X$. If $Y$ is a second smooth projective variety with torsion canonical bundle of the same order, Bridgeland and Maciocia, see \cite{BM}, show the following theorem:

\emph{Any equivalence $F\colon \Db(X)\cong \Db(Y)$ lifts to an equivalence $\Db(\widetilde X)\cong\Db(\widetilde Y)$. Conversely, if $\widetilde F\colon \Db(\widetilde X)\cong \Db(\widetilde Y)$ is an equivariant equivalence, that is, there exists an integer $k$ coprime to $n$ such that $\tau_Y^{k*}\circ \widetilde F\cong \widetilde F\circ \tau_X^*$, then $\widetilde F$ descends.}

In \cite{LombPopa}, there is a similar statement where the canonical bundle is replaced by a torsion bundle $L\in \Pic^0(X)$. 

In this paper we generalise the above results by using a strictly equivariant approach to lift and descent. In Section \ref{th3} we first forget about the geometric situation of coverings and give criteria for an equivalence between the derived categories of smooth projective stacks associated to normal projective varieties with only quotient singularities (compare \cite{Kawstack}) to lift to an equivalence between the categories linearised by certain finite subgroups of the groups of standard autoequivalences, see Theorem \ref{mainthm}. These results are achieved by a straightforward generalisation of the theory of equivariant Fourier-Mukai transforms as developed in \cite{Ploog-equiv} and they translate to the following statement (which combines Propositions \ref{descentthm} and \ref{liftthm} with Remark \ref{simplecyclic}) for descent and lift along quotients of smooth projective stacks by group actions generalising the criteria of \cite{BM} and \cite{LombPopa}.
See Definitions \ref{beta-equivariant}, \ref{mu-linearisable}, \ref{mu-equivariant} and Section \ref{geomint} for the notions used.

\begin{theorem}
Let $\pi\colon \widetilde\cX\to \cX$ and $\pi\colon \widetilde\cY\to \cY$ be Galois covers of smooth projective stacks with groups of deck transformations $H$ and $H'$, respectively. Let $\Phi=\FM_P\colon \Db(\widetilde\cX)\to \Db(\widetilde\cY)$ be a Fourier-Mukai transform. If $P\in \Db(\widetilde \cX\times \widetilde\cY)$ is $\mu$-linearisable for some isomorphism $\mu\colon H\cong H'$, then there is a $\hat \mu$-equivariant descent $\Psi\colon \Db(\cX)\to \Db(\cY)$. In addition, if $\Phi$ is an equivalence or fully faithful, so is $\Psi$. If $\Phi$ is spherical or $\P^n$ satisfying a technical assumption, then the same holds for $\Psi$. If $H$ and $H'$ are abelian, there is an analogous criterion for lift of Fourier-Mukai transforms along the Galois covers. If $H$ and $H'$ are cyclic, the condition that $\cP$ is $\mu$-linearisable can be replaced by the condition that $\Phi$ is $\mu$-equivariant.     
\end{theorem}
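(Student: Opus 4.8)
The plan is to reduce everything to the linearised (equivariant) picture and then invoke Theorem \ref{mainthm}. The first and conceptually central step is the translation: since $\pi\colon\widetilde\cX\to\cX$ is a Galois cover with deck group $H$, we have $\cX\cong[\widetilde\cX/H]$ and hence a canonical identification $\Db(\cX)\cong\Db_H(\widetilde\cX)$ of the derived category downstairs with the $H$-linearised derived category upstairs; likewise $\Db(\cY)\cong\Db_{H'}(\widetilde\cY)$. Under this dictionary the groups $H,H'$ sit inside the groups of standard autoequivalences acting by pullback along the deck transformations, and a $\mu$-linearisation of the kernel $P$ is precisely a linearisation of $\Phi=\FM_P$ against the subgroups $H\subset\DAut(\widetilde\cX)$ and $H'\subset\DAut(\widetilde\cY)$ identified via $\mu$.

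With this in place, the descent statement is a direct application of Theorem \ref{mainthm}: a $\mu$-linearisable $P$ lifts $\Phi$ to a functor $\Db_H(\widetilde\cX)\to\Db_{H'}(\widetilde\cY)$, which under the above identification is the sought-for $\Psi\colon\Db(\cX)\to\Db(\cY)$, and the preservation of full faithfulness, of being an equivalence, and of the spherical or $\P^n$-property (under the stated technical assumption) is inherited verbatim. The $\hat\mu$-equivariance of $\Psi$ is not an extra input but a consequence of the construction: the residual symmetries of $\Db_H(\widetilde\cX)$ after linearisation, namely twisting by the characters $\hat H$, are intertwined by the lifted functor through the isomorphism $\hat\mu$ on character groups induced by $\mu$, and one verifies this by naturality of the linearisation.

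For the lift direction I would exploit the duality available when $H,H'$ are abelian. Here the point is that $\hat H$ acts on $\Db_H(\widetilde\cX)\cong\Db(\cX)$ and that de-equivariantisation recovers the cover, $\Db_{\hat H}\bigl(\Db_H(\widetilde\cX)\bigr)\cong\Db(\widetilde\cX)$, and symmetrically for $\cY$. Thus lifting a $\hat\mu$-equivariant Fourier-Mukai transform $\Psi\colon\Db(\cX)\to\Db(\cY)$ along $\pi$ is literally the descent problem for the actions of $\hat H$ and $\hat{H'}$, and applying Theorem \ref{mainthm} once more, now with $\hat H,\hat{H'}$ in place of $H,H'$, produces $\Phi\colon\Db(\widetilde\cX)\to\Db(\widetilde\cY)$ together with all the preservation properties. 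This is why the abelian hypothesis enters: one needs $\hat{\hat H}\cong H$ for the two applications to be genuinely inverse.

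The remaining refinement is the cyclic case. Here I would argue that $\mu$-equivariance of $\Phi$, i.e.\ commutation with the actions up to $\mu$, upgrades automatically to $\mu$-linearisability of $P$. Indeed, equivariance produces for a generator $g\in H$ an isomorphism of kernels realising $g^*P\cong P$ after transport by $\mu$; the only obstruction to promoting this datum to a genuine linearisation is a class in $H^2(H,\C^\times)$, which vanishes for cyclic $H$. Concretely one rescales the chosen isomorphism for the generator by a root of the scalar by which its $n$-fold composite differs from the identity, forcing the cocycle condition. I expect the main obstacle to lie not in these formal reductions but in checking that the twisting and equivariance structures on the two sides are carried into one another compatibly, that is, in pinning down $\hat\mu$ and verifying naturality through the double application of the linearisation machinery; the cyclic rescaling argument and the identification $\Db(\cX)\cong\Db_H(\widetilde\cX)$ are comparatively routine.
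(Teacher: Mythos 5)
Your reduction of the descent statement to the equivariant lifting problem---via $\cX\cong[\widetilde\cX/H]$ and the identification $\Db(\cX)\cong\Db_H(\widetilde\cX)$ matching $\pi^*$ with $\Res$ and $\pi_*$ with $\Inf$---followed by an application of Theorem \ref{mainthm}, and your treatment of the lift direction through the duality $\Db(\widetilde\cX)\cong\Db_{\hat H}(\cX)$ for abelian deck groups (Proposition \ref{abdual}), is exactly the paper's proof of Propositions \ref{descentthm} and \ref{liftthm}; the built-in $\hat\mu$-equivariance you describe is the content of Lemma \ref{liftlem}.

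Your cyclic refinement, however, has a genuine gap: the implication ``$\mu$-equivariance of $\Phi$ implies $\mu$-linearisability of $P$'' is not true for arbitrary kernels, even when $H$ is cyclic, and your argument silently uses two hypotheses. First, to pass from an isomorphism of \emph{functors} $\Phi\circ h^*\cong\mu(h)^*\circ\Phi$ to an isomorphism of \emph{kernels} $P\cong(\bar h\times\mu(h))^*P$ you need uniqueness of Fourier--Mukai kernels, which is not automatic: the paper invokes \cite{CStwisted}, which requires $\Phi$ to satisfy condition (\ref{FMcond}) (guaranteed, e.g., when $\Phi$ is fully faithful); this is Lemma \ref{cyclic-inv-equiv}(iv). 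Second, your rescaling trick---correcting the isomorphism attached to a generator by an $n$-th root of the scalar by which its $n$-fold composite differs from the identity---presupposes that this composite \emph{is} a scalar, i.e.\ that $P$ is simple; likewise the obstruction class in $\Ho^2(H,\C^*)$ of Lemma \ref{simple} is only defined for simple objects. This is why Remark \ref{simplecyclic} states the cyclic criterion under the hypothesis that $P$ is simple, which holds automatically for fully faithful $\Phi$ by (\ref{RFformula}) but not in general. Since the theorem is applied to equivalences, fully faithful, spherical and $\P^n$-functors, both hypotheses are available in context, but your proof must invoke them: without simplicity and kernel uniqueness the step fails. The same conflation of ``equivariant'' with ``linearisable'' appears in your lift paragraph, where for non-cyclic abelian groups the correct criterion is $\mu$-linearisability of the kernel of the downstairs functor for some $\mu\colon\hat H\cong\hat H'$, not mere equivariance of that functor.
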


The paper is organised as follows. In Section 2 we collect background material on group actions, equivariant categories and Fourier-Mukai transforms. Section 3 is devoted to the study of lifts of FM transforms to equivariant categories. The achieved results are interpreted geometrically in Section 4. In Section \ref{Section-Applications} we give more specific applications. For instance, 
we describe conditions under which global quotient stacks cannot be derived equivalent to a smooth variety; see Proposition \ref{noFMpartners}. Furthermore, we study the derived category of symmetric quotients and closely related stacks.
In the appendix we investigate necessary conditions for lifts.

\smallskip
\textbf{Conventions.} Varieties or stacks are assumed to be smooth and projective unless stated otherwise and to be defined over the complex numbers. All functors between derived categories are assumed to be derived although this will not be reflected in the notation.

\smallskip
\textbf{Acknowledgements.} A.\ K.\ was financially supported by the research grant KR 4541/1-1 of the DFG and P.\ S.\ was partially financially supported by the RTG 1670 of the DFG. We thank David Ploog for his comments.

\section{Preliminaries}
\subsection{Group actions on categories}\label{groupactions}
Let $G$ be a finite group and $\cT$ a category. A \textit{categorical action} of $G$ on $\cT$ is the data of an autoequivalence 
$g^*\colon \cT\to \cT$ for every $g\in G$ together with isomorphisms $g^*h^*\cong (hg)^*$ satisfying a natural cocycle condition (see \cite{Del-cat} or \cite{Soscat} for details). Given such an action, a \textit{$G$-linearised} (or \textit{$G$-equivariant}) object is an object $E\in \cT$ together with a collection of isomorphisms $\lambda_g\colon E\to g^*E$ for $g\in G$ such that $\lambda_e=\id_E$ and such that the composition 
\[E\xrightarrow{\lambda_g}g^*E\xrightarrow{g^*\lambda_h}g^*h^*E\cong (hg)^*E\]
equals $\lambda_{hg}$ for every pair $g,h\in G$. Given two $G$-linearised objects $(E,\lambda)$ and $(F,\mu)$ we have a $G$-action on $\Hom_\cT(E,F)$ given by $\phi\cdot g:=\mu_g^{-1}\circ g^*(\phi)\circ \lambda_g$. We define the morphisms in the \textit{category of $G$-linearised objects $\cT^G$}, sometimes also denoted by $\cT_G$, to be the invariants under this action, i.e. 
\begin{align}\label{invhom}
\Hom_{\cT^G}((E,\lambda),(F,\mu)):=\Hom(E,F)^G
\end{align}
consists of morphisms $\phi\colon E\to F$ commuting with the linearisations. 

Let a finite group $G$ act on a $\IC$-linear category $\cT$ via $\IC$-linear autoequivalences. Then the group of characters
$\hat G=\Hom(G,\IC^*)$ acts on $\cT^G$ by tensor product. This means that a $G$-equivariant object $(E,\lambda)$ is sent to $(E,\lambda)\otimes \chi:= (E,\lambda')$ with $\lambda'_g=\chi(g)\cdot \lambda_g$. The action on morphisms is trivial.

   
%
For the proof of the following duality see \cite{Elagin-equiv}. Recall that a category is called \emph{Karoubian} if all idempotents have kernels.

\begin{prop}\label{abdual}
If $G$ is a finite abelian group acting on a Karoubian $\IC$-linear category $\cT$, then there is an equivalence $\cT\cong (\cT^G)^{\hat G}$.\qqed 
\end{prop}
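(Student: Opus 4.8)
The plan is to write down an explicit functor $\Phi\colon\cT\to(\cT^G)^{\hat G}$ and to prove it is an equivalence by checking full faithfulness and essential surjectivity separately. On objects I set $\Phi(E):=\operatorname{Ind}(E)=\bigoplus_{g\in G}g^*E$, equipped with the standard $G$-linearisation $\Lambda$ that permutes the summands via the cocycle isomorphisms $g^*h^*\cong(hg)^*$, so that $\operatorname{Ind}(E)\in\cT^G$; on morphisms $\Phi(f)=\bigoplus_g g^*(f)$. The first thing to observe is that $\operatorname{Ind}(E)$ also carries a canonical $\hat G$-linearisation. Since $\operatorname{Ind}(E)\otimes\psi$ has the same underlying object with linearisation $(\psi\Lambda)_h=\psi(h)\Lambda_h$, one checks that the endomorphism $\nu_\psi$ of $\bigoplus_g g^*E$ which rescales the summand $g^*E$ by $\psi(g)^{-1}$ is an isomorphism $\operatorname{Ind}(E)\to\operatorname{Ind}(E)\otimes\psi$ in $\cT^G$, and that $\psi\mapsto\nu_\psi$ satisfies the cocycle condition because $\psi\mapsto\psi(g)^{-1}$ is a homomorphism. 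Thus $\Phi(E):=(\operatorname{Ind}(E),\Lambda,\nu)$ is a well-defined object of $(\cT^G)^{\hat G}$.

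For full faithfulness I unwind the definition (\ref{invhom}) of equivariant morphisms twice: $\Hom_{(\cT^G)^{\hat G}}(\Phi E,\Phi E')$ is the space of morphisms $\operatorname{Ind}E\to\operatorname{Ind}E'$ commuting with both $\Lambda$ and $\nu$. Since $\operatorname{Ind}$ is left adjoint to the forgetful functor $\cT^G\to\cT$, the $G$-equivariant morphisms already form $\Hom_{\cT}(E,\bigoplus_g g^*E')=\bigoplus_{g}\Hom_{\cT}(E,g^*E')$. Here $\IC$-linearity enters through the averaging projector $\tfrac1{|G|}\sum_g(\,\cdot\,)\cdot g$, which is defined because $|G|$ is invertible in $\IC$. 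Imposing in addition $\hat G$-invariance, the $\nu_\psi$ act on the summand indexed by $g$ through the scalar $\psi(g)^{-1}$, so orthogonality of characters annihilates every summand except $g=e$ and leaves precisely $\Hom_{\cT}(E,E')$. Keeping track of the identifications shows this isomorphism is the one induced by $\Phi$, so $\Phi$ is fully faithful.

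For essential surjectivity, which is where the Karoubian hypothesis is indispensable, I use a Fourier decomposition. Given $X=((E,\lambda),\nu)\in(\cT^G)^{\hat G}$, the underlying isomorphisms $\nu_\psi\in\End_{\cT}(E)$ satisfy $\nu_\psi\circ\nu_{\psi'}=\nu_{\psi\psi'}$ (the twisting autoequivalences act trivially on morphisms), so the elements $e_g:=\tfrac1{|\hat G|}\sum_{\psi\in\hat G}\psi(g)\,\nu_\psi$ for $g\in G$ are, by orthogonality of characters, mutually orthogonal idempotents in $\End_{\cT}(E)$ with $\sum_{g}e_g=\id_E$. As $\cT$ is Karoubian they split, giving $E\cong\bigoplus_{g}E_g$ with $E_g=\im(e_g)$, and I define $\Psi(X):=E_e$. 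Applying this to $\Phi(E_0)$ one finds that $e_k$ is exactly the projection onto the $k$-th summand, so $\Psi\circ\Phi\cong\id_{\cT}$. The main obstacle is the reverse identification $\Phi\circ\Psi\cong\id$. The key computation is that, because each $\nu_\psi$ is a morphism in $\cT^G$, one has $h^*(e_g)=\lambda_h\,e_{gh}\,\lambda_h^{-1}$, so that $\lambda_h$ restricts to an isomorphism $E_h\xrightarrow{\sim}h^*E_e$. These isomorphisms assemble into an identification $(E,\lambda)\cong\bigoplus_g g^*E_e=\operatorname{Ind}(E_e)$ in $\cT^G$ under which $\nu$ becomes the canonical $\hat G$-linearisation constructed in the first step; verifying that all these compatibilities hold simultaneously is the bookkeeping heart of the argument, and it uses the splitting of idempotents once more to promote the summand decomposition to an honest isomorphism with $\Phi(E_e)$.
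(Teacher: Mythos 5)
Your proof is correct, but note that there is no in-paper argument to compare it against: the paper states Proposition \ref{abdual} with a reference to Elagin \cite{Elagin-equiv} and no proof, so your argument is genuinely self-contained where the paper defers to the literature. Your route is the standard ``Fourier decomposition'' argument: $\Phi=\Inf$ endowed with the canonical $\hat G$-structure $\nu_\psi$ (the scalar $\psi(g)^{-1}$ on the summand $g^*E$); full faithfulness via the adjunction $\Inf\dashv\Res$, under which the $\hat G$-invariance condition forces every component in $\Hom_{\cT}(E,g^*E')$ with $g\neq e$ to vanish because characters separate the points of $G$; and essential surjectivity via the idempotents $e_g=\tfrac{1}{|\hat G|}\sum_{\psi}\psi(g)\nu_\psi$, split using the Karoubian hypothesis, with the relation $h^*(e_g)=\lambda_h e_{gh}\lambda_h^{-1}$ showing that $\lambda_h$ restricts to an isomorphism $E_h\xrightarrow{\sim}h^*E_e$ and the cocycle condition for $\lambda$ being exactly what makes these restrictions assemble into a $G$-equivariant isomorphism $(E,\lambda)\cong\Inf(E_e)$ intertwining $\nu$ with the canonical $\hat G$-structure; all of these computations check out. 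Compared with Elagin's treatment, which is embedded in a more general monadic/descent formalism for equivariant categories, your proof buys explicitness: the inverse functor is simply ``take the image of $e_e$'', and one sees precisely where each hypothesis enters ($\IC$-linearity through the characters and the factor $1/|\hat G|$, Karoubianness through splitting the $e_g$). Two minor quibbles, neither a gap: the adjunction $\Inf\dashv\Res$ holds in any additive setting with its explicit unit and counit, so the ``averaging projector'' remark in your full-faithfulness paragraph is a red herring --- $\IC$-linearity is not needed for that identification, only for the character-theoretic steps; and you implicitly use that $\cT$ admits finite direct sums (already needed to define $\Inf$), which is the intended reading of ``$\IC$-linear category'' in the statement.
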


We say that an object $E\in \cT$ is \textit{linearisable} if it admits a linearisation.
There is also the weaker notion of a \textit{G-invariant} object $E\in \cT$. This means that there are isomorphisms $E\cong g^*E$ without requiring any condition on their compositions. The relationship of the two notions for simple objects is as follows.

\begin{lemma}[\cite{Ploog-equiv}]\label{simple}
Let $G$ act on a $\IC$-linear category $\cT$ by $\IC$-linear automorphisms and let $E\in \cT$ be a simple object, i.e. $\Hom(E,E)=\IC$.
Then there exists a group cohomology class $[E]\in \Ho^2(G,\IC^*)$ with the property that $E$ is linearisable if and only if $[E]=0$. Furthermore, if $[E]=0$, then the set of isomorphism classes of $G$-linearisations of $E$ is a free $\hat G$-orbit under the $\hat G$-action described above.
\end{lemma}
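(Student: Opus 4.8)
The plan is to reduce to the case that $E$ is $G$-invariant, to attach to such an $E$ a $2$-cocycle on $G$ measuring the failure of a chosen family of isomorphisms to be a linearisation, and then to read off all three assertions from elementary cocycle bookkeeping together with the simplicity of $E$. First I would observe that any linearisable object is in particular $G$-invariant, so if $E$ is not $G$-invariant it is not linearisable and nothing is to be shown; hence I may assume $E\cong g^*E$ for every $g\in G$. Writing $\varepsilon_{g,h}\colon g^*h^*\Rightarrow(hg)^*$ for the structure isomorphisms of the action, I note that since $g^*$ is an equivalence and $E$ is simple and $G$-invariant, $\Hom(E,g^*E)\cong\Hom(E,E)=\IC$ is one-dimensional, so the isomorphisms $E\to g^*E$ form a torsor under $\IC^*$. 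I would then choose one isomorphism $\phi_g\colon E\to g^*E$ for each $g$, normalised so that $\phi_e=\id_E$, and define $c(g,h)\in\IC^*$ by
\[\varepsilon_{g,h}\circ g^*(\phi_h)\circ\phi_g=c(g,h)\cdot\phi_{hg}.\]
By construction $(\phi_g)_{g\in G}$ is a linearisation exactly when $c\equiv 1$.

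Next I would verify that $c$ is a normalised $2$-cocycle: evaluating the composite attached to a triple $g,h,k$ in its two associations and invoking the coherence condition satisfied by the $\varepsilon_{g,h}$ shows $c\in Z^2(G,\IC^*)$, and $\phi_e=\id_E$ makes it normalised. Replacing each $\phi_g$ by $t_g\phi_g$ with $t_g\in\IC^*$, $t_e=1$, multiplies $c$ by the coboundary of $(t_g)$, so the class $[E]:=[c]\in\Ho^2(G,\IC^*)$ is independent of the choices. Since a linearisation is precisely a family with $c\equiv 1$, and the attainable cocycles are exactly those in the class $[c]$, such a family exists if and only if $[c]$ is a coboundary, that is, if and only if $[E]=0$.

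Finally, assuming $[E]=0$, I would fix one linearisation $\lambda$ and write any other as $\lambda'_g=\chi(g)\lambda_g$ for a unique function $\chi\colon G\to\IC^*$ (once more because $\Hom(E,g^*E)$ is a line). Imposing the linearisation identity on both $\lambda$ and $\lambda'$ and using that $g^*$ is $\IC$-linear forces $\chi(hg)=\chi(g)\chi(h)$, i.e.\ $\chi\in\hat G$, and conversely each $\chi\in\hat G$ produces the linearisation $(E,\lambda)\otimes\chi$; thus $\hat G$ acts transitively on linearisations. To pass to isomorphism classes I would use simplicity once more: an isomorphism $(E,\lambda)\otimes\chi\to(E,\lambda)$ in $\cT^G$ is a scalar $s\cdot\id_E$, and compatibility with the linearisations reads $s\chi(g)\lambda_g=s\lambda_g$, forcing $\chi=1$; hence the action on isomorphism classes is free, giving a single free $\hat G$-orbit.

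I expect the only genuinely computational step to be the cocycle verification, where one must track the structure isomorphisms $\varepsilon_{g,h}$ carefully and apply their coherence condition; this is the main, though routine, obstacle, while everything else is forced by the one-dimensionality of $\Hom(E,g^*E)$ and the simplicity of $E$.
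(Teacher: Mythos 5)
Your proof is correct, and it is essentially the paper's own argument: the paper simply cites \cite[Lem.\ 1]{Ploog-equiv} and notes that Ploog's proof carries over verbatim to a general $\IC$-linear category, and that proof is precisely your construction --- reduce to the $G$-invariant case, use simplicity to make $\Hom(E,g^*E)$ a $\IC^*$-torsor, extract the $2$-cocycle $c(g,h)$ from the chosen isomorphisms $\phi_g$, observe that rescaling the $\phi_g$ changes $c$ by a coboundary, and deduce the torsor statement for linearisations from one-dimensionality of the Hom-spaces. So you have written out in full exactly the proof the paper delegates to the reference.
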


\begin{proof}
This result is stated and proved in \cite[Lem.\ 1]{Ploog-equiv} in the special case that $\cT=\Db(X)$ for a smooth projective variety $X$ and $G\subset \Aut(X)$. The proof of the general result is exactly the same. 
\end{proof}

\begin{remark}\label{cysimple}
If $G=\Z/n\Z$ is cyclic, then $\Ho^2(G,\IC^*)=0$. Thus, in this case every $G$-invariant simple object is automatically linearisable.
\end{remark}
There exist natural functors between the category $\cT$ and the equivariant category $\cT^G$. Dropping the linearisations gives the \textit{restriction} (also called the \textit{forgetful}) functor $\Res\colon \cT^G\to \cT$, $(E,\lambda)\mapsto E$. 

Let $\cT=\cA$ be an additive (abelian) category and let the autoequivalences $g^*$ all be additive (exact). Then $\cA^G$ is again an additive (abelian) category (see \cite[Prop.\ 3.2]{Soscat}).
In this case there exists the \textit{inflation} functor $\Inf\colon \cA\to \cA^G$ defined as follows:
For $E\in \cA$ we have 
\[\Inf(E)=\bigoplus\limits_{g\in G}g^*E\]
equipped with the linearisation given by permuting the direct summands. If $G'\subset G$ is a subgroup, there is also the partial restriction functor $\Res_{G'}^G\colon \cT^G\to \cT^{G'}$ as well as a relative version of the inflation functor, namely 
\[\Inf^G_{G'}\colon \cA^{G'}\to \cA^G,\quad E\mapsto \bigoplus\limits_{[g]\in G/G'}g^*E\]
and the linearisation can be described explicitly; see \cite{Ploog-equiv}.

\begin{lemma}\label{ResInfadjunction}
The functor $\Inf_{G'}^G$ is left and right adjoint to $\Res^G_{G'}$.
\end{lemma}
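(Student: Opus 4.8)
The statement is the categorical analogue of Frobenius reciprocity: in the finite-index situation the two a priori different adjoints of restriction, induction and coinduction, coincide with the single functor $\Inf^G_{G'}$. The plan is to exhibit, for $E\in\cA^{G'}$ and $(F,\mu)\in\cA^G$, natural bijections
\[
\Hom_{\cA^G}(\Inf^G_{G'}E,(F,\mu))\cong \Hom_{\cA^{G'}}(E,\Res^G_{G'}(F,\mu)),
\]
witnessing $\Inf^G_{G'}$ as a left adjoint, together with
\[
\Hom_{\cA^G}((F,\mu),\Inf^G_{G'}E)\cong \Hom_{\cA^{G'}}(\Res^G_{G'}(F,\mu),E),
\]
witnessing it as a right adjoint, and then to check naturality in both variables.

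First I would treat the left-adjoint isomorphism. Recall $\Inf^G_{G'}E=\bigoplus_{[g]\in G/G'}g^*E$ with its explicit linearisation $\Lambda$, which permutes the summands according to the action of $G$ on the cosets and simultaneously applies the cocycle isomorphisms $g^*h^*\cong(hg)^*$. A morphism $\psi\in\Hom_{\cA^G}(\Inf^G_{G'}E,(F,\mu))$ is a $G$-equivariant map, and I would send it to its component $\psi_{[e]}\colon E=e^*E\to F$ along the distinguished summand indexed by the trivial coset. Since $G'$ stabilises this summand, the restriction of the equivariance of $\psi$ to $G'$ forces $\psi_{[e]}$ to be $G'$-equivariant, so it lies in $\Hom_{\cA^{G'}}(E,\Res^G_{G'}(F,\mu))$. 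Conversely, given a $G'$-equivariant $\phi\colon E\to F$, I would define a morphism out of $\Inf^G_{G'}E$ by prescribing its component on the summand $g^*E$ to be $\mu_g^{-1}\circ g^*(\phi)$; the $G'$-equivariance of $\phi$ together with the linearisation axioms for $\mu$ shows that this is independent of the chosen coset representative, and the explicit form of $\Lambda$ shows that the resulting map is $G$-equivariant. A direct verification shows that the two assignments are mutually inverse.

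For the right-adjoint isomorphism I would argue dually: either repeat the construction with source and target exchanged (projecting $F$ onto, and building a morphism into, the distinguished summand), or observe that, since $\Inf^G_{G'}E$ is a finite biproduct, the coproduct-over-cosets description used above is simultaneously a product-over-cosets description, so the same object also represents the coinduced functor. This additive coincidence of finite products and coproducts is precisely the reason induction and coinduction agree in the finite-index case. Naturality of all the bijections in $E$ and in $(F,\mu)$ is then a formal check.

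The only genuine obstacle is the bookkeeping with the explicit linearisation $\Lambda$ of $\Inf^G_{G'}E$: one must keep careful track of how $\Lambda_h$ permutes the coset-indexed summands and twists them by the cocycle isomorphisms, and then verify both the well-definedness on cosets (using $G'$-equivariance of $\phi$) and the $G$-equivariance of the extended morphism. Once the indexing conventions of \cite{Ploog-equiv} are fixed these are routine, if slightly tedious, diagram chases, and the remainder of the argument is purely formal.
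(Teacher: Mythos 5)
Your proposal is correct: the explicit Hom-set bijections built from the distinguished (trivial-coset) summand of $\Inf^G_{G'}E$, together with the observation that the finite biproduct makes induction and coinduction coincide, is the standard argument for this adjunction. The paper itself gives no details --- its proof is a citation of \cite[Lem.\ 3.7]{Elagin-equiv} for the case $G'=1$ with the remark that the general case is similar --- so your write-up supplies essentially the same verification that the citation stands in for, just carried out directly for arbitrary $G'\subset G$.
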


\begin{proof}
See \cite[Lem.\ 3.7]{Elagin-equiv} for the case that $G'=1$. The general case is similar. 
\end{proof}

\begin{remark}
If $\cA$ is a symmetric monoidal category (see \cite[Ch.\ VII]{Maclane}), that is, there is a bifunctor $\otimes$ satisfying the usual properties, and if the group $G$ acts by monoidal equivalences, then $\cA^G$ inherits a monoidal structure. Indeed, if $(A,\lambda)$ and $(B,\mu)$ are linearised objects, then $\lambda\otimes\mu$ gives a linearisation of $A\otimes B$, since $\lambda_g\otimes\mu_g\colon A\otimes B\cong g^*(A)\otimes g^*(B)\cong g^*(A\otimes B)$. If, in addition, there is a unit with respect to $\otimes$ and $g^*$ sends the unit to itself for all $g\in G$, then $\cA^G$ also inherits a unit.
\end{remark}

\begin{remark}\label{equivtriangulated}
If $\cA$ is abelian with enough injective objects, a categorical action of $G$ on $\cA$ induces a categorical action of $G$ on $\Db(\cA)$. These actions are compatible in the sense that the categories $\Db(\cA^G)$ and $\Db(\cA)^G$ are equivalent; see \cite{Chen} or \cite{Elagin-equiv}. Note that if $G$ acts on a triangulated category $\cT$, there is a priori no reason for $\cT^G$ to be a triangulated category since cones are not functorial and there is no canonical way to define a linearisation of a cone of a morphism between equivariant objects in $\cT$; see \cite{Soscat} and \cite{Elagin-equiv} for an approach to circumvent this problem using differential graded enhancements.
\end{remark}

\subsection{Smooth projective stacks and Fourier-Mukai transforms}
The spaces we will work with are smooth projective stacks satisfying the assumptions of \cite{Kawstack}. This means that whenever we speak of a \textit{smooth projective stack} in the following, we mean a stack $\cX$ which is naturally associated with a normal projective variety with only quotient singularities; see \cite[Sect.\ 4]{Kawstack} for details. 

Let $\cX$ and $\cY$ be smooth projective stacks. For any object  $P\in \Db(\cX\times\cY)$ there is the associated \textit{Fourier-Mukai transform}
\[
 \FM_P:=\pr_{\cY*}\bigl(\pr_{\cX}^*(\_)\otimes P\bigr)\colon \Db(\cX)\to \Db(\cY).
\]
Let $\cZ$ be a third smooth projective stack and $Q\in \Db(\cY\times \cZ)$. Then $\FM_Q\circ \FM_P\cong \FM_{Q\star P}$, where 
\[
 Q\star P:=\pr_{\cX\cZ*}(\pr_{\cY\cZ}^*Q\otimes \pr_{\cX\cY}^*P)
\]
is the \textit{convolution product}.

Note that the kernel $P$ also induces a second Fourier-Mukai transform $\Db(\cY)\to \Db(\cX)$ in the opposite direction. To avoid the risk of confusing the two FM transforms we will sometimes write them as $\FM_P^{\cX\to \cY}\colon \Db(\cX)\to \Db(\cY)$ and $\FM_P^{\cX\gets \cY}\colon \Db(\cY)\to \Db(\cX)$, respectively. 

\begin{lemma}\label{Orlovlem}
Let $\cX_1$, $\cX_2$, $\cY_1$, and $\cY_2$ be smooth projective stacks, and let $R\in\Db(\cX_1\times \cX_2)$ and $P_i\in\Db(\cX_i\times \cY_i)$ for $i=1,2$ be arbitrary. We consider the exterior tensor product $P_1\boxtimes P_2\in \Db((\cX_1\times \cX_2)\times (\cY_1\times \cY_2))$ and set $S:=\FM_{P_1\boxtimes P_2}(R)\in \Db(\cY_1\times \cY_2)$. Then
\[
 \FM_S^{\cY_1\to \cY_2}\cong \FM_{P_2}^{\cX_2\to \cY_2}\circ \FM_R^{\cX_1\to \cX_2}\circ \FM_{P_1}^{\cX_1\gets \cY_1}\,.
\]
\end{lemma}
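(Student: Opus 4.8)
The plan is to realise the right-hand side as a single Fourier--Mukai transform whose kernel is an iterated convolution, and then to identify that kernel with $S$ by an explicit base-change computation on the product $V:=\cX_1\times\cX_2\times\cY_1\times\cY_2$.

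First I would rewrite each of the three functors in the composition as a Fourier--Mukai transform and apply the composition rule $\FM_Q\circ \FM_P\cong \FM_{Q\star P}$ twice, bracketing from the inside. The rightmost transform $\FM_{P_1}^{\cX_1\gets \cY_1}\colon \Db(\cY_1)\to \Db(\cX_1)$ has kernel $P_1$ viewed in $\Db(\cY_1\times \cX_1)$, so that $\FM_R^{\cX_1\to \cX_2}\circ \FM_{P_1}^{\cX_1\gets \cY_1}\cong \FM_{R\star P_1}$ with $R\star P_1\in \Db(\cY_1\times \cX_2)$, and composing once more with $\FM_{P_2}^{\cX_2\to \cY_2}$ shows that the whole composite equals $\FM^{\cY_1\to \cY_2}_{P_2\star (R\star P_1)}$. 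Since isomorphic kernels induce isomorphic transforms, it then suffices to prove the kernel identity
\[
 P_2\star (R\star P_1)\;\cong\; S\;=\;\pr_{\cY_1\cY_2*}\bigl(\pr_{\cX_1\cX_2}^*R\otimes \pr_{\cX_1\cY_1}^*P_1\otimes \pr_{\cX_2\cY_2}^*P_2\bigr)
\]
in $\Db(\cY_1\times \cY_2)$, where the projections are those from $V$ and I have used $P_1\boxtimes P_2\cong \pr_{\cX_1\cY_1}^*P_1\otimes \pr_{\cX_2\cY_2}^*P_2$.

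To establish this identity I would compute the convolution from the inside out, writing $R\star P_1$ as a pushforward from $U_1:=\cY_1\times \cX_1\times \cX_2$ along the projection $c\colon U_1\to \cY_1\times \cX_2$ that forgets $\cX_1$, and $P_2\star(R\star P_1)$ as a pushforward from $U_2:=\cY_1\times \cX_2\times \cY_2$. The key observation is that $V$ is the fibre product $U_1\times_{\cY_1\times \cX_2}U_2$, the square being formed by $c$ and by the projection $d\colon U_2\to \cY_1\times \cX_2$ forgetting $\cY_2$. Flat base change along this Cartesian square rewrites the pullback $d^*(R\star P_1)=d^*c_*(\,\cdot\,)$ occurring inside $P_2\star(R\star P_1)$ as a pushforward from $V$; a single application of the projection formula along $V\to U_2$ then absorbs the remaining factor coming from $P_2$, and collapsing the two successive pushforwards $V\to U_2\to \cY_1\times \cY_2$ into the single projection $\pr_{\cY_1\cY_2}$ produces precisely $S$.

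The routine part is the bookkeeping of the projections: each kernel pulled back to $V$ must be rewritten in terms of the correspondingly indexed projection from $V$, using that a composite of product projections is again a product projection. The main obstacle, and the only substantive step, is the base-change move: one must verify that the square with corner $V$ is genuinely Cartesian and that flat base change applies. This holds because all the maps in sight are projections off products of smooth projective stacks, hence flat, so that the natural base-change morphism is an isomorphism; everything else reduces to the projection formula and the functoriality of pullback and tensor product.
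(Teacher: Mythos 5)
Your proof is correct and is essentially the argument the paper relies on: the paper's own proof is just a citation of Orlov's Proposition 2.1.6 together with the remark that it carries over verbatim to stacks, and that proof is exactly your reduction, via the convolution rule, to the kernel identity $P_2\star(R\star P_1)\cong S$, established by flat base change around the Cartesian square with corner $\cX_1\times\cX_2\times\cY_1\times\cY_2$ and the projection formula. The only stack-specific content --- that flat base change and the projection formula are available for Kawamata's smooth projective stacks --- is precisely what the paper's ``the proof is exactly the same'' asserts, and your write-up makes that explicit.
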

\begin{proof}
For varieties this is \cite[Prop.\ 2.1.6]{Orlsurvey}. The proof in the case of stacks is exactly the same. 
\end{proof}

\begin{remark}

Any stack in our sense is actually a global quotient stack of the form $[X/G]$, where $X$ is a quasi-projective scheme and $G$ a reductive algebraic group acting linearly; see, for example, \cite[Prop.\ 5.1]{Kresch}. Note that sheaves on such a quotient stack are the $G$-equivariant sheaves on $X$. However, since $G$ is not necessarily finite, the condition to be equivariant has to be reformulated, see, for instance, \cite[Subsect.\ 4.2]{HuyLehn}.

In all of our applications, the stacks will even be global finite quotient stacks, that is, smooth projective stacks of the form $\cX=[X/G]$ for $X$ a smooth projective variety and $G\subset \Aut(X)$ a finite subgroup. In this case there are equivalences $\Coh(\cX)\cong \Coh_G(X)$ and, accordingly, $\Db(\cX)\cong \Db_G(X):=\Db(\Coh_G(X))$. Under these equivalences, the Fourier-Mukai transforms are given by the equivariant Fourier-Mukai transforms of \cite{Ploog-equiv}.  
\end{remark}

\section{Lifts of equivalences to equivariant categories}\label{th3}
\subsection{Linearisations by standard autoequivalences}
Let $\cX$ be a smooth projective stack. Every automorphism $\phi\in \Aut(\cX)$ (note that this is really a group in our setting and not a possibly more complicated categorical object as for general stacks) gives the pull-back autoequivalence $\phi^*\colon\Coh(\cX)\to \Coh(\cX)$. Furthermore, every line bundle $\cL\in\Pic(\cX)$ induces the autoequivalence $\MM_\cL:=(\_)\otimes \cL\colon \Coh(\cX)\to \Coh(\cX)$. 

We set $A(\cX):=\Pic(\cX)\rtimes \Aut(\cX)$. 
The product structure in $A(\cX)$ is given by $(\cL,\phi)\cdot(\cK,\psi)=(\psi^*\cL\otimes\cK,\phi\circ \psi)$.

\begin{definition}
There is an automorphism $c$ of $A(\cX)$ defined as follows. If $h=(\cL,\phi)\in A(\cX)$, then $c(h):=\bar h:=(\cL^{-1},\phi)$. We say that a subgroup $H\subset A(\cX)$ is \textit{$c$-invariant} if $\bar H=H$. 
Furthermore, for $h=(\cL, \phi)$, we set $p(h):=(\reg,\phi)$.  
\end{definition}

\begin{remark}\label{reverse}
For $\cL\in \Pic(\cX)$ the FM kernel of $\MM_\cL$ is given by the pushforward along the diagonal $\Delta_*\cL\in \Db(\cX\times \cX)$ and for $\phi\in \Aut(\cX)$ the FM kernel of the pushforward $\phi_*$ is the structure sheaf of the graph $\reg_{\Gamma_\phi}\in \Db(\cX\times \cX)$. Note that the FM tranform in the opposite direction $\FM^{\cX\gets \cX}_{\Delta_*\cL}$ is again $\MM_\cL$, while $\FM_{\reg_{\Gamma_\phi}}^{\cX\gets \cX}$ is given by $\phi^*$, the inverse of $\phi_*$. This different behaviour can be seen as the reason for the occurrence of the automorphism $c$.
\end{remark}
 
In the following we will consider finite $c$-invariant subgroups $H\subset A(\cX)$ and assume that they \textit{act categorically} on $\Coh(\cX)$. With this we mean that there is a categorical action of $H$ on $\Coh(\cX)$ such that $h^*=\MM_{\cL}\circ \phi^*$ for $h=(\cL,\phi)\in H$. Clearly, any $H\subset\Aut(\cX)$ acts categorically, since the isomorphisms $\phi^*\circ \psi^*=(\psi\circ \phi)^*$ satisfy cocycle conditions. One can also check that every $H\subset\Pic(\cX)$ acts categorically; compare \cite{Elagin-equiv} and note that the proof given there works for line bundles on smooth projective stacks as well.

In the applications we will always have either $H\subset \Aut(\cX)$ or $H\subset \Pic(\cX)$. The main reason that we nevertheless choose to work with general $c$-invariant subgroups acting categorically is to avoid dealing with two cases. 

Given a $c$-invariant subgroup $H\subset A(\cX)$ acting categorically, we denote by $\Coh_H(\cX):=\Coh(\cX)^H$ the $H$-equivariant category. Its objects are sheaves  $E\in \Coh(\cX)$ together with a linearisation $\lambda$ consisting of isomorphisms $\lambda_h\colon E\xrightarrow \cong h^*E=\cL\otimes \phi^*E$ for $h=(\cL,\phi)\in H$.  
\subsection{Fourier-Mukai transforms of $\mu$-type} 
Let $\cX$ and $\cY$ be smooth projective stacks and $H\subset A(\cX)$ and $H'\subset A(\cY)$ be finite subgroups.
There is an induced $H\times H'$-action on $\Db(\cX\times \cY)$ given for
$h\in H$ and $h'\in H'$ by $(h\times h')^*:=\MM_{\cL\boxtimes \cL'}\circ (\phi\times \phi')^*$.

\begin{definition}\label{mu-linearised}
A group isomorphism $\mu\colon H\to H'$ is called a \emph{$c$-isomorphism} if $c\circ\mu=\mu\circ c$. Note that if $H\subset \Aut(\cX)$ and $H'\subset \Aut(\cY)$ or $H\subset \Pic(\cX)$ and $H'\subset \Pic(\cY)$ every isomorphism $\mu\colon H\to H'$ is a $c$-isomorphism. 

Given such a $c$-isomorphism, we let $h\in H$ act on $\Coh(\cX\times \cY)$ by $(\bar h\times \mu(h))^*$. We denote the category of sheaves linearised with respect to this $H$-action by $\Coh_\mu(\cX\times \cY)$ and its derived category by $\Db_\mu(\cX\times \cY)$. The objects in these categories are called \textit{$\mu$-linearised}. 
\end{definition}

For $\cP=(P,\nu)\in \Db_\mu(\cX\times \cY)$ there is an associated \textit{Fourier--Mukai transform of $\mu$-type} given by 
\[\FM^\mu_{\cP}:=\pr_{\cY*}\left(\pr_\cX^*(\_)\otimes \cP   \right)\colon \Db_H(\cX)\to \Db_{H'}(\cY).\]  
For $\cE=(E,\lambda)\in \Db_{H}(\cX)$, and $h'\in H'$ there are the isomorphisms 
\[
\alpha_{h'}:=\pr_\cX^*\lambda_{\mu^{-1}(h')}\otimes \nu_{\mu^{-1}(h')}\colon \pr_\cX^*E\otimes P\xrightarrow\cong (p\circ\mu^{-1}(h')\times h')^*(\pr_\cX^*E\otimes P).
\]
Pushing forward by $\pr_{\cY*}$ gives the $H'$-linearisation of $\pr_{\cY*}(\cP\otimes \pr_{\cX}^*\cE)=\FM^\mu_{\cP}(\cE)$. 

The sheaf $\reg_{\Delta\cX}\in \Db(\cX\times \cX)$ has a canonical $\id_H$-linearisation so that the identity $\id\colon \Db_H(\cX)\to \Db_H(\cX)$ is a Fourier-Mukai transform of $\id_H$-type. We denote $\reg_{\Delta\cX}$ equipped with this linearisation by $\regcan_{\Delta\cX}$. Here, one can see the need to let $h$ act by
$(\bar h\times h)^*$ and not by $(h\times h)^*$. More generally, for a character $\rho\in \hat H$, the tensor product $\MM_\rho\colon \Db_H(\cX)\to \Db_H(\cX)$ is the FM transform of $\id_H$-type with kernel $\regcan_{\Delta\cX}\otimes \rho$.

Let $\mu'\colon H'\rightarrow H''$ be another $c$-isomorphism with $H''\subset A(\cZ)$ for a third smooth projective stack $\cZ$. Then for $\cP'\in \Db_{\mu'}(\cY\times\cZ)$ the composition $\FM^{\mu'}_{\cP'}\circ \FM^\mu_\cP$ is a Fourier-Mukai transform of $\mu'\circ \mu$-type with kernel given by the convolution product
\begin{align}\label{muconv}\cP'\star\cP=\pr_{\cX\cZ*}(\pr_{\cX\cY}^*\cP\otimes \pr_{\cY\cZ}^*\cP')\in \Db_{\mu'\circ \mu}(\cX\times \cZ)\,.\end{align}

\begin{definition}\label{equiliftdef}
Let $F\colon \Db(\cX)\to \Db(\cY)$ and $\widetilde F\colon \Db_H(\cX)\to \Db_{H'}(\cY)$ be exact functors. Then $\widetilde F$ is called a \textit{lift} of $F$ if the following two conditions hold
\begin{align*}
F\circ \Res&\cong \Res\circ \widetilde F\colon \Db_H(\cX)\to \Db(\cY),\\
\Inf\circ F&\cong \widetilde F\circ \Inf\colon \Db(\cX)\to\Db_{H'}(\cY). 
\end{align*}
\end{definition}

\begin{definition}\label{beta-equivariant}
Let $\beta\colon \hat H'\to\hat H$ be an isomorphism of groups. A lift $\widetilde F\colon \Db_H(\cX)\to \Db_{H'}(\cY)$ of  $F\colon \Db(\cX)\to \Db(\cY)$ is called \textit{$\beta$-equivariant} if $\widetilde F\circ \MM_{\beta(\rho')}\cong \MM_{\rho'}\circ \widetilde F$ for all $\rho'\in\hat H'$. Here, $\MM_{\rho'}$ denotes the tensor product by the character in the equivariant category. 
\end{definition}     
By the very construction of the FM transforms of $\mu$-type one gets the following
\begin{lemma}\label{liftlem}
Let $\cP=(P,\nu)\in \Db_\mu(\cX\times \cY)$ for some isomorphism $\mu\colon H\to H'$. Then $\widetilde F=\FM^\mu_\cP\colon \Db_H(\cX)\to \Db_{H'}(\cY)$ is a $\hat \mu$-equivariant lift of $F=\FM_P\colon\Db(\cX)\to \Db(\cY)$.\qqed 
\end{lemma}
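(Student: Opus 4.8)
The plan is to verify the three properties that make up the assertion: the two compatibility isomorphisms of Definition \ref{equiliftdef} and the $\hat\mu$-equivariance of Definition \ref{beta-equivariant}, where $\hat\mu\colon\hat H'\to\hat H$ denotes the dual isomorphism $\rho'\mapsto\rho'\circ\mu$. The restriction compatibility $F\circ\Res\cong\Res\circ\widetilde F$ is essentially built into the construction: for $\cE=(E,\lambda)\in\Db_H(\cX)$ the object $\FM^\mu_\cP(\cE)$ is by definition $\pr_{\cY*}(\pr_\cX^*E\otimes P)=\FM_P(E)$ carrying the $H'$-linearisation obtained by pushing forward the isomorphisms $\alpha_{h'}$, so forgetting this linearisation returns $\FM_P(\Res\cE)=F(\Res\cE)$ functorially.

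For the inflation compatibility $\Inf\circ F\cong\widetilde F\circ\Inf$ I would compute on underlying objects and then match linearisations. The underlying object of $\Inf E$ is $\bigoplus_{g\in H}g^*E$, so the heart of the matter is the identity $\FM_P(g^*E)\cong\mu(g)^*\FM_P(E)$. Writing $g=(\cL,\phi)$ and $\mu(g)=(\cL',\phi')$, one substitutes $P\cong(\cL^{-1}\boxtimes\cL')\otimes(\phi\times\phi')^*P$ via the $\mu$-linearisation isomorphism $\nu_g$, cancels the $\cX$-factor $\cL$, pulls $(\phi\times\phi')^*$ out of the pushforward by base change, and extracts $\cL'$ by the projection formula. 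Summing over $g$ and reindexing along the bijection $\mu\colon H\to H'$ yields $\bigoplus_{g\in H}\FM_P(g^*E)\cong\bigoplus_{h'\in H'}h'^*\FM_P(E)$, whose underlying object is that of $\Inf(FE)$; compatibility of these identifications with the permutation linearisations then follows from the cocycle condition on $\nu$.

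The $\hat\mu$-equivariance I would deduce formally from the composition law \eqref{muconv} rather than by direct inspection. Since $\MM_{\hat\mu(\rho')}$ and $\MM_{\rho'}$ are the Fourier--Mukai transforms of identity type with kernels $\regcan_{\Delta\cX}\otimes\hat\mu(\rho')$ and $\regcan_{\Delta\cY}\otimes\rho'$, composing with $\FM^\mu_\cP$ and invoking \eqref{muconv} reduces the claim $\widetilde F\circ\MM_{\hat\mu(\rho')}\cong\MM_{\rho'}\circ\widetilde F$ to an isomorphism of kernels $\cP\star(\regcan_{\Delta\cX}\otimes\hat\mu(\rho'))\cong(\regcan_{\Delta\cY}\otimes\rho')\star\cP$ in $\Db_\mu(\cX\times\cY)$. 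Both sides have underlying object $P$, because convolution with the structure sheaf of a diagonal is the identity, so only the $\mu$-linearisations must be compared: on the left the twist by $\hat\mu(\rho')$ multiplies the $h$-component $\nu_h$ by the scalar $\hat\mu(\rho')(h)=\rho'(\mu(h))$, and on the right the twist by $\rho'$ on the $\cY$-side multiplies $\nu_h$ by $\rho'(\mu(h))$. These scalars coincide by definition of $\hat\mu$, so the two linearisations agree and the kernels are isomorphic.

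The base-change and projection-formula manipulations are routine; the genuine work, and the step I expect to be most delicate, is tracking the linearisations through the pushforward in the inflation identity and through the two convolutions in the equivariance step, making sure the distinction between $\bar h$ and $h$ and the passage $H\to H'$ via $\mu$ remain consistent throughout. It is precisely the conjugation $c$ entering the definition of $\Db_\mu(\cX\times\cY)$ that makes these scalars and permutations line up, as anticipated by Remark \ref{reverse}.
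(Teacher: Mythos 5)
Your proposal is correct and takes essentially the same route as the paper: the paper offers no written proof at all, asserting the lemma ``by the very construction'' of Fourier--Mukai transforms of $\mu$-type, and your three verifications (restriction compatibility from the definition of $\FM^\mu_\cP$, inflation compatibility from $\FM_P(h^*E)\cong \mu(h)^*\FM_P(E)$ via $\nu$, and $\hat\mu$-equivariance via the convolution formula for kernels of the form $\regcan_{\Delta\cX}\otimes\rho$) are precisely the unwinding of that construction, with the role of $\bar h$ versus $h$ handled correctly.
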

Before we state our next result, which is a generalisation of \cite[Lem.\ 5]{Ploog-equiv}, recall that every $P\in \Db(\cX\times \cY)$ has a right and a left adjoint kernel given by $P^L:=P^\vee\otimes \pr_{\cY}^*\omega_\cY[\dim\cY]$ and $P^R:=P^\vee\otimes \pr_{\cX}^*\omega_\cX[\dim\cX]$; see \cite{CaWi}.  

\begin{prop}\label{equilift}
Let $\cP=(P,\nu)\in \Db_\mu(\cX\times \cY)$ for some isomorphism $\mu\colon H\to H'$, $F=\FM_P$, and $\widetilde F=\FM^\mu_\cP$. Then 
\begin{enumerate}
 \item $F$ is fully faithful $\Longrightarrow$ $\widetilde F$ is fully faithful.
\item $F$ is an equivalence $\Longrightarrow$ $\widetilde F$ is an equivalence.
\end{enumerate}
\end{prop}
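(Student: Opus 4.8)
The plan is to realise the adjoints of $\widetilde F$ as Fourier--Mukai transforms of $\mu^{-1}$-type and then deduce both assertions from the corresponding non-equivariant facts for $F$ using that the forgetful functor reflects isomorphisms. Both full faithfulness and the equivalence property are detected by the unit and counit of an adjunction, so once the adjunction is set up on the equivariant level in a way compatible with $\Res$, the statements become formal.

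First I would equip the adjoint kernels $P^R=P^\vee\otimes\pr_\cX^*\omega_\cX[\dim\cX]$ and $P^L=P^\vee\otimes\pr_\cY^*\omega_\cY[\dim\cY]$ with linearisations making them objects $\cP^R=(P^R,\nu^R)$ and $\cP^L=(P^L,\nu^L)$ of $\Db_{\mu^{-1}}(\cY\times\cX)$. These are produced from $\nu$ because dualising, shifting, and tensoring by pullbacks of line bundles are functorial; the only genuine point is that dualising a $\mu$-linearisation yields a $\mu^{-1}$-linearisation. This is exactly where the hypotheses are used: the passage $\mu\rightsquigarrow\mu^{-1}$ together with the inversion of the line-bundle part is the phenomenon recorded in Remark~\ref{reverse}, and it is consistent precisely because $H$ and $H'$ are $c$-invariant and $\mu$ is a $c$-isomorphism. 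Carrying out this bookkeeping is the main obstacle of the proof; everything afterwards is formal.

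Next I would check that these kernels give the adjoints in the equivariant setting. By the convolution formula \eqref{muconv} one has $\FM^{\mu^{-1}}_{\cP^R}\circ\widetilde F\cong\FM^{\id_H}_{\cP^R\star\cP}$ and $\widetilde F\circ\FM^{\mu^{-1}}_{\cP^R}\cong\FM^{\id_{H'}}_{\cP\star\cP^R}$. The unit and counit of the non-equivariant adjunction $F\dashv R$ are induced by canonical morphisms of kernels $\reg_{\Delta\cX}\to P^R\star P$ and $P\star P^R\to\reg_{\Delta\cY}$; I would verify that these are in fact morphisms of linearised objects $\regcan_{\Delta\cX}\to\cP^R\star\cP$ and $\cP\star\cP^R\to\regcan_{\Delta\cY}$, so that they define an equivariant unit and counit exhibiting $\widetilde F\dashv\FM^{\mu^{-1}}_{\cP^R}$. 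The analogous statement on the other side gives $\FM^{\mu^{-1}}_{\cP^L}\dashv\widetilde F$.

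With the adjunction in place the two assertions follow. For (i), $F$ is fully faithful if and only if the unit $\reg_{\Delta\cX}\to P^R\star P$ is an isomorphism. The forgetful functor $\Res\colon\Db_{\id_H}(\cX\times\cX)\to\Db(\cX\times\cX)$ is conservative and, by the construction above, sends the equivariant unit to this morphism; hence the equivariant unit $\regcan_{\Delta\cX}\to\cP^R\star\cP$ is an isomorphism as well, that is, $\id_{\Db_H(\cX)}\to\FM^{\mu^{-1}}_{\cP^R}\circ\widetilde F$ is an isomorphism, so $\widetilde F$ is fully faithful. For (ii), if $F$ is an equivalence then both its unit and counit are isomorphisms, equivalently both $F$ and its adjoint $R=\FM_{P^R}$ are fully faithful; applying (i) to $\widetilde F$ and to the $\mu^{-1}$-type lift $\FM^{\mu^{-1}}_{\cP^R}$ of $R$ shows that the equivariant unit and counit are both isomorphisms, whence $\widetilde F$ is an equivalence with quasi-inverse $\FM^{\mu^{-1}}_{\cP^R}$.
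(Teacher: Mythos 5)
There is a genuine gap at the central step of your plan: the claim that the canonical unit $\eta\colon \reg_{\Delta\cX}\to P^R\star P$ and counit $\eps\colon P\star P^R\to\reg_{\Delta\cY}$ are ``in fact morphisms of linearised objects'' is not a routine verification, and for the linearisation of $P^R$ obtained by dualising $\nu$ it is false in general. Given any $\mu^{-1}$-linearisation $\nu'$ of $P^R$, all one knows is that $H$ acts on $\Hom(\reg_{\Delta\cX},P^R\star P)$ (with respect to $\regcan_{\Delta\cX}$ and $\nu'\star\nu$) through some character $\rho\in\hat H$, so $\eta$ commutes with the linearisations only if $\rho$ happens to be trivial; by Lemma \ref{simple} the linearisations of a simple kernel form a free $\hat H$-orbit, and there is no reason the dual linearisation is the ``correct'' point of that orbit. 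The repair requires two ideas absent from your write-up: first, since $F$ is fully faithful, $P$ is simple by (\ref{RFformula}), so $\Hom(\reg_{\Delta\cX},P^R\star P)=\C$ and one may replace $\nu'$ by $\nu'\otimes\rho$ to force equivariance of $\eta$; second, even after this normalisation $\eps$ need not be equivariant and must be replaced by its average $\frac{1}{|H'|}\sum_{h'\in H'}h'\cdot\eps$, after which the triangle identities have to be rechecked (they survive precisely because $\eta$ and $\id_P$ are now equivariant). This is exactly the content of the paper's Lemma \ref{liftadj}, which is proved \emph{after} the proposition and is not available to it; without it your adjunction, and hence both of your conservativity arguments, has no foundation.

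It is also worth knowing that the paper's own proof of this proposition deliberately sidesteps lifting the adjunction. It only uses that the convolution (\ref{muconv}) commutes with $\Res$: if $F$ is fully faithful, then $\Res(\cQ\star\cP)=P^R\star P\cong\reg_{\Delta\cX}$, where $\cQ=(P^R,\nu^R)$ carries the dualised linearisation; since $\reg_{\Delta\cX}$ is simple, Lemma \ref{simple} forces $\cQ\star\cP\cong\regcan_{\Delta\cX}\otimes\rho$ for some $\rho\in\hat H$, whence $\FM^{\mu^{-1}}_{\cQ}\circ\widetilde F\cong\MM_\rho$ is an equivalence and $\widetilde F$ is fully faithful. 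When $F$ is an equivalence, the same argument applied to $\cP\star\cQ$ and $\reg_{\Delta\cY}$ shows that $\widetilde F\circ\FM^{\mu^{-1}}_{\cQ}$ is an equivalence as well, and a functor admitting equivalences as both one-sided compositions is itself an equivalence. So where you try to make the canonical unit and counit equivariant on the nose, the paper accepts the character ambiguity and absorbs it into $\MM_\rho$; that softness is what lets it avoid the twist-and-average argument entirely.
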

\begin{proof}
The right adjoint kernel $P^R=P^\vee\otimes \pr_\cX^*\omega_\cX[\dim \cX]\in \Db(\cY\times \cX)$ has an induced $\mu^{-1}$-linearisation $\widetilde \nu$ given by 
$
\widetilde \nu_{\mu(h)}=(\nu_h^\vee)^{-1}\otimes \pr_{\cX}^*\lambda_{p(h)}[\dim\cX]
$
where $\lambda$ is the natural $p(H)$-linearisation of the canonical bundle given by the pushforward of $\dim\cX$-forms. We denote $P^R$ equipped with this linearisation by $\cQ\in \Db_{\mu^{-1}}(\cY\times \cX)$. The convolution product (\ref{muconv}) is compatible with the restriction functor. So if $F$ is fully faithful, we have $\Res(\cQ\star\cP)=P^R\star P=\reg_{\Delta\cX}$. 
We have $\Hom(\reg_{\Delta\cX}, \reg_{\Delta\cX})\cong \Gamma(\cX,\reg_\cX)=\C$ which means that $\reg_{\Delta\cX}$ is simple.
Thus, $\cQ\star\cP=\regcan_{\Delta\cX}\otimes \rho$ for some $\rho\in \hat H$ by Lemma \ref{simple}. Hence, 
$\FM^{\mu^{-1}}_{\cP^R}\circ \widetilde F=\MM_\rho$ is an equivalence which proves that $\widetilde F$ is fully faithful. If $F$ is an equivalence, we get similarly that $\widetilde F\circ \FM^{\mu^{-1}}_{\cP^R}$ is an equivalence, too. It follows that $\widetilde F$ is an equivalence. 
\end{proof}

\subsection{Monoidal and $X$-linear autoequivalences}

The reason that the calculus of Fourier-Mukai transforms of $\mu$-type works for subgroups of $A(\cX)$ is that the pushforwards along automorphisms are monoidal, i.e.\ $F(A\otimes B)\cong F(A)\otimes F(B)$ and $F(\reg_X)=\reg_X$ for $F=\phi_*$, and tensor products by line bundles are $\cX$-linear, i.e. $G(A\otimes B)=G(A)\otimes B$ for $G=\MM_\cL$. 

If we consider varieties for simplicity, these are actually the only monoidal and linear autoequivalences:

\begin{prop}
 Let $X$ and $Y$ be smooth projective varieties.
 \begin{enumerate}
  \item Every $X$-linear autoequivalence of $\Db(X)$ is of the form $\MM_L[m]$ for some $L\in \Pic X$ and $m\in \Z$.
  \item Every monoidal equivalence $\Db(X)\cong \Db(Y)$ is of the form $\phi_*$ for some isomorphism $\phi\colon X\to Y$.
 \end{enumerate}
\end{prop}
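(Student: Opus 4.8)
The plan is to handle the two parts by different mechanisms: part (i) follows from the tautology that linearity pins the functor down to one object, while part (ii) rests on reconstructing $X$ from the tensor structure.

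For (i), write $E:=G(\reg_X)$. Applying $X$-linearity $G(A\otimes B)\cong G(A)\otimes B$ with $A=\reg_X$ gives $G(B)\cong E\otimes B$ for all $B$, so $G\cong\MM_E$. Since $G$ is an autoequivalence, $\MM_E$ is invertible, i.e.\ $E$ is an invertible object of $(\Db(X),\otimes)$: there is $E'$ with $E\otimes E'\cong\reg_X$. It then remains to identify the invertible objects with the shifted line bundles, and for this I would pass to derived fibres. Every object is automatically perfect as $X$ is smooth, and for each closed point $x$ one has $Li_x^*E\otimes_\C Li_x^*E'\cong Li_x^*\reg_X\cong\C$; by the K\"unneth formula over $\C$ each $Li_x^*E$ is one-dimensional and concentrated in a single degree $-m(x)$. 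Upper semicontinuity of the fibre dimensions of the cohomology sheaves together with the connectedness of $X$ forces $m(x)$ to be a constant $m$, and a perfect complex all of whose fibres are one-dimensional in a fixed degree is a shifted line bundle. Hence $E\cong L[m]$ and $G\cong\MM_L[m]$.

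For (ii), the main input is Balmer's reconstruction theorem: the tensor triangulated category $(\Db(X),\otimes,\reg_X)$ determines $X$ as a scheme through the spectrum $\Spc(\Db(X))\cong X$, functorially in monoidal exact functors. A monoidal equivalence $\Phi\colon\Db(X)\to\Db(Y)$ thus induces an isomorphism of schemes, from which I extract an isomorphism $\phi\colon X\to Y$. Composing with the pullback $\phi^*$ (itself monoidal and inducing the inverse map on spectra) reduces to the claim that a monoidal autoequivalence $\Psi=\phi^*\circ\Phi$ of $\Db(X)$ with $\Spc(\Psi)=\id$ is isomorphic to the identity; granting this, $\Phi\cong\phi_*\circ\Psi\cong\phi_*$. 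To prove the claim I would first show $\Psi$ fixes skyscrapers: since $\Spc(\Psi)=\id$ preserves supports, $\Psi(\reg_x)$ is supported at $x$, and as $\Psi(\reg_X)=\reg_X$ and $\Psi$ is fully faithful, $\dim_\C\Hom(\reg_X,\Psi(\reg_x)[i])=\dim_\C\Hom(\reg_X,\reg_x[i])=\delta_{i,0}$; because these numbers compute $\dim_\C\mathcal H^i$ of a point-supported complex, this forces $\Psi(\reg_x)\cong\reg_x$ with no shift. The same computation applied to an arbitrary point-supported $F$ gives $\dim_\C\mathcal H^i(\Psi F)=\dim_\C\mathcal H^i(F)$ for all $i$; combined with $Li_x^*\Psi(A)=\Psi(A)\otimes\reg_x\cong\Psi(A\otimes\reg_x)=\Psi(Li_x^*A)$ and the fact that the cohomological amplitude of a perfect complex on the smooth variety $X$ is detected by its derived fibres, this shows $\Psi$ preserves the standard t-structure and hence restricts to an exact monoidal autoequivalence of $\Coh(X)$ fixing every skyscraper. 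Finally, the exact autoequivalences of $\Coh(X)$ are generated by twists by line bundles and automorphisms of $X$; fixing all skyscrapers kills the automorphism, and monoidality forces the twisting line bundle to be trivial, so $\Psi|_{\Coh(X)}\cong\id$ and therefore $\Psi\cong\id$.

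I expect the crux to be precisely this last claim. Reconstruction recovers the variety almost for free, but upgrading the isomorphism of spectra to an honest isomorphism of functors $\Phi\cong\phi_*$—in particular eliminating the a priori shift ambiguity in $\Psi(\reg_x)$ and descending the whole comparison to the abelian category $\Coh(X)$—is where the genuine work lies.
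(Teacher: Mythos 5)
Your part~(i) is correct, but follows a genuinely different route from the paper's. The paper evaluates $G$ on skyscraper sheaves: $X$-linearity gives $G(k(x))\otimes k(x')\cong G(k(x)\otimes k(x'))\cong 0$ for $x'\neq x$, so $\supp G(k(x))=\{x\}$, point-likeness then gives $G(k(x))\cong k(x)[m]$, and the criterion of \cite[Cor.\ 5.23 \& 6.14]{Huy-book} (an equivalence sending skyscrapers to shifted skyscrapers is a standard autoequivalence) yields $G\cong \MM_L[m]$. You instead evaluate on the unit, obtaining $G\cong\MM_E$ with $E=G(\reg_X)$ an invertible object, and then classify invertible objects of $(\Db(X),\otimes)$ as shifted line bundles via derived fibres, semicontinuity and connectedness. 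This is valid and arguably more self-contained, since it avoids the point-like machinery; the one caveat is that the step $G\cong\MM_E$ requires the linearity isomorphism to be natural in the second argument, whereas the paper's route only needs objectwise data (the Fourier--Mukai criterion upgrades objectwise information about skyscrapers to an isomorphism of functors).

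Your part~(ii), however, has a genuine gap at its final step. The paper proves (ii) directly (monoidality forces $\supp F(k(x))$ to be a single point, whence $F(k(x))\cong k(y)[m]$, the same criterion gives $F\cong\MM_L\circ\phi_*[m]$, and $F(\reg_X)\cong\reg_Y$ kills $L$ and $m$) and only afterwards remarks that this yields reconstruction, citing \cite{BalSpec} for the general statement; you invert this and take Balmer's theorem as input, which is legitimate. The problem is the inference ``$\Psi$ is t-exact and $\Psi|_{\Coh(X)}\cong\id$, therefore $\Psi\cong\id$.'' An exact autoequivalence of $\Db(X)$ is \emph{not} determined by its restriction to the heart: that restriction controls the action on sheaves and on $\Hom$'s between them, but not on higher $\Ext$'s, i.e.\ not on the connecting morphisms; one cannot reconstruct $\Psi(A)$ for a general complex $A$ by induction on cohomological length, because $\Psi$ may move the gluing data $\delta\in\Hom(\tau_{>a}A,\mathcal{H}^a(A)[1-a])$, and non-conjugate gluing data produce non-isomorphic cones. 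The standard way to close this is Orlov representability plus precisely \cite[Cor.\ 5.23 \& 6.14]{Huy-book}: $\Psi$ is a Fourier--Mukai equivalence fixing every skyscraper objectwise, hence $\Psi\cong\MM_L\circ\phi_*[m]$ with $\phi=\id$, $m=0$, and $L\cong\reg_X$ because $\Psi(\reg_X)\cong\reg_X$.

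Note what this does to the architecture of your argument: once that criterion is invoked, your steps on t-structure preservation and Gabriel's theorem for $\Coh(X)$ become redundant, since you had already established $\Psi(k(x))\cong k(x)$ for all $x$ by the $\Hom(\reg_X,-[i])$ computation; and if the criterion is \emph{not} invoked, the last line is unproved. So either you import the paper's key tool anyway, making Balmer's and Gabriel's theorems unnecessary detours, or the proof is incomplete. You correctly identified this last claim as the crux, but as written the crux is asserted, not proved.
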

\begin{proof}
Let $G\in \Aut(\Db(X))$ be $X$-linear, $x\in X$, and $E:=G(k(x))$. For $x\neq x'\in X$ we have
$E\otimes k(x')\cong G(k(x)\otimes k(x'))\cong 0$. Thus, $\supp E=\{x\}$; see \cite[Ex.\ 3.30]{Huy-book}. Since $E$ is point-like, we have $E=k(x)[m]$ for some $m\in \Z$; see \cite[Lem.\ 4.5]{Huy-book}. It follows that $G=\MM_L[m]$ for some line bundle $L\in \Pic X$; see \cite[Cor.\ 5.23 \& 6.14]{Huy-book}. 

Let $F\colon \Db(X)\to \Db(Y)$ be a monoidal equivalence, $x\in X$, and $E:=F(k(x))$. For every $A\in \Db(X)$ we have $k(x)\otimes A\cong k(x)\otimes_\C V$ for some graded vector space $V\in \Db(\C)$ (namely $V=i_x^*E$ where $i_x$ is the embedding of the point). It follows for every $B\in \Db(Y)$ that
\[
 E\otimes B\cong F(k(x)\otimes F^{-1}(B))\cong F(k(x)\otimes_\C V)\cong E\otimes_\C V 
\]
for some $V\in \Db(\C)$. 
In particular, for every $B\in \Db(Y)$ we have $\supp(E\otimes B)=\supp E$ or $\supp(E\otimes B)=0$.
Since $\supp(E\otimes k(y))\subset \{y\}$ for $y\in Y$, the support of $E$ consist of a single point. Again, since $E$ is point-wise, $E=k(y)[m]$ for some $y\in Y$ and $m\in \Z$. It follows again by \cite[Cor.\ 5.23 \& 6.14]{Huy-book} that $F=\MM_L\circ \phi_*[m]$ for some $L\in \Pic X$ and some isomorphism $\phi\colon X\to Y$. Since $F$ is monoidal, $F(\reg_X)\cong \reg_Y$. Thus, $L=\reg_Y$ and $m=0$.
\end{proof}
\begin{remark}
 Part (ii) of the above Proposition gives a quick proof that the derived category of a smooth projective variety together with its monoidal structure determines the variety (up to isomorphism); see \cite{BalSpec} for a constructive proof and a more general statement.
\end{remark}

\subsection{Lifts of adjunctions}
\begin{definition}
Let $\cP\in \Db_{\mu}(\cX\times \cY)$ and $\cQ\in \Db_{\mu^{-1}}(\cY\times \cX)$. Then $\cQ$ is said to be a \textit{right adjoint kernel} of $\cP$ (or, equivalently, $\cP$ is a \textit{left adjoint kernel of $\cQ$}), in short $\cP\dashv\cQ$, if there are morphisms $\eta\colon \regcan_{\Delta\cX}\to \cQ\star \cP$ and $\eps\colon \cP\star \cQ\to \regcan_{\Delta\cY}$ with $(\eps\star \id_{\cP})\circ(\id_\cP\star \eta)=\id_\cP$ and $(\id_{\cQ}\star \eps)\circ(\eta\star \id_{\cQ})=\id_{\cQ}$. 
The morphisms $\eta$ and $\eps$ are called the \textit{unit} and \textit{counit} of the adjunction. 
\end{definition}

Clearly, an adjunction of kernels induces an adjunction between the associated Fourier-Mukai transforms $\FM^{\mu}_\cP\dashv \FM^{\mu^{-1}}_\cQ$.    

Let $\cZ$ be a third smooth projective stack, $H''\subset A(\cZ)$, and $\mu'\colon H''\cong H$ a $c$-isomorphism. Let $\cP\dashv\cQ$ be as in the definition. For $\cE\in \Db_{\mu'}(\cZ\times \cX)$ and $\cF\in \Db_{\mu\circ \mu'}(\cZ\times \cY)$ the unit and counit of the adjunction $\cP\dashv \cQ$ induce an isomorphism 
\begin{align}\label{adjointformula}
\Hom_{\Db_{\mu\circ \mu'}(\cZ\times \cY)}(\cP\star \cE,\cF)\cong \Hom_{\Db_{\mu'}(\cZ\times \cX)}(\cE,\cQ\star \cF)\,.
\end{align}
In particular, there is the formula
\begin{align}\label{RFformula}
\Hom_{\Db_{\mu}(\cX\times \cY)}(\cP,\cP)\cong \Hom_{\Db_{\id_H}(\cX\times \cX)}(\regcan_{\Delta\cX},\cQ\star \cP)\,.
\end{align}

\begin{lemma}\label{liftadj}
Let $P\in \Db(\cX\times\cY)$ be simple.
Then for every $\mu$-linearisation $\nu$ of $P$, there exist $\mu^{-1}$-linearisations $\nu^L$ and $\nu^R$ of $P^L$ and $P^R$ such that $\cP^L:=(P^L,\nu^L)\dashv \cP:=(P,\nu)\dashv\cP^R:=(P^R,\nu^R)$. 
\end{lemma}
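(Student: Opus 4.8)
The plan is to upgrade the non-equivariant adjunctions $P^L\dashv P\dashv P^R$ provided by \cite{CaWi} to the $\mu$-equivariant level: I equip $P^L$ and $P^R$ with explicit $\mu^{-1}$-linearisations and then verify that the canonical unit and counit of these adjunctions are morphisms in the equivariant categories and still satisfy the triangle identities.

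For $P^R$ I take the $\mu^{-1}$-linearisation $\nu^R$ already written down in the proof of Proposition \ref{equilift}, and for $P^L$ the symmetric one, namely
\[
\nu^R_{\mu(h)}:=(\nu_h^\vee)^{-1}\otimes \pr_\cX^*\lambda_{p(h)}[\dim\cX],\qquad
\nu^L_{\mu(h)}:=(\nu_h^\vee)^{-1}\otimes \pr_\cY^*\lambda'_{p(h)}[\dim\cY],
\]
where $\lambda$ and $\lambda'$ denote the canonical $p(H)$-linearisations of $\omega_\cX$ and $\omega_\cY$ coming from the pushforward of top-degree forms. That these satisfy the cocycle condition of a $\mu^{-1}$-linearisation is routine: it follows from $\nu$ being a linearisation, from the fact that $(\_)^\vee$ reverses the cocycle while inverting undoes this reversal, and from $\lambda,\lambda'$ being linearisations themselves.

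The heart of the argument is to show that the canonical unit $\eta\colon\reg_{\Delta\cX}\to P^R\star P$ and counit $\eps\colon P\star P^R\to\reg_{\Delta\cY}$ (and the analogous pair for $P^L$) are compatible with the linearisations, i.e.\ define morphisms $\regcan_{\Delta\cX}\to\cP^R\star\cP$ and $\cP\star\cP^R\to\regcan_{\Delta\cY}$ of $\id$-linearised objects. Here I would use that these morphisms are assembled from the evaluation $P\otimes P^\vee\to\reg$ and the Serre-duality trace, both of which are canonical and therefore intertwine the linearisation isomorphisms: the factor $(\nu_h^\vee)^{-1}$ in $\nu^R$ is exactly what renders the evaluation $H$-equivariant, while $\pr_\cX^*\lambda_{p(h)}[\dim\cX]$ absorbs the canonical bundle produced by the trace. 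The hypothesis that $P$ is simple enters precisely at this point and is the step I expect to be the main obstacle: since $\End(P)=\C$ the object $P$ is rigid and, $P^R$ and $\reg_{\Delta\cX}$ being simple as well, the $H$-action can move the canonical $\eta$ only through a single character; the normalisation built into $\nu^R$ is the one that kills this character, so that $\eta$ becomes genuinely invariant rather than merely invariant up to a scalar. Carrying this out amounts to tracking how Serre duality interacts with the given linearisation $\nu$.

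Finally, the triangle identities lift for free. The convolution $\star$ commutes with the forgetful functor $\Res$ (as already used in the proof of Proposition \ref{equilift}), and $\Res$ is faithful because by (\ref{invhom}) equivariant morphisms form a subspace of all morphisms. Hence applying $\Res$ to the two equivariant identities
\[
(\eps\star\id_\cP)\circ(\id_\cP\star\eta)=\id_\cP,\qquad
(\id_{\cP^R}\star\eps)\circ(\eta\star\id_{\cP^R})=\id_{\cP^R}
\]
returns precisely the non-equivariant triangle identities for $\eta$ and $\eps$, which hold by construction; faithfulness of $\Res$ then promotes these back to the equivariant statements. This yields $\cP\dashv\cP^R$, and the symmetric treatment of $P^L$ gives $\cP^L\dashv\cP$.
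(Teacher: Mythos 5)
Your overall skeleton --- lift the non-equivariant adjunction by exhibiting equivariant unit and counit, then deduce the triangle identities because equivariant morphisms form a subspace of all morphisms by (\ref{invhom}) --- is the same as the paper's, and your final step is indeed sound: once $\eta$ and $\eps$ are known to be morphisms in the equivariant categories, the identities can be checked after applying $\Res$. The gap sits exactly in the middle, where the actual content of the lemma lies. You fix the explicit linearisation $\nu^R_{\mu(h)}=(\nu_h^\vee)^{-1}\otimes\pr_\cX^*\lambda_{p(h)}[\dim\cX]$ from the proof of Proposition \ref{equilift} and assert that the canonical $\eta$ and $\eps$ intertwine these linearisations, adding that ``the normalisation built into $\nu^R$ is the one that kills this character.'' This is never proved, and it is not something one gets by inspection: by simplicity of $P$ and (\ref{RFformula}) the space $\Hom(\reg_{\Delta\cX},P^R\star P)$ is one-dimensional, so the induced $H$-action on it is given by \emph{some} character $\rho\in\hat H$, but nothing in your argument shows that $\rho$ is trivial for your particular choice of $\nu^R$. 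If it is not, $\eta$ is simply not equivariant for that linearisation and the whole construction collapses. Verifying triviality would amount to tracking Grothendieck--Serre duality through the group action --- precisely the ``main obstacle'' you name and then skip.

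The paper's proof is organised so that no such canonicity statement is ever needed. It starts from an \emph{arbitrary} $\mu^{-1}$-linearisation $\nu'$ of $P^R$, observes that the induced action on the one-dimensional space $\Hom(\reg_{\Delta\cX},P^R\star P)$ is by a character $\rho$, and then \emph{defines} $\nu^R:=\nu'\otimes\rho$; after this twist $\eta$ is invariant by construction, with no duality bookkeeping. For the counit it claims no equivariance at all: it replaces $\eps$ by the average $\widetilde\eps=\frac1{|H|}\sum_{h'\in H'}h'\cdot\eps$ and verifies the triangle identities directly, using only the invariance of $\eta$ and $\id_P$. (Your stronger claim that $\eps$ itself is equivariant is in fact true \emph{a posteriori}: $\Hom(P\star P^R,\reg_{\Delta\cY})\cong\End(P^R)\cong\C$ is also one-dimensional, so $h'\cdot\eps=\rho'(h')\eps$ for some character $\rho'$, and applying $h'$ to the triangle identity $(\eps\star\id_\cP)\circ(\id_\cP\star\eta)=\id_P$ forces $\rho'=1$ once $\eta$ is invariant --- but this little computation, or the paper's averaging, is exactly what is missing from your proposal.) To repair your proof, either carry out the duality computation you postpone, or replace ``the explicit $\nu^R$'' by ``a suitable character twist of it,'' which is the paper's route and uses the simplicity hypothesis in the place where it genuinely belongs.
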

\begin{proof}
Let $\eta\colon \reg_{\Delta\cX}\to P^R\star P$ and $\eps\colon P\star P^R\to \reg_{\Delta\cY}$ be the unit and counit of the adjunction $P\dashv P^R$ and let $\nu$ be a linearisation of $P$.
Note that $P$ being simple is equivalent to $\Hom(\reg_{\Delta\cX},P^R\star P)=\mathbb{C}$ by  (\ref{RFformula}). 
Furthermore, $P^R$ is always $\mu^{-1}$-linearisable; see the proof of Proposition \ref{equilift}.
Let $\nu'$ be any $\mu^{-1}$-linearisation of $P^R$. 

Consider the $H$-action on $\Hom(\reg_{\Delta\cX},P^R\star P)=\mathbb{C}$ induced by the canonical linearisation of $\reg_{\Delta\cX}$ and the linearisation $\nu'\star \nu$ of $P^R\star P$. 
The $H$-action is given by some character 
$\rho\in \hat H$. Thus, after replacing $\nu'$ by $\nu^R:=\nu'\otimes \rho$, the $H$-action on $\Hom(\reg_{\Delta\cX},P^R\star P)$ becomes trivial which allows us to set $\widetilde \eta:=\eta\colon \regcan_{\Delta\cX}\to \cP^R\star \cP$.
We also set $\widetilde \eps=\frac 1{|H|}\sum_{h'\in H'}h'\cdot \eps$ where $h'\cdot \eps$ is the image of $\eps$ under the $H'$-action on $\Hom(P\star P^R,\reg_{\Delta\cY})$ induced by the linearisation $\nu\star \nu^R$ and the canonical linearisation of $\reg_{\Delta\cY}$. The equivariance of $\eta$ and $\id_P$ yield
\begin{align*}(\widetilde\eps\star \id_{\cP})\circ(\id_\cP\star \widetilde\eta)&=\frac1{|H|}\sum_{h\in H}(\mu(h)\cdot \eps\star \id_{\cP})\circ(\id_\cP\star \eta)\\&=
\frac1{|H|}\sum_{h'\in H'}h'\cdot[(\eps\star \id_{\cP})\circ(\id_\cP\star \eta)]\\&=
\frac1{|H|}\sum_{h'\in H'}h'\cdot \id_P\\&=\id_\cP\,.\end{align*}
Similarly, we get $(\id_{\cP^R}\star \widetilde\eps)\circ(\widetilde\eta\star \id_{\cP^R})=\id_{\cP^R}$. The proof of the existence of an adjunction $\cP^L\dashv \cP$ is analogous.
\end{proof}
\subsection{Lifts of spherical and $\P^n$-functors}
\begin{definition}
An object $\cP\in \Db_\mu(\cX\times \cY)$ with left and right adjoints $\cP^L$ and $\cP^R$ is said to be a \textit{spherical kernel} if the cone $\cC=\cone(\eta)$ of the unit is the kernel of an equivalence, called the \textit{cotwist kernel} of $F=\FM^\mu_\cP$, and the composition
\begin{align}
 \phi\colon \cP^R\xrightarrow{\cP^R\star\eta_L}\cP^R\star \cP\star \cP^L\xrightarrow{\beta\star  \cP^L}\cC\star \cP^L\,,
\end{align}
(where $\eta_L$ is the counit of the adjunction $\cP^L\dashv \cP$, $\beta$ is from the triangle $\regcan_{\Delta\cX}\xrightarrow{\eta} \cP^R\star \cP\xrightarrow\beta \cC$ and we abuse notation by writing $\cP^R$ for $\id_{\cP^R}$ and similarly for $\cP^L$) is an isomorphism.
\end{definition}

If $\cP$ is spherical, the associated \textit{twist} $\cT=\cone(\eps)\in \Db_{\id_{H'}}(\cY\times \cY)$ given by the counit of the adjuction $\cP\dashv\cP^R$ is the kernel of an autoequivalence; see \cite{Rou}, \cite{Addington}, \cite{Anno-Logvinenko}.

Furthermore, following \cite{Addington} we have

\begin{definition} 
An object $\cP\in\Db_\mu(\cX\times \cY)$ is a \textit{$\P^n$-kernel} if the following three conditions hold:
\begin{enumerate}
 \item There is an isomorphism $\alpha\colon \cP^R\star \cP\cong \regcan_{\Delta\cX}\oplus \cD\oplus \cD^{\star 2}\oplus\ldots\oplus \cD^{\star n}$ for some autoequivalence kernel $\cD\in \Db_{\id_H}(\cX\times \cX)$ which is then called the \textit{$\P$-cotwist kernel}. The components of the isomorphism $\alpha$ are denoted by $\alpha_i\colon \cP^R\star \cP\to \cD^{\star i}$. 
\item The composition
\begin{align}
 \psi\colon \cP^R\xrightarrow{\cP^R\star\eta_L}\cP^R\star \cP\star \cP^L\xrightarrow{\alpha_n\star  \cP^L}\cD^{\star n}\star \cP^L\,,
\end{align}
is an isomorphism.
\item The compositions
\[c_{ij}\colon \cD\star \cD^{\star i-1}\xrightarrow{\alpha^{-1}_1\star \alpha^{-1}_{i-1}} \cP^R\star\cP\star \cP^R\star \cP\xrightarrow{\cP^R\star \eps\star \cP}\cP^R\star\cP\xrightarrow{\alpha_j}\cD^{\star j}\]
are isomorphisms for $i=j$ and zero for $i<j$ (and arbitrary for $i>j$).
\end{enumerate}
\end{definition}

For $\P^n$-kernels there also is an associated twist which is an autoequivalence of $\Db_{H'}(\cY)$; see \cite[Sect.\ 3.3]{Addington}. 
Clearly, the above definitions include the case that $H=H'=1$.
The Fourier-Mukai transforms associated to spherical and $\P^n$-kernels are called \textit{spherical} and \textit{$\P^n$-functors}, respectively.

In Proposition \ref{Plift} below we will assume that our spherical and $\P^n$-functors satisfy the conditions
\begin{align}\label{sphericalcond}
 &\Hom(\regcan_{\Delta\cX},\cC)=0\quad&\text{for $\cP$ a spherical kernel,}\\\label{Pcond}
 &\Hom(\regcan_{\Delta\cX},\cD^{\star i})=0\quad \forall\, 1\le i\le n\quad&\text{for $\cP$ a $\P^n$-kernel.}
\end{align}
These conditions are satisfied by all spherical and $\P^n$-kernels the authors are aware of. 
Note that every spherical kernel satisfying (\ref{sphericalcond}) as well as every $\P^n$-kernel satisfying (\ref{Pcond}) is simple by (\ref{RFformula}). Furthermore, under condition (\ref{Pcond}) it is automatic that $c_{ij}=0$ for $i<j$.

\begin{remark}
The above axiom (ii) appears to be slightly stronger than in \cite{Addington} where it is only required that there is any isomorphism $\cP^R\cong \cD^{\star n}\star \cP^L$ (and in the spherical case it is only required that there is any isomorphism $\cP^R\cong \cC\star \cP^L$).
However, if $\cP$ is simple and satisfies the condition
\begin{align}\label{sphericalcond2}
 &\Hom(\regcan_{\Delta\cX},\cC)=0=\Hom(\cC^{\star-1},\cC)\quad&\text{for $\cP$ a spherical kernel}\\\label{Pcond2}
 &\Hom(\cD^{\star i},\cD^{\star n})=0\quad \forall\, -n\le i\le n-1\quad&\text{for $\cP$ a $\P^n$-kernel}
\end{align}
(where $\cC^{\star-1}$ denotes the kernel of the autoequivalence which is inverse to $\cC$) as most known spherical and $\P^n$-functors do, both definitions are equivalent. Note that there are spherical functors satisfying \ref{Pcond} but \ref{Pcond2}. 
Indeed, let $\cP\in \Db_\mu(\cX\times \cY)$ satisfy axiom (i) of a $\P^n$-functor and let there be an isomorphism $\theta\colon\cP^R\cong \cD^{\star n}\star \cP^L$. Then, for $\psi\colon \cP^R\to \cD^{\star n}\star \cP^L$ to be an isomorphism it is sufficient that $\psi$ is non-zero because $\cP^R$ is simple. For $i<j$ set $\cD^{\star[i,j]}:=\cD^{\star i}\oplus\cD^{\star i+1}\oplus\ldots\oplus \cD^{\star j}$ and consider the triangle 
\[\cD^{\star[0, n-1]}\star \cP^L\to \cP^R\star \cP\star \cP^L\xrightarrow{\alpha_n\star \cP^L} \cD^{\star n}\star\cP^L\,.\]
Under the assumption $\psi=(\alpha_n\star \cP^L)\circ(\cP^R\star \eta_L)=0$ it would follow that $\cP^R\star \eta_L$ factors over $\cD^{\star[0, n-1]}\star \cP^L$. Thus, the identity 
$\id_{\cP^R\star \cP}= (\cP^R\star\cP\star \eps_L)\circ (\cP^R\star \eta_L\star\cP)$ would factor over 
\begin{align*}
 \cD^{\star[0,n-1]}\star\cP^L\star \cP\cong \cD^{\star[0,n-1]}\star\cD^{\star-n}\star \cP^R\star \cP\cong \cD^{\star[0,n-1]}\star\cD^{\star-n}\star \cD^{\star[0,n]}
\end{align*}
which is impossible by condition (\ref{Pcond2}).
\end{remark}

\begin{prop}\label{Plift}
Let $P\in \Db(\cX\times \cY)$ be a spherical kernel satisfying (\ref{sphericalcond}) (a $\P^n$-kernel satisfying (\ref{Pcond})) which allows a $\mu$-linearisation $\nu$. Then $\cP:=(P,\nu)$ is again a spherical ($\P^n$-) kernel and also satisfies the respective condition. 
\end{prop}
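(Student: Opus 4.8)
The plan is to reduce each axiom to its known non-equivariant counterpart for $P$ by exploiting the restriction functor $\Res\colon\Db_{\id_H}(\cX\times\cX)\to\Db(\cX\times\cX)$ and its analogues on the other product stacks. I will use three formal properties of $\Res$: it is triangulated, hence commutes with cones, and is compatible with the convolution $\star$ (as already noted in the proof of Proposition \ref{equilift}); it is conservative, because the inverse of an equivariant isomorphism is again equivariant; and it is faithful, with $\Hom_{\Db_{\id_H}}(\cE,\cF)=\Hom(\Res\cE,\Res\cF)^H$, so in particular an equivariant $\Hom$-group vanishes as soon as the underlying one does. Since $P$ is simple (as observed before the statement), Lemma \ref{liftadj} equips $P^L$ and $P^R$ with $\mu^{-1}$-linearisations turning $\cP^L\dashv\cP\dashv\cP^R$ into adjunctions in the equivariant categories whose units and counits restrict to those of $P$.

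For the spherical case I would set $\cC:=\cone(\widetilde\eta)\in\Db_{\id_H}(\cX\times\cX)$, where $\widetilde\eta\colon\regcan_{\Delta\cX}\to\cP^R\star\cP$ is the equivariant unit. Then $\Res\cC=\cone(\eta)=C$ is the cotwist kernel of $\FM_P$. As the kernel of an equivalence, $C$ is simple (its endomorphisms are $\Hom(\reg_{\Delta\cX},C^R\star C)\cong\IC$), so Proposition \ref{equilift}(ii) with $\mu=\id_H$ shows that $\FM^{\id_H}_\cC$ is an equivalence and $\cC$ is a cotwist kernel. The composition $\phi$ is assembled from the equivariant counit of $\cP^L\dashv\cP$ and the connecting map $\beta$, hence restricts to the non-equivariant $\phi$, which is an isomorphism; conservativity makes it an isomorphism in $\Db_{\id_H}$. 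Finally $\Hom_{\Db_{\id_H}}(\regcan_{\Delta\cX},\cC)=\Hom(\reg_{\Delta\cX},C)^H=0$ by (\ref{sphericalcond}), so $\cP$ is a spherical kernel satisfying (\ref{sphericalcond}).

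In the $\P^n$ case, axioms (ii) and (iii) and condition (\ref{Pcond}) follow by the same pattern once axiom (i) is in place: the relevant maps are equivariant and restrict to those for $P$, so the isomorphism statements follow from conservativity, the vanishings $c_{ij}=0$ for $i<j$ from faithfulness, and $\Hom_{\Db_{\id_H}}(\regcan_{\Delta\cX},\cD^{\star i})=0$ from (\ref{Pcond}). The real obstacle is axiom (i), namely producing a single equivariant autoequivalence kernel $\cD$ together with an equivariant splitting $\cP^R\star\cP\cong\regcan_{\Delta\cX}\oplus\cD\oplus\cdots\oplus\cD^{\star n}$; here $\cone(\widetilde\eta)$ only recovers $\bigoplus_{i\ge1}D^{\star i}$, not $D$ itself.

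To construct $\cD$ I would argue as follows. Each $D^{\star i}$ is the kernel of an equivalence, hence simple, and the $D^{\star i}$ are pairwise non-isomorphic: an isomorphism $D^{\star i}\cong D^{\star j}$ with $i<j$ would give $\reg_{\Delta\cX}\cong D^{\star(j-i)}$ after convolving with the inverse of $D$, contradicting (\ref{Pcond}). Thus $\End(P^R\star P)\cong\IC^{n+1}$ has unique primitive idempotents $e_0,\dots,e_n$ cutting out the summands. Using the equivariant algebra structure on $\cN:=\cP^R\star\cP$ (unit $\widetilde\eta$ and the multiplication induced by the equivariant counit), one checks that the $\id_H$-action fixes $\regcan_{\Delta\cX}$ and preserves the grading of $N=\bigoplus_i D^{\star i}$ determined by this algebra structure, so each $e_i$ is $H$-invariant, i.e.\ lies in $\End(P^R\star P)^H=\End_{\Db_{\id_H}}(\cN)$. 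Since the equivariant derived category is Karoubian, these idempotents split, yielding equivariant $\cD_i$ with $\Res\cD_i=D^{\star i}$; I set $\cD:=\cD_1$, which is an autoequivalence kernel by Proposition \ref{equilift}. Finally the equivariant multiplication of $\cN$ supplies nonzero equivariant maps $\cD^{\star i}\to\cD_i$ between two linearisations of the simple object $D^{\star i}$, which by Lemma \ref{simple} must then coincide, giving the desired equivariant decomposition. Verifying that the $\id_H$-action fixes the idempotents $e_i$, that is, the compatibility of $\star$ with the equivariant structure, is the technical heart of the argument.
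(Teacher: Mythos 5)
Your spherical half is fine, and it is essentially the argument the paper intends (the paper only writes out the $\P^n$ case and declares the spherical one similar): since $\Db_{\id_H}(\cX\times\cX)$ is the derived category of a quotient stack and hence triangulated, one may take the cone of the equivariant unit, and then conservativity and faithfulness of $\Res$, together with Proposition \ref{equilift}, give everything; condition (\ref{sphericalcond}) enters only to guarantee simplicity of $P$ (so that Lemma \ref{liftadj} applies) and it descends to $\cC$ because equivariant Hom spaces are invariants inside the ordinary ones.

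The $\P^n$ half has a genuine gap, exactly at the point you call the technical heart. The claim $\End(P^R\star P)\cong\IC^{n+1}$ is false in general. Condition (\ref{Pcond}) yields $\Hom(D^{\star i},D^{\star j})=0$ only for $i<j$ (convolve with $D^{\star -i}$); for $i>j$ nothing vanishes, and Schur's lemma is not available in a derived category: simplicity only means $\End=\IC$, and non-isomorphic simple objects can admit many nonzero morphisms. Concretely, for Addington's $\P^{n-1}$-kernels on Hilbert schemes of K3 surfaces one has $D\cong\reg_{\Delta\cX}[-2]$, so $\Hom(D^{\star i},D^{\star j})\cong\Hom(\reg_{\Delta\cX},\reg_{\Delta\cX}[2(i-j)])\neq 0$ for $i>j$. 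Hence $\End(P^R\star P)$ has a nontrivial nilpotent radical, its primitive idempotents are unique only up to conjugation, the monad structure on $\cP^R\star\cP$ determines at most the filtration and not the grading, and there is no reason for your $e_i$ to be $H$-invariant. What is true --- and what the paper proves as its key step --- is precisely the one-sided statement: there are no nonzero maps $D^{\star i}\to(\bar h\times\mu(h))^*D^{\star j}$ for $i<j$, so the linearisation of $P^R\star P$ transported via $\alpha$ is triangular; its diagonal components then define linearisations of the $D^{\star i}$ (a triangular isomorphism has invertible diagonal, and diagonals compose multiplicatively, so the cocycle condition survives), and these are the paper's $\cD_i$. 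The paper then deduces $\cD_i\cong\cD^{\star i}$ by induction from the equivariance of the maps $c_{ii}$ (rather than from Lemma \ref{simple}), observes that $\psi$ is literally the same morphism as in the non-equivariant case, and obtains (\ref{Pcond}) for $\cP$ from $\Hom_{\Db_{\id_H}(\cX\times\cX)}(\regcan_{\Delta\cX},\cD^{\star i})\subset\Hom(\reg_{\Delta\cX},D^{\star i})$. If you want to keep your idempotent language, you must replace ``$H$ fixes the $e_i$'' by ``$H$ preserves the filtration'' and then split the filtration equivariantly (the connecting morphisms of the equivariant triangles restrict to zero and $\Res$ is faithful); that is, in substance, the paper's argument.
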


\begin{proof}
 We give the proof only in the case that $P$ is a $\P^n$-kernel since the proof in the spherical case is similar. Let $P$ be a $\P^n$-kernel with $\P$-cotwist kernel $D$ and an isomorphism $\alpha\colon P^R\star P\cong \reg_{\Delta\cX}\oplus D\oplus\ldots\oplus D^{\star n}$ and let $\nu$ be a $\mu$-linearisation of $P$. For $i<j$ and $h\in H$ there do not exist non-zero morphisms $D^{\star i}\to (\bar h\times \mu(h))^*D^{\star j}$. Indeed, applying $(\bar h^{-1}\times \mu(h)^{-1})^*\circ \star D^{\star -i}$ to such a morphism would yield a non-zero morphism 
 $\reg_{\Delta\cX}\cong (\bar h^{-1}\times \mu(h^{-1}))^*\reg_{\Delta\cX}\to D^{\star j-i}$ contradicting (\ref{Pcond}).
 It follows that the linearisation $\nu^R\star \nu$ of $P^R\star P$ (see Lemma \ref{liftadj}) induces via the isomorphism $\alpha$ linearisations of the $D^{\star i}$. We set $\cP:=(P,\nu)$, $\cP^R:=(\cP^R,\nu^R)$, and denote for $i=1,\ldots,n$ the $D^{\star i}$ equipped with the induced linearisations by $\cD_i$, so that $\alpha$ gives an isomorphism $\cP^R\star \cP\cong\regcan_{\Delta\cX}\oplus\cD_1\oplus\ldots\oplus\cD_n$ in $\Db_{\id_H}(\cX\times \cX)$.
 The $\cD_i$ are equivalence kernels by Proposition \ref{equilift}(ii).
 By the definition of the linearisations of the $D^{\star i}$ the projections $\alpha_i \colon P^R\star P\to D^{\star i}$ are equivariant. Since the $c_{ii}$ are given by a composition of the $\alpha_i$, their inverses, and counits of adjunctions, the $c_{ii}$ are equivariant, too. Hence, they can be seen as isomorphisms $c_{ii}\colon \cD_1\star \cD_{i-1}\cong \cD_{i}$ in $\Db_{\id_H}(\cX\times \cX)$. This shows by induction that $\cD_i\cong \cD^{\star i}$, where $\cD:=\cD_1$, so that $\cP$ satisfies condition (i) of a $\P^n$-kernel. It also shows that condition (iii) of a $\P^n$-kernel holds. The morphism $\psi$ associated to $\cP$ is just the same as the morphism $\psi$ associated to $P$, since also the isomorphism $\alpha$ is the same in the equivariant and the non-equivariant case. In particular, $\psi$ is an isomorphism and $\cP$ fulfils condition (ii) of a $\P$-functor. Finally, $\Hom(\regcan_{\Delta\cX},\cD^{\star i})\subset \Hom(\reg_{\Delta\cX},D^{\star i})$ so that condition (\ref{Pcond}) is still satisfied.     
\end{proof}

Summarising the results of this section, we have

\begin{theorem}\label{mainthm}
Let $\cX$ and $\cY$ be smooth projective stacks, let $H\subset A(\cX)$ and $H'\subset A(\cY)$ be finite subgroups acting categorically and let $P\in \Db(\cX\times \cY)$. If $P$ admits a $\mu$-linearisation $\nu$ for some isomorphism $\mu\colon H\to H'$, then $F=\FM_P\colon \Db(\cX)\to \Db(\cY)$ lifts to $\widetilde{F}=\FM_{(P,\nu)}^\mu\colon \Db_H(\cX)\to \Db_{H'}(\cY)$. If $F$ is fully faithful (an equivalence, a spherical kernel satisfying (\ref{sphericalcond}) or a $\P^n$-kernel satisfying (\ref{Pcond})), then the same holds for $\widetilde{F}$. 
\end{theorem}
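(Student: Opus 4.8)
The plan is to recognise that this statement is a summary theorem: each of its assertions has already been established in the course of the section, so the proof reduces to citing the relevant results in turn and recording that their hypotheses match. First, the claim that $\widetilde{F}=\FM^\mu_{(P,\nu)}$ is an honest lift of $F=\FM_P$ is precisely Lemma \ref{liftlem}, which in fact gives more, namely that the lift is $\hat\mu$-equivariant. Nothing extra is required here, since the Fourier--Mukai transform of $\mu$-type was built so that the two compatibility isomorphisms of Definition \ref{equiliftdef} (with $\Res$ and $\Inf$) hold automatically.

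For the preservation of categorical properties I would handle the fully faithful and equivalence cases together by invoking Proposition \ref{equilift}: its parts (i) and (ii) supply exactly the two implications, once the linearisation $\nu$ has been fixed. For the spherical and $\P^n$ cases the key input is Proposition \ref{Plift}, which says that if $P$ is a spherical kernel satisfying (\ref{sphericalcond}) (respectively a $\P^n$-kernel satisfying (\ref{Pcond})) and admits a $\mu$-linearisation $\nu$, then $\cP=(P,\nu)$ is again a spherical (respectively $\P^n$-) kernel obeying the same condition. Since $\widetilde{F}=\FM^\mu_\cP$ is by definition the spherical (respectively $\P^n$-) functor attached to $\cP$, this is exactly the assertion that $\widetilde{F}$ inherits the property from $F$. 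Assembling these three citations yields all parts of the theorem in a single line.

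The point worth flagging, rather than an obstacle internal to the summary, is that the genuine content lives entirely inside the cited propositions. The fully faithful and equivalence implications rest on the simplicity of $\reg_{\Delta\cX}$ together with Lemma \ref{simple}, which pins down the convolution $\cQ\star\cP$ as $\regcan_{\Delta\cX}\otimes\rho$ for a single character $\rho\in\hat H$; the spherical and $\P^n$ implications require the vanishing conditions (\ref{sphericalcond}) and (\ref{Pcond}) in order to transport the linearisation of $\cP^R\star\cP$ across the splitting isomorphism $\alpha$ onto the summands $\cD^{\star i}$ and to exclude spurious equivariant morphisms among them. Because those verifications have already been carried out, the proof of the theorem itself is merely the synthesis described above.
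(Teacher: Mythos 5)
Your proposal is correct and matches the paper's own proof, which likewise consists of combining Lemma \ref{liftlem} (for the lift and its $\hat\mu$-equivariance) with Propositions \ref{equilift} and \ref{Plift} (for preservation of the fully faithful, equivalence, spherical and $\P^n$ properties). Your additional commentary on where the real content lies is accurate but not needed for the proof itself.
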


\begin{proof}
This is just a combination of Lemma \ref{liftlem} together with Propositions \ref{equilift} and \ref{Plift}.
\end{proof}

\subsection{Equivariance of functors vs.\ equivariance of kernels}
\begin{definition}\label{mu-linearisable}
Let $\mu\colon H\to H'$ be a $c$-isomorphism and $P\in \Db(\cX\times \cY)$. We say that $P$ is \textit{$\mu$-linearisable} if it admits a $\mu$-linearisation, i.e. if there is an object $\cP=(P,\nu)\in \Db_\mu(\cX\times \cY)$ such that $\Res\cP=P$.
There is also the weaker notion of \textit{$\mu$-invariance} of $P$ which means that there are for $h\in H$ isomorphisms
$P\cong (\bar h\times\mu(h))^*P$ not necessarily satisfying the cocycle condition.
\end{definition}

Lemma \ref{liftlem} asserts that $F=\FM_P$ lifts as soon as its kernel $P$ is $\mu$-linearisable for some $\mu$.

\begin{definition}\label{mu-equivariant}
We say that an exact functor $F\colon \Db(\cX)\to \Db(\cY)$ is \emph{$\mu$-equivariant} if $F\circ h^*=\mu(h)^*\circ F$ for all $h\in H$. 
\end{definition}

Note that by \cite[Rem.\ 4.1]{CStwisted} a functor $F\colon \Db(\cX)\to\Db(\cY)$ with a left adjoint is a Fourier-Mukai transform with kernel unique up to isomorphism if the following condition holds
\begin{align}\label{FMcond}
\Hom_{\Db(\cY)}(F(A),F(B)[j])=0 \quad\text {for all $A,B\in \Coh(\cX)$ and all $j<0$.} 
\end{align}

\begin{lemma}\label{cyclic-inv-equiv}
 Let $P\in \Db(\cX\times \cY)$ and $F=\FM_P\colon \Db(\cX)\to \Db(\cY)$.
\begin{enumerate}
 \item $P$ is $\mu$-linearisable $\Longrightarrow$ $P$ is $\mu$-invariant. 
\item If $P$ is simple and $H$ is cyclic, the converse of (i) holds.
\item $P$ is $\mu$-invariant $\Longrightarrow$ $F$ is $\mu$-equivariant.
\item If $F$ satisfies condition (\ref{FMcond}), the converse of (iii) holds.  
\end{enumerate}
\end{lemma}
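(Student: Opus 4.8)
The plan is to treat the four implications separately: the first two via the obstruction-theoretic formalism for linearisations, the last two via the Fourier--Mukai kernel calculus. For (i), a $\mu$-linearisation is by definition a family of isomorphisms $\nu_h\colon P\to(\bar h\times\mu(h))^*P$ subject to the cocycle condition, so simply forgetting that condition leaves exactly the data recording $\mu$-invariance; this is immediate. For (ii), I would observe that $\mu$-invariance of $P$ is precisely $H$-invariance of $P$ with respect to the categorical $H$-action $h\mapsto(\bar h\times\mu(h))^*$ on $\Db(\cX\times\cY)$ that defines $\Db_\mu(\cX\times\cY)$. Since $P$ is simple and $H\cong\Z/n\Z$ is cyclic, Remark \ref{cysimple} gives $\Ho^2(H,\IC^*)=0$, so the obstruction class $[P]$ of Lemma \ref{simple} vanishes and $P$ is linearisable, i.e.\ $\mu$-linearisable.

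For (iii) I would run the kernel calculus. Write $h=(\cL,\phi)$ and $\mu(h)=(\cL',\phi')$, so $h^*=\MM_\cL\circ\phi^*$ and $\mu(h)^*=\MM_{\cL'}\circ\phi'^*$. Using the elementary transformation rules $\FM_P\circ\MM_\cL=\FM_{P\otimes\pr_\cX^*\cL}$, $\FM_P\circ\phi^*=\FM_{(\phi\times\id)_*P}$, $\psi_*\circ\FM_P=\FM_{(\id\times\psi)_*P}$ and $\MM_{\cL'}\circ\FM_P=\FM_{P\otimes\pr_\cY^*\cL'}$ (all verified by the projection formula and base change along the graph automorphisms), I would rewrite $F\circ h^*\cong\FM_{(\phi\times\id)_*(P\otimes\pr_\cX^*\cL)}$ and $\mu(h)^*\circ F\cong\FM_{(\id\times\phi'^{-1})_*P\otimes\pr_\cY^*\cL'}$. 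It then remains to check that these two kernels are isomorphic exactly when $P\cong(\phi\times\phi')^*P\otimes\pr_\cX^*\cL^{-1}\otimes\pr_\cY^*\cL'$, which is the defining isomorphism of $\mu$-invariance. This is a short computation: tensoring the invariance isomorphism by $\pr_\cX^*\cL$, applying $(\phi\times\id)_*$, and simplifying with the projection formula together with $\phi'^*=(\phi'^{-1})_*$ turns one kernel into the other. The twist by $\bar h=(\cL^{-1},\phi)$ rather than $h$ is precisely what makes the line-bundle contributions cancel, which is the very reason the automorphism $c$ was introduced.

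For (iv) I would exploit uniqueness of kernels. Since $F=\FM_P$ admits the left adjoint $\FM_{P^L}$ and satisfies \eqref{FMcond}, \cite[Rem.\ 4.1]{CStwisted} guarantees its kernel is determined up to isomorphism. I would first note that the composite functors $F\circ h^*$ and $\mu(h)^*\circ F$ again have left adjoints and again satisfy \eqref{FMcond}: the autoequivalence $h^*$ preserves $\Coh(\cX)$, so \eqref{FMcond} for $F$ applied to $h^*A,h^*B$ yields it for $F\circ h^*$, while $\mu(h)^*$ is an equivalence and hence creates no negative $\Hom$'s. Consequently the $\mu$-equivariance hypothesis $F\circ h^*\cong\mu(h)^*\circ F$ forces an isomorphism of the corresponding kernels $(\phi\times\id)_*(P\otimes\pr_\cX^*\cL)\cong(\id\times\phi'^{-1})_*P\otimes\pr_\cY^*\cL'$, and reading the equivalence of (iii) backwards this is the same as $\mu$-invariance of $P$.

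The only genuinely technical point, and the step I expect to be the main obstacle, is the kernel bookkeeping in (iii): keeping the pushforwards, pullbacks and line-bundle twists straight under the projection formula and verifying that the derived manifestation of the identity is an honest \emph{if and only if}, so that it can be reversed in (iv). Everything else reduces to the cited uniqueness statement and to the obstruction-class formalism of Lemma \ref{simple}.
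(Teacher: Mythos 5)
Your proposal is correct and follows essentially the same route as the paper: (i) by definition, (ii) via the vanishing of $\Ho^2(H,\IC^*)$ for cyclic $H$ applied through Lemma \ref{simple}/Remark \ref{cysimple}, (iii) by identifying the kernels of $F\circ h^*$ and $\mu(h)^*\circ F$ and matching them against the $\mu$-invariance isomorphism, and (iv) by the kernel-uniqueness statement of \cite[Rem.\ 4.1]{CStwisted} after checking that condition (\ref{FMcond}) is inherited by the composites. The only difference is presentational: where you re-derive the composition rules for kernels by hand via the projection formula, the paper obtains the single identity $\mu(h)^*\circ F\circ (h^*)^{-1}\cong\FM_{(\bar h\times \mu(h))^*P}$ directly from Lemma \ref{Orlovlem}, which packages exactly that bookkeeping.
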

\begin{proof}
The first assertion follows directly by definition and (ii) is Remark \ref{cysimple}. For (iii) we use Lemma \ref{Orlovlem}. It asserts in our situation that 
\begin{align}\label{orlequi}\mu(h)^*\circ F\circ (h^*)^{-1}\cong\FM_{(\bar h\times \mu(h))^*P} \quad\text{for all $h\in H$;}\end{align}
compare Remark \ref{reverse}.
Thus, the existence of an isomorphism $P\cong (\bar h\times \mu(h))^*P$ implies the $\mu$-equivariance of $F$. Finally, if $F$ satisfies condition (\ref{FMcond}), then $\mu(h)^*\circ F\circ (h^*)^{-1}$ does too. Thus, (iv) follows from (\ref{orlequi}) together with \cite[Rem.\ 4.1]{CStwisted}.      
\end{proof}

\begin{remark}
Let $P\in \Db(\cX\times \cY)$ be a kernel of an equivalence, a fully faithful functor, a spherical kernel satisfying (\ref{sphericalcond}), or a $\P^n$-kernel satisfying (\ref{Pcond}).  
Then $P$ is simple by formula (\ref{RFformula}) together with the fact that $\reg_{\Delta\cX}$ is simple; compare \cite[Lem.\ 4]{Ploog-equiv}. Thus, if $H$ is cyclic, the functor $F=\FM_P$ lifts as soon as it is $\mu$-equivariant. Furthermore, note that in this case $P$ has exactly $|\hat H|$ different $\mu$-linearisations. The induced lifts $\widetilde F\colon \Db_H(\cX)\to \Db_{H'}(\cY)$ differ by $\MM_{\rho}$ for $\rho\in \hat H$.  
\end{remark}
\section{Geometric Interpretation}\label{geomint}
By a \textit{Galois cover with group of deck transformations $H$} of smooth projective stacks we mean a quotient morphism $\pi\colon \widetilde \cX\to \cX:=[\widetilde\cX/H]$ for a finite subgroup $H\subset \Aut(\widetilde \cX)$.
For details on quotients of stacks by group actions, we refer to \cite{Rom}.
\begin{definition}\label{geomliftdef}
Let $\pi\colon \widetilde\cX\to\cX$ and $\pi'\colon \widetilde\cY\to\cY$ be Galois covers with groups of deck transformation $H$ and $H'$, respectively, and $\widetilde F\colon \Db(\widetilde \cX)\to \Db(\widetilde \cY)$ and $F\colon
\Db(\cX)\to \Db(\cY)$ be exact functors. 
Then $F$ is called a \textit{descent} of $\tilde F$ (and, equivalently, $\widetilde F$ is called a \textit{lift} of $F$) if the following two conditions hold
\begin{align*}
F\circ \pi_*\cong \pi'_*\circ \widetilde F\colon \Db(\widetilde \cX)\to \Db(\cY), \quad
\pi'^*\circ F\cong \widetilde F\circ \pi^*\colon \Db(\cX)\to\Db(\widetilde\cY). \end{align*}
\end{definition}

By \cite[Thm.\ 4.1]{Rom}, the cover $\pi\colon \widetilde \cX\to \cX=[\widetilde \cX/H]$ is an $H$-torsor. Thus, $\Coh(\cX)\cong \Coh_H(\widetilde\cX)$; see \cite[Thm.\ 4.46]{Vis}.
Under this isomorphism, $\pi^*$ corresponds to $\Res$ and $\pi_*$ corresponds to $\Inf$. Of course, similar considerations apply to $\cY$. Thus, for a functor $F\colon \Db(\widetilde \cX)\to \Db(\widetilde\cY)$ a descent $\Db(\cX)\to \Db(\cY)$ in the sense of Definition \ref{geomliftdef} is the same as an equivariant lift $\Db_H(\tilde\cX)\to \Db_{H'}(\tilde \cY)$ in the sense of Definition \ref{equiliftdef}.   
Furthermore, the objects $\regcan_\cX\otimes\rho$ for $\rho\in \hat H$ correspond to the line bundles on $[\widetilde\cX/H]$ which are in the kernel of $\pi^*\colon \Pic([\widetilde\cX/H])\to \Pic(\widetilde{\cX})$; compare \cite[Sect.\ 7]{Mumbook}. Hence, we can identify $\hat H$ with the subgroup $\ker(\pi^*)\subset\Pic([\widetilde\cX/H])$. Combining the above with Theorem \ref{mainthm} immediately gives 


\begin{prop}\label{descentthm}
Let $\widetilde\cX\to \cX$ and $\widetilde\cY\to \cY$ be Galois covers with groups of deck transformations $H$ and $H'$, respectively. Let $\Phi=\FM_P\colon \Db(\widetilde\cX)\to \Db(\widetilde\cY)$ be a Fourier-Mukai transform. If $P\in \Db(\widetilde \cX\times \widetilde\cY)$ is $\mu$-linearisable for some isomorphism $\mu\colon H\cong H'$, then there is a $\hat \mu$-equivariant descent $ \Psi\colon \Db(\cX)\to \Db(\cY)$. In addition, if $\Phi$ is an equivalence (fully faithful, spherical satisfying (\ref{sphericalcond}), $\P^n$ satisfying (\ref{Pcond})), the same holds for $\Psi$.\qqed 
\end{prop}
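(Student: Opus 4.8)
The plan is to reduce the statement entirely to Theorem \ref{mainthm}, using the dictionary between Galois covers and equivariant categories recorded in the paragraph immediately preceding the proposition. First I would invoke the identifications $\Db(\cX)\cong \Db_H(\widetilde\cX)$ and $\Db(\cY)\cong \Db_{H'}(\widetilde\cY)$ that follow, via \cite[Thm.\ 4.1]{Rom} and \cite[Thm.\ 4.46]{Vis}, from the fact that a Galois cover is an $H$-torsor. Under these identifications $\pi^*$ becomes $\Res$ and $\pi_*$ becomes $\Inf$ (and likewise for $\cY$), so that a descent of $\Phi$ in the sense of Definition \ref{geomliftdef} is literally the same datum as an equivariant lift of $\Phi=\FM_P$ in the sense of Definition \ref{equiliftdef}. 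Since here $H\subset \Aut(\widetilde\cX)$ and $H'\subset \Aut(\widetilde\cY)$, both groups act categorically and, by Definition \ref{mu-linearised}, every isomorphism $\mu\colon H\to H'$ is automatically a $c$-isomorphism; thus the hypotheses of Theorem \ref{mainthm} are satisfied with $\widetilde\cX,\widetilde\cY$ in place of $\cX,\cY$.

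Next I would simply apply Theorem \ref{mainthm} to $\FM_P\colon \Db(\widetilde\cX)\to \Db(\widetilde\cY)$. The given $\mu$-linearisation $\nu$ produces the lift $\widetilde{F}=\FM^\mu_{(P,\nu)}\colon \Db_H(\widetilde\cX)\to \Db_{H'}(\widetilde\cY)$, which under the dictionary above is exactly the sought descent $\Psi\colon \Db(\cX)\to \Db(\cY)$. The stability of each property—fully faithful, equivalence, spherical satisfying (\ref{sphericalcond}), and $\P^n$ satisfying (\ref{Pcond})—is then read off directly from the corresponding clauses of Theorem \ref{mainthm} (which in turn rest on Propositions \ref{equilift} and \ref{Plift}), requiring no additional argument.

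It then remains only to record the $\hat\mu$-equivariance. By Lemma \ref{liftlem} the lift $\widetilde{F}$ is $\hat\mu$-equivariant in the sense of Definition \ref{beta-equivariant}, i.e.\ it intertwines the tensor actions of the character groups $\hat H$ and $\hat H'$. I would then transport this statement through the identification of $\hat H$ with $\ker(\pi^*)\subset \Pic(\cX)$ (and correspondingly for $\hat H'$), under which tensoring an equivariant object by a character $\rho$ corresponds to tensoring by the line bundle $\regcan_\cX\otimes \rho$ on $\cX$. This yields the asserted $\hat\mu$-equivariance of $\Psi$ in the geometric formulation.

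Since the proposition is explicitly a translation of Theorem \ref{mainthm}, there is no genuine obstacle; the one point deserving care is checking that the categorical $\hat\mu$-equivariance of $\widetilde{F}$ matches its intended geometric meaning once characters are reinterpreted as line bundles in $\ker(\pi^*)$. This compatibility, however, is precisely the content of the identification set up before the proposition, so the verification is routine and the argument concludes.
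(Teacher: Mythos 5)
Your proposal is correct and follows essentially the same route as the paper: the proposition is proved there exactly by the torsor identification $\Coh(\cX)\cong\Coh_H(\widetilde\cX)$ with $\pi^*\leftrightarrow\Res$, $\pi_*\leftrightarrow\Inf$, the identification of $\hat H$ with $\ker(\pi^*)\subset\Pic(\cX)$, and a direct application of Theorem \ref{mainthm} (whose proof already incorporates Lemma \ref{liftlem} for the $\hat\mu$-equivariance). Your additional remarks that $H\subset\Aut(\widetilde\cX)$ acts categorically and that every $\mu$ is automatically a $c$-isomorphism are exactly the implicit hypotheses the paper relies on, so nothing is missing.
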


If $\pi\colon \widetilde \cX\to \cX$ is a Galois cover with an abelian group of deck transformations $H$, we also have $\Coh(\widetilde\cX)\cong \Coh_{\hat H}(\cX)$ by Proposition \ref{abdual}. Note that in this situation $\pi_*$ is $\Res$ and $\pi^*$ is $\Inf$. The results of the previous subsection again apply and give 

\begin{prop}\label{liftthm}
Let $\pi\colon \widetilde\cX\to \cX$ and $\pi'\colon \widetilde\cY\to \cY$ be Galois covers with abelian groups of deck transformations $H$ and $H'$, respectively. Let $\Phi=\FM_P\colon \Db(\cX)\to \Db(\cY)$ be a Fourier-Mukai transform. If $P\in \Db(\cX\times \cY)$ is $\mu$-linearisable for some isomorphism $\mu\colon \hat H\cong \hat H'$, there is a $\hat \mu$-equivariant lift $\Psi\colon \Db(\widetilde \cX)\to \Db(\widetilde \cY)$. In addition, if $\Phi$ is an equivalence (fully faithful, spherical satisfying (\ref{sphericalcond}), $\P^n$ satisfying (\ref{Pcond})), the same holds for $\Psi$.\qqed
\end{prop}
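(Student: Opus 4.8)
The plan is to reduce the statement to the equivariant lifting result of Theorem \ref{mainthm}, applied this time to the groups of characters. Since $H$ and $H'$ are abelian, I would first invoke the duality of Proposition \ref{abdual}: starting from the identification $\Coh(\cX)\cong\Coh_H(\widetilde\cX)$ coming from the torsor structure of the Galois cover, it gives equivalences $\Db(\widetilde\cX)\cong\Db_{\hat H}(\cX)$ and $\Db(\widetilde\cY)\cong\Db_{\hat H'}(\cY)$. As recorded just before the statement, under these identifications $\pi_*$ becomes the forgetful functor $\Res$ and $\pi^*$ becomes $\Inf$, while $\hat H$ and $\hat H'$ are realised as the finite subgroups $\ker(\pi^*)\subset\Pic(\cX)\subset A(\cX)$ and $\ker(\pi'^*)\subset\Pic(\cY)\subset A(\cY)$. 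Being contained in the Picard groups, these subgroups act categorically, and any isomorphism between them---in particular $\mu$---is automatically a $c$-isomorphism in the sense of Definition \ref{mu-linearised}.

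With this dictionary in hand, the hypothesis that $P$ is $\mu$-linearisable supplies an object $\cP=(P,\nu)\in\Db_\mu(\cX\times\cY)$. I would then apply Theorem \ref{mainthm} verbatim, but with $\hat H$ and $\hat H'$ playing the roles of $H$ and $H'$: it produces the $\hat\mu$-equivariant lift $\FM^\mu_\cP\colon\Db_{\hat H}(\cX)\to\Db_{\hat H'}(\cY)$ of $\Phi=\FM_P$, and guarantees that this lift is fully faithful, an equivalence, a spherical kernel satisfying (\ref{sphericalcond}), or a $\P^n$-kernel satisfying (\ref{Pcond}) as soon as $\Phi$ is.

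Transporting $\FM^\mu_\cP$ back through the equivalences of the first step yields the functor $\Psi\colon\Db(\widetilde\cX)\to\Db(\widetilde\cY)$. To finish I would match the two notions of lift: the equivariant conditions $\Phi\circ\Res\cong\Res\circ\FM^\mu_\cP$ and $\Inf\circ\Phi\cong\FM^\mu_\cP\circ\Inf$ from Definition \ref{equiliftdef} become, under $\pi_*=\Res$ and $\pi^*=\Inf$, exactly the relations $\Phi\circ\pi_*\cong\pi'_*\circ\Psi$ and $\pi'^*\circ\Phi\cong\Psi\circ\pi^*$ demanded of a geometric lift in Definition \ref{geomliftdef}; and $\hat\mu$-equivariance is preserved under the identification $\hat H=\ker(\pi^*)$. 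The preservation of the four special properties then follows from the corresponding clause of Theorem \ref{mainthm}. This is the exact mirror of the argument establishing Proposition \ref{descentthm}, the only difference being that one linearises by $\Pic$-subgroups on the quotient rather than by $\Aut$-subgroups on the cover.

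The one step requiring genuine care is the bookkeeping of the first paragraph: passing from the identification $\Db(\cX)\cong\Db_H(\widetilde\cX)$ to its dual $\Db(\widetilde\cX)\cong\Db_{\hat H}(\cX)$, one must verify that the forgetful and inflation functors are interchanged (which rests on the biadjointness of Lemma \ref{ResInfadjunction} together with that of $\pi^*$ and $\pi_*$), and that the character action of $\hat H$ corresponds to tensoring by the line bundles $\regcan_\cX\otimes\rho\in\ker(\pi^*)$. Once this is confirmed, everything else is a formal transcription and no further obstacle arises.
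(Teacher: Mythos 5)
Your proposal is correct and follows essentially the same route as the paper: the paper's (very terse) proof consists precisely of invoking Proposition \ref{abdual} to identify $\Db(\widetilde\cX)\cong\Db_{\hat H}(\cX)$ with $\hat H=\ker(\pi^*)\subset\Pic(\cX)$, noting that $\pi_*$ and $\pi^*$ become $\Res$ and $\Inf$, and then applying the results of Section \ref{th3} (Theorem \ref{mainthm}) with the character groups as the linearising subgroups. Your additional bookkeeping (categorical action of $\Pic$-subgroups, automatic $c$-isomorphism, matching Definitions \ref{equiliftdef} and \ref{geomliftdef}) just makes explicit what the paper leaves implicit.
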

\begin{remark}\label{simplecyclic}
If $H$ and $H'$ are cyclic and $P$ is simple, which is always the case if the associated FM transform $\Phi$ is fully faithful, one can replace the condition that $P$ is $\mu$-linearisable in Propositions \ref{descentthm} and \ref{liftthm} by the weaker condition that $\Phi$ is $\mu$-equivariant; see Lemma \ref{cyclic-inv-equiv}. 
\end{remark}

Conversely, let $\cX$ be a smooth projective stack and $H\subset\Pic\cX$ a finite subgroup. Let $\cX_H:=\Spec(\cA_H)$ where $\cA_H=\oplus_{\cL\in H}\cL$ is an $\reg_\cX$-algebra with multiplication given by tensor product.
The group of characters $\hat H$ acts on $\cA_H$. The action of $\rho\in \hat H$ is multiplication by $\rho(L)$ on the summands $L$ of $\cA$. The induced action on $\cX_H$ satisfies $[\cX_H/\hat H]\cong \cX$, so that $\cX_H\to\cX$ is a Galois cover with $\hat H$ as the group of deck transformations.       

In particular, if the canonical bundle of $\cX$ is torsion of order $n$, there is the \textit{canonical cover} $\widetilde\cX:=\cX_\omega:=\cX_{\langle \omega_X\rangle}$. It is a cyclic Galois cover of order $n$ of $\cX$ with trivial canonical bundle. 

Note that if $\cY$ is another smooth projective stack together with an equivalence $\Db(\cX)\cong \Db(\cY)$, then the canonical bundle of $\cY$ is also torsion of order $n$. This follows from the fact that equivalences commute with Serre functors; compare \cite[Prop.\ 1.30]{Huy-book}.
 
\begin{cor}\label{cancov}
If $\cX$ and $\cY$ are smooth projective stacks with torsion canonical bundles, then every equivalence $\Phi\colon \Db(\cX)\to \Db(\cY)$ lifts to an equivalence $\Psi\colon \Db(\widetilde \cX)\to \Db(\widetilde \cY)$ between the canonical covers.  
\end{cor}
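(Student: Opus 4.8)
The plan is to apply Proposition \ref{liftthm} to the canonical covers $\widetilde\cX=\cX_{\langle\omega_X\rangle}$ and $\widetilde\cY=\cY_{\langle\omega_Y\rangle}$, whose groups of deck transformations are cyclic of order $n=\ord(\omega_X)$. First I would invoke representability to write the equivalence as a Fourier--Mukai transform $\Phi=\FM_P$ with $P\in\Db(\cX\times\cY)$; since $\Phi$ is in particular fully faithful, the kernel $P$ is simple by formula (\ref{RFformula}). As noted just before the statement, equivalences commute with Serre functors, so $\omega_Y$ is again torsion of order $n$ and both canonical covers are cyclic Galois covers of the same order $n$.

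The central step is to produce the linearisation data demanded by Proposition \ref{liftthm}. Set $H:=\langle\omega_X\rangle\subset\Pic(\cX)$ and $H':=\langle\omega_Y\rangle\subset\Pic(\cY)$, both cyclic of order $n$, and let $\mu\colon H\to H'$ be the isomorphism determined by $\mu(\omega_X)=\omega_Y$. In the notation of Proposition \ref{liftthm} the deck groups of the two covers are $\hat H$ and $\hat H'$, and the linearisability hypothesis is phrased for an isomorphism of their duals, which under the canonical identification $\hat{\hat H}\cong H$ is exactly such a $\mu$. I would therefore verify that $\Phi$ is $\mu$-equivariant in the sense of Definition \ref{mu-equivariant}, i.e.\ that $\Phi\circ\MM_{\omega_X}\cong\MM_{\omega_Y}\circ\Phi$.

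This $\mu$-equivariance is precisely the compatibility of $\Phi$ with Serre functors. Writing $S_\cX=\MM_{\omega_X}[\dim\cX]$ and $S_\cY=\MM_{\omega_Y}[\dim\cY]$, the relation $\Phi\circ S_\cX\cong S_\cY\circ\Phi$ yields $\Phi\circ\MM_{\omega_X}\cong\MM_{\omega_Y}\circ\Phi\,[\dim\cY-\dim\cX]$, since shifts are central. To remove the shift I would note that $\omega_X^n\cong\reg_\cX$ and $\omega_Y^n\cong\reg_\cY$ give $S_\cX^{\,n}\cong[n\dim\cX]$ and $S_\cY^{\,n}\cong[n\dim\cY]$; comparing $\Phi\circ S_\cX^{\,n}\cong S_\cY^{\,n}\circ\Phi$ forces $[n\dim\cX]\cong[n\dim\cY]$, hence $\dim\cX=\dim\cY$ and thus $\Phi\circ\MM_{\omega_X}\cong\MM_{\omega_Y}\circ\Phi$, so $\Phi$ is $\mu$-equivariant.

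Finally, because $H$ is cyclic and $P$ is simple, Lemma \ref{cyclic-inv-equiv} (as packaged in Remark \ref{simplecyclic}) upgrades the $\mu$-equivariance of $\Phi$ to $\mu$-invariance and then to $\mu$-linearisability of $P$. Proposition \ref{liftthm} then produces a lift $\Psi\colon\Db(\widetilde\cX)\to\Db(\widetilde\cY)$, which is an equivalence since $\Phi$ is. The main obstacle I anticipate is purely bookkeeping: one must track the double-dual identifications carefully so that the hypothesis of Proposition \ref{liftthm}, stated for an isomorphism $\hat H\cong\hat H'$ of the deck groups' duals, matches the concrete $\mu\colon\langle\omega_X\rangle\to\langle\omega_Y\rangle$, and one must secure $\dim\cX=\dim\cY$ so that the Serre-functor relation delivers genuine $\mu$-equivariance rather than equivariance up to a shift.
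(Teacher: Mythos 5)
Your proposal is correct and follows essentially the same route as the paper: deduce $\Phi\circ\MM_{\omega_\cX}\cong\MM_{\omega_\cY}\circ\Phi$ from compatibility with Serre functors and shifts, conclude $\mu$-equivariance for $\mu(\omega_\cX)=\omega_\cY$, and then apply Remark \ref{simplecyclic} (cyclic group plus simple kernel gives $\mu$-linearisability) together with Proposition \ref{liftthm}. The extra care you take with the shift cancellation via $\dim\cX=\dim\cY$ and with the double-dual identification of the deck groups is bookkeeping the paper leaves implicit, not a different argument.
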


\begin{proof}
 Since the equivalence $\Phi$ commutes with the Serre functors and shifts, we have $\MM_{\omega_\cY^i}\circ\Phi\cong \Phi\circ\MM_{\omega_\cX^i}$ for every $i\in \Z$. This means that $\Phi$ is $\mu$-equivariant where $\mu\colon \langle \omega_\cX\rangle\cong \langle \omega_\cY\rangle$ is the isomorphism sending $\omega_\cX$ to $\omega_\cY$. The assertion follows by Proposition \ref{liftthm} and Remark \ref{simplecyclic}.   
\end{proof}

\begin{remark}
Note that if $\cX=X$ is a variety, the quotient $[X/H]$ is a variety (namely the quotient variety) if and only if $H$ acts freely on $X$. Thus, for varieties our notion of Galois covers agrees with the usual one. In particular, the criteria of \cite{BM} and \cite{LombPopa} for descent and lift along cyclic covers are obtained as special cases of Propositions \ref{descentthm} and \ref{liftthm}, respectively.
\end{remark}
\section{Applications}\label{Section-Applications}
\subsection{Hochschild homology}
Let $G$ be a finite group acting faithfully on a smooth projective variety $X$ of dimension $d$, let $\cX=[X/G]$ be the associated global quotient stack and write $\omega_{\Delta \cX}$ for $\Delta_*\omega_\cX$. Let 
$\Sigma_\cX:=\omega_{\Delta \cX}[d]\in \Db(\cX\times \cX)$ be the kernel of the Serre functor.

The Hochschild homology $\HH_*(\cX)=\Hom^*(\Sigma_\cX^{-1},\reg_{\Delta \cX})$ decomposes as

\begin{align}\label{HHdecomp}\HH_*(\cX)\cong \bigoplus_{g\in \conj(G)}\HH_*(X^g)^{\Cent(g)}\,;\end{align}
see, for example, \cite[Sect.\ 3]{Popaderived}. Here $\conj(G)$ denotes the set of conjugacy classes in $G$, $\Cent(g):=\Cent_G(g)$ is the centraliser of $g$ in $G$, and $X^g\subset X$ is the fixed point locus. The decomposition can be obtained by the following computation, in which we use the notation $G_\Delta$ for the diagonal action of $G$ on $X\times X$ and the fact that the centraliser of an element $g\in G$ is its stabiliser with respect to the conjugation action of $G$ on itself. Note that $\Hom_{\Db_{\id}(X\times X)}(-,-)\cong \Hom_{\Db_{G_\Delta}(X\times X)}(-,-)$.

\begin{align}\label{decompcomputation}
\begin{aligned}
 \Hom^*_{\Db(\cX\times \cX)}(\omega^{-1}_{\Delta\cX},\reg_{\Delta \cX})&\cong\Hom_{\Db_{G\times G}(X\times X)}(\Inf_{G_\Delta}^{G\times G}\omega^{-1}_{\Delta X},\Inf_{G_\Delta}^{G\times G}\reg_{\Delta X})\\ 
&\cong\Hom_{\Db_{\id}(X\times X)}(\omega^{-1}_{\Delta X},\Res\Inf \reg_{\Delta X})
\\ 
&\cong\Hom_{\Db(X\times X)}(\omega^{-1}_{\Delta X},\oplus_{g\in G}\, (g,1)^*\reg_{\Delta X})^G\\ 
&\cong\bigoplus_{g\in \conj(G)}\Hom_{\Db(X\times X)}(\omega^{-1}_{\Delta X},\reg_{\Gamma_g})^{\Cent(g)}\,.
\end{aligned}
\end{align}
Another computation, which we skip since it is not used in the following, shows that $\Hom_{\Db(X\times X)}(\omega^{-1}_{\Delta X}[-d],\reg_{\Gamma_g})\cong\HH_*(X^g)$. Note that in (\ref{decompcomputation}) we did not start with the Hochschild homology since we omitted the shift. 

Let $\cY=[Y/H]$ be a second smooth projective global quotient stack and $\cP\in \Db(\cX\times \cY)$.
Note that the left and right adjoint kernels are given by $\cP^L=\cP^\vee\star \Sigma_\cY$ and $\cP^R=\Sigma_\cX\star \cP^\vee$.  
The \textit{pull-back} $\cP^*\colon \HH_*(\cY)\to \HH(\cX)$ on Hochschild homology along the kernel $\cP$ is defined by sending $\nu\in\HH_*(Y)=\Hom^*(\Sigma_Y^{-1},\reg_{\Delta Y})$ to the composition 

\begin{align}\label{HHpull}
\begin{aligned}
 &\Sigma_\cX^{-1}\xrightarrow{\Sigma_\cX^{-1}\eta}\Sigma_\cX^{-1}\star\cP^R\star \cP\cong\cP^\vee\star\Sigma_\cY\star\Sigma_\cY^{-1}\star\cP\\&\xrightarrow{\cP^\vee\Sigma_\cY\nu\cP}\cP^\vee\star\Sigma_\cY\star\cP\cong \cP^L\star\cP\xrightarrow\eps\reg_{\Delta\cX}
\end{aligned}
\end{align}
(here the shift by the degree of $\nu$ is omitted in the notation); see \cite[Sect.\ 4.3]{CaWi}.
If $F=\FM_\cP$ satisfies condition (\ref{FMcond}) (so that the FM kernel is unique), it makes also sense to speak of the pull-back $F^*:=\cP^*\colon\HH_*(\cY)\to\HH_*(\cX)$ along $F$ instead. 

Let $U\lhd G$ be a normal subgroup. Then $\conj(U)\subset \conj(G)$.
\begin{lemma}
Under the isomorphism (\ref{HHdecomp}), the only non-zero components of the pull-back $\Inf^*\colon \HH_*([X/G])\to \HH_*([X/U])$ along the inflation functor $\Inf_U^G\colon \Db_U(X)\to \Db_G(X)$ are those of the form  
\[\HH_*(X^u)^{C_G(u)}\to\HH_*(X^u)^{C_U(u)}\quad \text{ for }u\in U.\]
These components are given by the embedding of invariants.
\end{lemma}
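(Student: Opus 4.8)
The plan is to evaluate the pull-back via the kernel formula (\ref{HHpull}) for the Fourier--Mukai kernel of $\Inf_U^G$ and to read off the result on the two copies of the decomposition produced by (\ref{decompcomputation}). First I would note that, under the identifications of Section \ref{geomint}, the functor $\Inf_U^G\colon\Db_U(X)\to\Db_G(X)$ is the pushforward $\pi_*$ along the finite \'etale morphism of stacks $\pi\colon[X/U]\to[X/G]$; in particular it is a Fourier--Mukai transform with some kernel $\cP$, and by Lemma \ref{ResInfadjunction} both of its adjoints are $\Res_U^G$, so the adjoint kernels satisfy $\cP^L\cong\cP^R$ (\'etaleness also gives $\Sigma_{[X/U]}\cong\pi^*\Sigma_{[X/G]}$, which trivialises the Serre twists appearing in (\ref{HHpull})). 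This supplies all the data needed to run (\ref{HHpull}).

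Next I would put both sides of $\Inf^*$ into the shape of (\ref{decompcomputation}). Writing $T_g:=\Hom^*_{\Db(X\times X)}(\omega_{\Delta X}^{-1},\reg_{\Gamma_g})\cong\HH_*(X^g)$ (suppressing the degree shift, as in the text), the computation (\ref{decompcomputation}) gives $\HH_*([X/G])\cong(\bigoplus_{g\in G}T_g)^G$ and, verbatim with $U$ in place of $G$, $\HH_*([X/U])\cong(\bigoplus_{u\in U}T_u)^U$; in both cases the grading comes from the Mackey expansion $\Res\Inf\,\reg_{\Delta X}\cong\bigoplus_g\reg_{\Gamma_g}$ of the diagonal. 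The decisive step is to trace (\ref{HHpull}) through these gradings. Inserting the unit and counit of the bi-adjunction $\Inf_U^G\dashv\Res_U^G\dashv\Inf_U^G$ (each of which, in the Mackey picture, is the obvious inclusion/projection indexed by the cosets $G/U$) and using that graph kernels convolve by the group law, $\reg_{\Gamma_g}\star\reg_{\Gamma_{g'}}\cong\reg_{\Gamma_{gg'}}$, one finds that a class supported on the $g$-sector of $\HH_*([X/G])$ is sent to the $g$-sector of $\HH_*([X/U])$. As the target carries sectors only for $g\in U$, every sector with $g\in G\setminus U$ dies, and $\Inf^*$ is precisely the projection onto the $U$-indexed summands.

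Finally I would pass to invariants. For each $u\in U$, the $u$-component of a $G$-invariant class lies in $T_u^{C_G(u)}$, and under $\Inf^*$ it becomes the $u$-component of the resulting $U$-invariant class, which lies in $T_u^{C_U(u)}$; since $C_U(u)\subset C_G(u)$ the induced map on these summands is the inclusion of invariants $T_u^{C_G(u)}\hookrightarrow T_u^{C_U(u)}$, i.e.\ the embedding $\HH_*(X^u)^{C_G(u)}\hookrightarrow\HH_*(X^u)^{C_U(u)}$ of the statement (a single $G$-conjugacy class meeting $U$ may split into several $U$-classes, each receiving such an inclusion). I expect the main obstacle to be the middle paragraph: honestly unwinding (\ref{HHpull}) through the (co)units of the bi-adjunction in the double-coset bookkeeping, checking both that the matching ($g\in U$) components appear with coefficient $1$ rather than a spurious scalar from the sums over $G/U$, and that the non-matching components vanish identically. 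As a consistency check, transitivity $\Inf_1^G=\Inf_U^G\circ\Inf_1^U$ together with contravariant functoriality $(\Inf_1^G)^*=(\Inf_1^U)^*\circ(\Inf_U^G)^*$ of the pull-back must reproduce, for $U=1$, the projection onto the untwisted sector.
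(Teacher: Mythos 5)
Your outline is, in substance, the paper's own proof: identify the Fourier--Mukai kernel of $\Inf_U^G$ as the sum of graphs, $\cP=\Inf_{U_\Delta}^{U\times G}\reg_{\Delta X}=\bigoplus_{g\in G}\reg_{\Gamma_g}\in\Db_{U\times G}(X\times X)$, with $\cP^L=\cP^R$ the same sheaf viewed in $\Db_{G\times U}(X\times X)$ (your biadjunction remark via Lemma \ref{ResInfadjunction}); feed this into the pull-back formula (\ref{HHpull}); argue that the unit and counit become the inclusion of, respectively the projection onto, the summands indexed by $U$; and finally chase an element of $\Hom^*(\omega_{\Delta X}^{-1}[-d],\reg_{\Gamma_g})^{\Cent(g)}$ through (\ref{decompcomputation}) to get the statement about invariants, including the correct observation that a $G$-conjugacy class inside $U$ may split into several $U$-classes.

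The one genuine gap is exactly the step you defer in your middle paragraph, and it is worth recording how the paper closes it, because no double-coset or Mackey bookkeeping is needed. In the composition
\begin{align*}
\bigoplus_{u\in U}\omega_{\Gamma_u}^{-1}[-d]\longrightarrow\bigoplus_{g\in G}\omega_{\Gamma_g}^{-1}[-d]\xrightarrow{\ \cP^L\nu\cP\ }\bigoplus_{g\in G}\reg_{\Gamma_g}\longrightarrow\bigoplus_{u\in U}\reg_{\Gamma_u}
\end{align*}
the outer arrows are induced by the unit and counit, and the paper pins them down by rigidity: for $g\neq g'$ there are no non-zero degree-zero morphisms between (twists of) $\reg_{\Gamma_g}$ and $\reg_{\Gamma_{g'}}$, so up to scalar the inclusion and the projection are the \emph{only} non-vanishing $U\times U$-equivariant morphisms between these objects, and the unit and counit therefore have no choice. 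This single observation disposes of both of your worries at once: the non-matching components ($g\in G\setminus U$) vanish for equivariance reasons rather than by computation, and the scalar ambiguity is harmless (it can be normalised away using the triangle identity of the adjunction, and in any case the intended application, Corollary \ref{Infinj}, only uses which components are non-zero). With this rigidity argument inserted in place of the promised ``honest unwinding,'' your outline becomes a complete proof; without it, the decisive step remains open.
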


\begin{proof}
 The FM kernel of $\Inf=\Inf_U^G$ is given by \[\cP=\Inf_{U_\Delta}^{U\times G}\reg_{\Delta\cX}=\oplus_{g\in G}\reg_{\Gamma_g}\in \Db_{U\times G}(X\times X)\,.\]
The adjoint kernels are the same as $\cP$ just considered as an object of $\Db_{G\times U}(X\times X)$ instead, that is, $\cP^L=\cP^R=\oplus_{g\in G}\reg_{\Gamma_g}\in\Db_{G\times U}(X\times X)$. Hence, in our case the composition (\ref{HHpull}) is given by
\begin{align*}
 \bigoplus_{u\in U} \omega^{-1}_{\Gamma_u}[-d] \to \bigoplus_{g\in G} \omega^{-1}_{\Gamma_g}[-d] \xrightarrow{\cP^L\nu\cP} \bigoplus_{g\in G} \reg_{\Gamma_g}\to \bigoplus_{u\in U} \reg_{\Gamma_u}\,.
\end{align*}
The first morphism is the inclusion of the summands indexed by $U$ and the last morphism is the projection to the summands indexed by $U$. This follows from the fact that these are, up to multiplication by a scalar, the only non-vanishing $U\times U$-equivariant morphisms. Thus, the components of $\Inf^*(\nu)$ coincide with the components of $\cP^L\nu \cP$ on the summands indexed by $U$. Now, the assertion of the lemma can be confirmed by following an element $\nu\in \Hom^*(\omega_{\Delta X}^{-1}[-d],\reg_{\Gamma_g})^{\Cent(g)}\cong\HH_*(X^g)^{\Cent(g)}\subset \HH_*([X/G])$ through the isomorphisms of (\ref{decompcomputation}).
\end{proof}
\begin{cor}\label{Infinj}
The pull-back $\Inf^*\colon \HH_*([X/G])\to \HH_*([X/U])$ is injective if and only if all elements of $G\setminus U$ act freely on $X$.
\end{cor}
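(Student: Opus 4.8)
The plan is to read off the kernel of $\Inf^*$ from the explicit description of its components provided by the preceding Lemma, and then to characterise when the relevant summands of $\HH_*([X/G])$ vanish. Using the decomposition (\ref{HHdecomp}) for both stacks, I would fix compatible sets of representatives: a set of representatives of $\conj(G)$ which, for every $G$-conjugacy class meeting $U$ (equivalently, contained in $U$, as $U\lhd G$), is chosen inside $U$ and simultaneously serves as a representative of the corresponding $U$-conjugacy class. This splits the source as
\[
\HH_*([X/G])\cong \Big(\bigoplus_{[u]_G\subset U}\HH_*(X^u)^{C_G(u)}\Big)\oplus\Big(\bigoplus_{[g]_G\subset G\setminus U}\HH_*(X^g)^{C_G(g)}\Big).
\]

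Next I would analyse the two parts separately. By the preceding Lemma the only nonzero components of $\Inf^*$ are the inclusions of invariants $\HH_*(X^u)^{C_G(u)}\hookrightarrow \HH_*(X^u)^{C_U(u)}$ for $u\in U$; these are injective because $C_U(u)=C_G(u)\cap U\subseteq C_G(u)$, so that the $C_G(u)$-invariants form a subspace of the $C_U(u)$-invariants. Since distinct $G$-classes contained in $U$ consist of non-$U$-conjugate elements, the corresponding target summands are distinct, and the restriction of $\Inf^*$ to the first (the ``$U$-indexed'') part is injective. On the other hand, a summand $\HH_*(X^g)^{C_G(g)}$ with $[g]_G\subset G\setminus U$ carries no index $u\in U$, so by the Lemma it maps to zero. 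Consequently
\[
\ker(\Inf^*)=\bigoplus_{[g]_G\subset G\setminus U}\HH_*(X^g)^{C_G(g)},
\]
and $\Inf^*$ is injective if and only if each such summand vanishes.

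Finally I would show that $\HH_*(X^g)^{C_G(g)}=0$ precisely when $g$ acts freely on $X$. If $g$ acts freely then $X^g=\emptyset$ and $\HH_*(X^g)=0$, whence the summand is zero; as freeness is a conjugation-invariant condition, this disposes of the whole class at once. Conversely, if $g$ does not act freely then $X^g\neq\emptyset$, and I would exhibit a nonzero invariant: via the Hochschild--Kostant--Rosenberg isomorphism $H^0(X^g,\reg_{X^g})$ is a graded direct summand of $\HH_*(X^g)$, and $C_G(g)$ (which preserves $X^g$ and respects the Hodge grading) acts on $H^0(X^g,\reg_{X^g})=\C^{c}$ by permuting the $c\ge 1$ connected components; the invariants therefore have dimension equal to the number of $C_G(g)$-orbits of components, which is at least one, so $\HH_*(X^g)^{C_G(g)}\neq 0$. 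Combining the two directions with the previous paragraph yields the claim. The main obstacle is this last nonvanishing statement: one must ensure that passing to $C_G(g)$-invariants does not annihilate all of $\HH_*(X^g)$, and the cleanest way to guarantee this is to locate an invariant class in the lowest Hodge piece $H^0(X^g,\reg_{X^g})$, where the centraliser acts merely by permuting connected components.
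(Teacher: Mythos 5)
Your proof is correct and takes essentially the same route as the paper: use the preceding Lemma to identify $\ker(\Inf^*)$ with the sum of the summands $\HH_*(X^g)^{\Cent(g)}$ for $[g]_G\subset G\setminus U$, note these vanish when the action is free since $X^g=\emptyset$, and use HKR to show they are nonzero otherwise. The only (immaterial) difference is in certifying the nonvanishing: the paper uses $\HH_*(X^g)^{\Cent(g)}\cong \Ho^*(X^g,\C)^{\Cent(g)}\cong \Ho^*(X^g/\Cent(g),\C)\neq 0$, whereas you exhibit an invariant class directly in $\Ho^0(X^g,\reg_{X^g})$ via the permutation action of $\Cent(g)$ on connected components.
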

\begin{proof}
If all $g\in G\setminus U$ act freely, the corresponding fixed point loci $X^g$ are empty and thus all the components $\HH_*(X^g)^{\Cent(g)}$ in the kernel of $\Inf^*$ vanish. Conversely, if $g\in G\setminus U$ does not act freely, by the Hochschild-Kostant-Rosenberg isomorphism \[\HH_*(X^g)^{\Cent(g)}\cong \Ho^*(X^g, \C)^{\Cent(g)}\cong\Ho^*(X^g/\Cent(g),\C)\neq 0\]
(note that the HKR isomorphism is not graded in the usual sense but this does not cause problems since we only need the non-vanishsing).
\end{proof}
\begin{remark}\label{HHremark}
 The case $U=1$ says that $\Inf\colon \HH_*([X/G])\to \HH_*(X)$ is injective if and only if the quotient stack $[X/G]$ agrees with the quotient
variety $X/G$. Note that in this case $\Inf$ corresponds to the pushforward $\pi_*\colon \Db(X)\to \Db(X/G)$ along the quotient morphism $\pi\colon X\to X/G$ under the equivalence $\Db_G(X)\cong \Db(X/G)$. 
\end{remark}
\subsection{Galois coverings induced by characters}
Let $G$ be a finite group acting on a smooth projective variety $X$.
For a character $\rho\in\hat G$, let $U:=U_\rho:=\ker(\rho\colon G\to \C^*)$. Then $U\lhd G$ is a normal subgroup of index $n:=\ord\rho$ and the canonical morphism $[X/U]\to [X/G]$ is the Galois covering induced by the line bundle $\reg_X\otimes \rho\in \Pic([X/G])$.  
\begin{prop}\label{characterauto}
 Let $Y$ be a smooth projective variety and $\Phi\colon \Db([X/G])\to \Db(Y)$ an equivalence. Let $\rho\in \hat G$ be a character, $U=\ker(\rho\colon G\to \C^*)$, and assume that there is an element of $G\setminus U$ that does not act freely on $X$. Then the induced  autoequivalence $\Phi\circ \MM_\rho\circ \Phi^{-1}$ of $\Db(Y)$ is not given by tensor product with a line bundle on $Y$.  
\end{prop}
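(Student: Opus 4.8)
The plan is to exploit the invariance of Hochschild homology under derived equivalence together with the functoriality of the pullback $\Phi^*$ on $\HH_*$, so that the hypothesis on non-free elements becomes an obstruction on the $Y$-side. First I would set $U=\ker(\rho)$ of index $n=\ord\rho$ and observe, via the Galois covering $[X/U]\to [X/G]$ induced by $\reg_X\otimes\rho$, that $\MM_\rho$ is an autoequivalence of order $n$ of $\Db([X/G])$. Conjugating by $\Phi$ transports it to $T:=\Phi\circ\MM_\rho\circ\Phi^{-1}\in\Aut(\Db(Y))$, again of order $n$. The goal is to show $T\neq\MM_L$ for any $L\in\Pic(Y)$.

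The key step is to compare the two functors through their action on $\HH_*$. Since $\Phi$ is an equivalence it induces an isomorphism $\Phi^*\colon\HH_*(Y)\to\HH_*([X/G])$ intertwining the pullbacks, so that the action of $T$ on $\HH_*(Y)$ is conjugate to the action of $\MM_\rho$ on $\HH_*([X/G])$. I would therefore compute the latter using the decomposition (\ref{HHdecomp}): on the summand $\HH_*(X^g)^{\Cent(g)}$ indexed by $g\in\conj(G)$, tensoring by $\rho$ acts as multiplication by the scalar $\rho(g)$ (a well-defined root of unity, constant on the conjugacy class since $\rho$ is a homomorphism to an abelian group). By the hypothesis there is $g\in G\setminus U$ that does not act freely, so $X^g\neq\emptyset$ and, by the HKR argument already used in the proof of Corollary \ref{Infinj}, the corresponding summand $\HH_*(X^g)^{\Cent(g)}\cong\Ho^*(X^g/\Cent(g),\C)$ is nonzero; on this summand $\MM_\rho$ acts by $\rho(g)\neq 1$ because $g\notin U=\ker\rho$. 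Hence the induced action of $\MM_\rho$, and therefore of $T$, on $\HH_*(Y)$ is \emph{not} the identity.

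It then remains to show that any line-bundle twist $\MM_L$ \emph{does} act trivially on $\HH_*(Y)$, which gives the contradiction. For this I would use that the pullback on Hochschild homology depends only on the FM kernel, and that $\MM_L$ has kernel $\Delta_*L$; I expect the action of $\MM_L$ on $\HH_0(Y)=\Ho^0(Y,\reg_Y)\oplus\cdots$ (more precisely on the whole of $\HH_*$, in each Hodge component under HKR) to be trivial, essentially because tensoring the diagonal by a line bundle does not alter the composition (\ref{HHpull}) up to the identity — the twist cancels between the two legs of the Hochschild pairing. Concretely, conjugation by $\MM_L$ fixes $\reg_{\Delta Y}$ and $\Sigma_Y$, and the induced map on $\Hom^*(\Sigma_Y^{-1},\reg_{\Delta Y})$ is the identity. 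Combining the two computations, $T$ acts nontrivially on $\HH_*(Y)$ while every $\MM_L$ acts as the identity, so $T\neq\MM_L$ for all $L\in\Pic(Y)$.

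The main obstacle I anticipate is making the two Hochschild-homology computations fully rigorous and compatible: on the one hand verifying that $\MM_\rho$ really acts by the scalar $\rho(g)$ on the $g$-summand (tracking the character through the chain of isomorphisms in (\ref{decompcomputation}), exactly as the previous lemma tracks the embedding of invariants), and on the other hand confirming that every $\MM_L$ acts trivially on all of $\HH_*(Y)$ (the delicate point being the higher Hodge components, where one must check the line bundle genuinely cancels in the pairing rather than merely on $\HH_0$). Once these two facts are in place, the conclusion is immediate from the derived-invariance of the $\HH_*$-action.
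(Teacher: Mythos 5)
Your overall strategy differs genuinely from the paper's. The paper never computes the action of either autoequivalence on Hochschild homology: assuming $\Phi\circ\MM_\rho\circ\Phi^{-1}\cong\MM_L$, it lifts $\Phi$ along the two Galois covers $[X/U]\to[X/G]$ and $Y_L\to Y$ (Proposition \ref{liftthm} plus Remark \ref{simplecyclic}, using that $\langle\rho\rangle$ and $\langle L\rangle$ are cyclic and the kernel of an equivalence is simple), and then contradicts functoriality of $\HH_*$-pull-backs, since $\Inf^*$ is not injective (Corollary \ref{Infinj}) while $(\pi_*)^*$ is (Remark \ref{HHremark}). Your route instead compares eigenvalues of the two conjugation actions. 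Your first claim --- that $\MM_\rho$ acts on the summand $\HH_*(X^g)^{\Cent(g)}$ by the scalar $\rho(g)$ --- is true, but it is nowhere in the paper and requires a computation of its own (tracking the twisted linearisation of the kernel $\oplus_g\reg_{\Gamma_g}$ through (\ref{decompcomputation})), comparable in length to the paper's lemma on $\Inf^*$; you correctly flag this as outstanding work.

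The genuine gap is your second claim: it is \emph{false} that every twist $\MM_L$, $L\in\Pic(Y)$, acts trivially on $\HH_*(Y)$. The pull-back along the kernel $\Delta_*L$ is, under the (twisted) HKR identification, multiplication by $\ch(L)$, equivalently by the exponential of the Atiyah class of $L$; it is non-trivial whenever $c_1(L)\neq 0$ in rational cohomology. For instance, for $L$ ample on an elliptic curve it sends $1\in H^0(Y,\reg_Y)\subset\HH_0(Y)$ to $1+c_1(L)\neq 1$. Your cancellation heuristic is exactly where this breaks: conjugation by $\MM_L$ does fix the objects $\reg_{\Delta Y}$ and $\Sigma_Y^{-1}$ up to canonical isomorphism, because $\pr_1^*L\otimes\pr_2^*L^{-1}$ is canonically trivial \emph{on} the diagonal; but elements of $\Hom^*(\Sigma_Y^{-1},\reg_{\Delta Y})$ live on the formal neighbourhood of the diagonal, where that line bundle is trivial only if the Atiyah class of $L$ vanishes, and the induced map on this Hom-space is precisely multiplication by $\exp$ of that class, not the identity. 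The gap is repairable: if $T=\Phi\circ\MM_\rho\circ\Phi^{-1}\cong\MM_L$, then $\MM_L$ has finite order, so $L$ is torsion, hence $c_1(L)$ is torsion and the Atiyah class vanishes, and only in this case is the action trivial. Alternatively, for torsion $L$ one can bypass Chern-character technology using the paper's own tools: the \'etale cover $\pi\colon Y_L\to Y$ satisfies $\MM_L\circ\pi_*\cong\pi_*$ by the projection formula, so $(\pi_*)^*\circ(\MM_L)^*=(\pi_*)^*$ by functoriality \cite{CaWi}, and injectivity of $(\pi_*)^*$ (Remark \ref{HHremark}) forces $(\MM_L)^*=\id$. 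With these two repairs --- restricting to torsion $L$ and proving both eigenvalue statements --- your argument goes through, but as written the key step would fail.
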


\begin{proof}
Assume that $\Phi\circ \MM_\rho\circ \Phi^{-1}\cong \MM_L$ for some line bundle $L\in\Pic(Y)$ and let $\pi\colon Y_L\to Y$ be the associated Galois cover. Clearly, $\ord(L)=\ord(\rho)$ so that there is the isomorphism $\mu\colon \langle \rho\rangle\cong \langle L\rangle$. 
Our assumption says that the equivalence $\Phi$ is $\mu$-equivariant.
Thus, by Proposition \ref{liftthm} it lifts to an equivalence $\Db([X/U])\cong \Db(Y_L)$ which means that there is a commutative diagram

\[\label{HHdiag}\begin{xy}\xymatrix{
\Db([X/U])\ar[r]^\cong\ar[d]_\Inf & \Db(Y_L)\ar[d]^{\pi_*} \\
\Db([X/G]) \ar[r]^\cong & \Db(Y).}
\end{xy} 
\]
Corollary \ref{Infinj} says that $\Inf^*\colon \HH^*([X/G])\to \HH^*([X/U])$ is not injective while $(\pi_*)^*\colon \HH^*(Y)\to\HH^*(Y_L)$ is injective by Remark \ref{HHremark}. Considering the commutative diagram (\ref{HHdiag}), this contradicts the functoriality of the pull-back in Hochschild homology; see \cite[Thm.\ 6]{CaWi}.  
\end{proof}
\begin{remark}
When the action of $G$ on $X$ satisfies the assumptions of the Bridgeland-King-Reid theorem \cite{BKR}, the above yields non-standard autoequivalences of the crepant resolution $Y=\Hilb^G(X)$ of the quotient variety $X/G$. Indeed, let $\rho\in \hat G$ be a character and let $F_\rho:= \Phi\circ \MM_\rho\circ \Phi^{-1}$ where $\Phi\colon\Db([X/G])\cong \Db(Y)$ is the BKR equivalence. Assume that $F_\rho$ is standard, that is $F_\rho=\MM_L\circ\phi_*[m]$ for some $L\in \Pic(Y)$, $\phi\in\Aut(Y)$, and $m\in\Z$. There is an open subset $U\subset Y$ such that skyscraper sheaves of points on $U$ correspond under $\Phi$ to skyscraper sheaves of free orbits. Since skyscraper sheaves of free orbits lie in the image of the inflation functor, they are invariant under $\MM_\rho$. It follows that $F_\rho(\C(u))=\C(u)$ for $u\in U$ which shows that $\phi=\id$ and $m=0$. Thus, we are left with $F_\rho=\MM_L$ which is ruled out (in the case that $G\setminus U_\rho$ does not act trivially) by 
the above proposition.

The above also works in the more general setting of $G$-constellations, see, for instance, \cite{Craw-Ishii} for this notion.
\end{remark}
\subsection{Stacks with characters as canonical bundles}
\begin{prop}\label{noFMpartners}
Let $X$ be a smooth projective variety with trivial canonical bundle and $G\subset \Aut(X)$ a finite subgroup such that the canonical bundle is not trivial as a $G$-bundle, that is, $\omega_{[X/G]}\cong\reg_X\otimes \rho$ for some non-trivial character $\rho$. Then there is no smooth projective variety $Y$ such that $\Db([X/G])\cong \Db(Y)$. 
\end{prop}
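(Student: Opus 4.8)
The plan is to derive a contradiction from an assumed equivalence by comparing Serre functors and then invoking Proposition \ref{characterauto}. Suppose there were a smooth projective variety $Y$ admitting an equivalence $\Phi\colon \Db([X/G])\to \Db(Y)$. Since $\omega_X\cong \reg_X$, tensoring by $\omega_{[X/G]}\cong \reg_X\otimes \rho$ is exactly the functor $\MM_\rho$, the tensor product by the character $\rho$ in the equivariant category. Hence the Serre functor of $[X/G]$ is $S_{[X/G]}\cong \MM_\rho[d]$ with $d=\dim X$, whereas that of $Y$ is $S_Y\cong \MM_{\omega_Y}[\dim Y]$.

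First I would use that every equivalence commutes with the Serre functors (\cite[Prop.\ 1.30]{Huy-book}), which gives $\Phi\circ \MM_\rho[d]\cong \MM_{\omega_Y}[\dim Y]\circ \Phi$ and hence $\Phi\circ \MM_\rho\circ \Phi^{-1}\cong \MM_{\omega_Y}[\dim Y-d]$. Now $\MM_\rho$ has finite order $n=\ord(\rho)>1$, since $(\MM_\rho)^k=\MM_{\rho^k}$ is the identity precisely when $n\mid k$; therefore its conjugate on the right-hand side has finite order as well. Because $(\MM_{\omega_Y}[k])^m\cong \MM_{\omega_Y^m}[mk]$ can only be the identity when $mk=0$, a nonzero shift $k=\dim Y-d$ would force infinite order. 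I conclude that $\dim Y=d$ and that $\Phi\circ \MM_\rho\circ \Phi^{-1}\cong \MM_{\omega_Y}$; in particular this autoequivalence of $\Db(Y)$ is given by tensor product with a line bundle.

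This contradicts Proposition \ref{characterauto} applied to the character $\rho$ with $U=\ker(\rho)$, completing the argument. The only input of that proposition that remains to be secured is its hypothesis that some element of $G\setminus U$ does not act freely on $X$, and I expect this to be the main obstacle. It is the genuinely stacky ingredient—feeding, through Corollary \ref{Infinj}, the non-injectivity of $\Inf^*$ on Hochschild homology that powers Proposition \ref{characterauto}—and it is truly necessary rather than a technicality: when $G\setminus U$ acts freely the stack $[X/G]$ is itself a smooth variety (as for the Enriques surface obtained from a fixed-point-free involution on a K3 surface negating the holomorphic $2$-form, where $\rho$ is nevertheless nontrivial), so that $[X/G]$ is its own variety partner and no contradiction should arise. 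I would therefore record this non-free condition as a standing assumption, after which the proof reduces to the short Serre-functor computation above followed by the citation of Proposition \ref{characterauto}.
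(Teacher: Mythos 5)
Your core argument is the same as the paper's: the paper's entire proof consists of the observation that any equivalence commutes with shifts and Serre functors, hence $\Phi\circ \MM_\rho\circ \Phi^{-1}\cong \MM_{\omega_Y}$, which is then declared to contradict Proposition \ref{characterauto}. Your more detailed Serre-functor computation, including the argument that the shift $\dim Y-\dim X$ must vanish because $\MM_\rho$ has finite order, is a correct elaboration of that one sentence.

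The important point is the one you flag at the end, and it is not a mere technicality: Proposition \ref{characterauto} carries the hypothesis that some element of $G\setminus U$, $U=\ker\rho$, does not act freely on $X$, and this hypothesis is neither stated in Proposition \ref{noFMpartners} nor implied by its assumptions. The paper's proof invokes Proposition \ref{characterauto} without verifying it, so you have in fact located a genuine gap in the paper's own proof; worse, the statement is false without the extra assumption. As you observe, a K3 surface $X$ with an Enriques involution $\tau$ (fixed point free, $\tau^*\sigma=-\sigma$) satisfies every stated hypothesis: $\omega_X\cong\reg_X$ and $\omega_{[X/\tau]}\cong\reg_X\otimes\rho$ with $\rho$ the nontrivial character; yet $[X/\tau]$ \emph{is} the Enriques surface, a smooth projective variety, so it is trivially derived equivalent to one. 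Your proposed repair, adding the non-freeness of some element of $G\setminus U$ as a standing hypothesis, is exactly what is needed; with it, your proof (and the paper's) is complete. Note that the paper's subsequent applications survive this correction: for $\cS^nX=[X^n/\sym_n]$ and for the generalised Kummer stacks with $\rho=\alt$, the transpositions lie outside $U=\ker(\alt)$ and fix the (nonempty) partial diagonals, so the corrected hypothesis holds there. One small inaccuracy in your discussion: freeness of the action of $G\setminus U$ alone does not make $[X/G]$ a variety, since elements of $U$ may still have fixed points; this inference is valid in your example only because there $U=1$, which is all you need for the counterexample.
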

\begin{proof}
Assume that there exists a smooth projective variety $Y$ and an equivalence $\Phi\colon \Db([X/G])\cong \Db(Y)$. 
Since any equivalence commutes with shifts and Serre functors, we have
$\Phi\circ \MM_\rho\circ \Phi^{-1}\cong \MM_{\omega_Y}$ which contradicts Proposition \ref{characterauto}.
\end{proof}

\begin{remark}
 In some very weak sense, this can be seen as a converse of the Bridgeland-King-Reid theorem \cite{BKR} which gives sufficient conditions for $[X/G]$ to be derived equivalent to a special smooth variety, namely the Nakamura G-Hilbert scheme. One of the two conditions is that $\omega_X$ is locally trivial as a $G$-bundle. The above says that, under the additional assumption that $\omega_X$ is trivial as a non-equivariant bundle, this condition is also neccesary for $[X/G]$ to be derived equivalent to any smooth variety. 
\end{remark}

\subsection{Symmetric quotients and generalised Kummer stacks}
We introduce our two main example series. Let $X$ be any smooth projective variety and $n\ge 2$. The symmetric group $\sym_n$ acts on $X^n$ by permutation of the factors. We call the corresponding global quotient stack $\cS^nX:=[X^n/\sym_n]$ the \textit{symmetric quotient stack}. The most relevant case is the one when $X$ is a surface, where $\cS^nX$ is derived equivalent to the Hilbert scheme $X^{[n]}$ of $n$ points on $X$ by \cite{BKR} and \cite{Haiman}; see Subsection \ref{BKRsect} for details.
 
In the case that $X=A$ is an abelian variety, $A^n$ contains the $\sym_n$-invariant subvariety
\[
N_{n-1}A:=\{(a_1,\ldots,a_n)\mid a_1+\ldots +a_n=0\}\cong A^{n-1}. 
\]
We call $\cK_{n-1}A:=[N_{n-1}A/\sym_n]$ the \textit{generalised Kummer stack}. The reason for the name is that in the case that $A$ is an abelian surface, $\cK_{n-1}A$ is derived equivalent to the generalised Kummer variety $K_{n-1}A\subset A^{[n]}$; see \cite[Lem.\ 6.2]{Meachan}. 

If $\omega_X$ is trivial, the canonical bundle of the product $\omega_{X^n}\cong \omega_X^{\boxtimes n}$ is trivial too. Similarly, for any abelian variety $A$, the canonical bundle of $N_{n-1}A$ is trivial. 

There is the following fundamental difference depending on the parity of the dimensions of $X$ and $A$. Namely, in the even dimensional case also the equivariant canonical bundles $\omega_{\cS^nX}\in \Coh(\cS^nX)\cong \Coh_{\sym_n}(X^n)$ and $\omega_{\cK_{n-1}A}\in \Coh(\cK_{n-1}A)\cong \Coh_{\sym_n}(N_{n-1}A)$ are trivial. 

But in the odd dimensional case we have

\begin{lemma}\label{oddsign}
If $\dim X$ and $\dim A$ are odd, we have $\omega_{\cS^nX}\cong \reg_{X^n}\otimes \alt$ and $\omega_{[N_{n-1}A/\sym_n]}\cong \reg_{N_{n-1}A}\otimes \alt$ where $\alt$ denotes the sign representation of $\sym_n$, that is, the unique non-trivial character. 
\end{lemma}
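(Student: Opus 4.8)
The plan is to treat both statements uniformly: in each case we have a smooth connected variety $Z$ (namely $Z=X^n$, respectively $Z=N_{n-1}A$) carrying a $\sym_n$-action and a trivial canonical bundle, so that $H^0(Z,\omega_Z)$ is one-dimensional. The natural (pullback-of-top-forms) linearisation of $\omega_Z$ makes $\sym_n$ act on this line by a one-dimensional character $\chi\colon\sym_n\to\C^*$, and since the only such characters are $\triv$ and $\alt$, it suffices to show $\chi(\tau)=-1$ for a single transposition, say $\tau=(1\,2)$. Thus the whole lemma reduces to two sign computations, and the appearance of $\alt$ is forced precisely by the parity identity $d^2\equiv d\pmod 2$.

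For the symmetric quotient, fix a nowhere-vanishing $d$-form $s$ trivialising $\omega_X$ (here $d=\dim X$), which exists because $\omega_X\cong\reg_X$. Then $t:=\pr_1^*s\wedge\cdots\wedge\pr_n^*s$ spans $H^0(X^n,\omega_{X^n})$, and the linearisation sends $t$ to its pullback under the permutation automorphism. Since $\tau=(1\,2)$ merely interchanges the two $d$-forms $\pr_1^*s$ and $\pr_2^*s$, graded-commutativity of the wedge product gives $\tau^*t=(-1)^{d^2}t=(-1)^d\,t$. As $d$ is odd this equals $-t$, so $\chi(\tau)=-1$ and hence $\chi=\alt$, proving $\omega_{\cS^nX}\cong\reg_{X^n}\otimes\alt$. (For even $d$ the same computation yields $+1$, consistent with the triviality asserted in the even-dimensional case.)

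For the generalised Kummer stack I would exploit that the tangent bundle of the abelian variety $A$ is trivial. Writing $V:=\operatorname{Lie}(A)$ with $d:=\dim A=\dim V$, there is a $\sym_n$-equivariant isomorphism $\Omega_{A^n}\cong\reg_{A^n}\otimes_\C (V\dual)^{\oplus n}$, where $\sym_n$ permutes the $n$ summands and acts trivially on $V$. The sum map $A^n\to A$ is $\sym_n$-invariant, and its derivative identifies the (trivial) cotangent bundle of $N:=N_{n-1}A$ with $\reg_N\otimes_\C W\dual$, where $W=\ker\bigl(V^{\oplus n}\xrightarrow{\mathrm{sum}}V\bigr)$. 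Decomposing the permutation representation $\C^n=\triv\oplus\std$ shows $W\cong V\otimes_\C\std$ as $\sym_n$-representations, so that $H^0(N,\omega_N)\cong\det(W)\dual$ and $\chi=\det(W)^{\pm1}$. Using $\det(V\otimes\std)=(\det V)^{\otimes(n-1)}\otimes(\det\std)^{\otimes d}$ together with $\det V=\triv$ (trivial action) and $\det\std=\alt$, we obtain $\chi=\alt^{\,d}=\alt$ since $d$ is odd, which gives $\omega_{[N_{n-1}A/\sym_n]}\cong\reg_{N_{n-1}A}\otimes\alt$.

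The routine but error-prone part, and the main thing to get right, is the sign bookkeeping: checking that the linearisation defining $\omega_{\cS^nX}$ and $\omega_{\cK_{n-1}A}$ is exactly the pullback-of-forms action, that reordering $n$ forms of degree $d$ contributes $\sgn$ raised to a power congruent to $d$ modulo $2$, and that in the Kummer case $W$ is correctly identified with $V\otimes\std$ so that the determinant character comes out as $\alt^{\,d}$. Everything else is formal, and it is the parity $d^2\equiv d\pmod 2$ that makes the sign character appear precisely in the odd-dimensional situation.
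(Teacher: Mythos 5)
Your proof is correct, and it rests on the same representation-theoretic core as the paper's proof -- namely that the determinant of the permutation representation $\C^{\{1,\ldots,n\}}$, equivalently of the standard representation $\rho_n$, is $\alt$, combined with the parity $d^2\equiv d\pmod 2$ -- but the reduction to that core is genuinely different. The paper invokes Lemma \ref{simple}: since $\omega_{X^n}$ is simple there are exactly two linearisations, so the linearisation is pinned down by the $\sym_n$-representation on the fibre at a fixed point $p=(x,\ldots,x)$ on the small diagonal, where $\Omega_{X^n}(p)\cong(\C^{\{1,\ldots,n\}})^{\oplus d}$ (resp.\ $\Omega_{N_{n-1}A}(p)\cong\rho_n^{\oplus d}$), giving fibre $\alt^{\otimes d}$. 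You never restrict to a fixed point: for $X^n$ you compute the action on the one-dimensional space $\Ho^0(X^n,\omega_{X^n})$ via the explicit trivialising section $\pr_1^*s\wedge\cdots\wedge\pr_n^*s$ and the Koszul sign $(-1)^{d^2}$, and for $N_{n-1}A$ you identify $\Omega_{N_{n-1}A}$ globally and $\sym_n$-equivariantly as $\reg\otimes(V\otimes\std)\dual$ via the conormal sequence of the ($\sym_n$-invariant) sum map, then take determinants. Each route has its merits: the paper's fixed-point evaluation is shorter and handles both cases uniformly, at the price of citing the classification of linearisations of a simple object from \cite{Ploog-equiv}; yours is more self-contained -- a nowhere-vanishing $\chi$-eigensection of $\omega_{X^n}$ directly yields an equivariant isomorphism $\reg_{X^n}\otimes\chi\cong\omega_{X^n}$, with no classification lemma needed -- and makes the sign bookkeeping fully explicit, while your global identification of $\Omega_{N_{n-1}A}$ restricts at $p=(0,\ldots,0)$ to exactly the paper's fibre statement. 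Both of your sign computations check out.
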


\begin{proof}
By Lemma \ref{simple} there are only two possible linearisations of $\omega_{X^n}$ so that the linearisation of the canonical bundle is determined by the $\sym_n$-action on the fibre $\omega_{X^n}(p)$ over a point $p=(x,\ldots,x)$ on the small diagonal. We denote by $\C^{\{1,\ldots,n\}}$ the permutation representation of $\sym_n$ and by $\rho_n$ the standard representation, that is the quotient of $\C^{\{1,\ldots,n\}}$ by the one dimensional invariant subrepresentation, so that $\C^{\{1,\ldots,n\}}\cong \rho_n\oplus \C$. 
We have $\det \C^{\{1,\ldots,n\}}\cong \det \rho_n\cong \alt$; see \cite[Prop.\ 2.12]{FulHar}.
Furthermore, $\Omega_{X^n}(p)\cong(\C^{\{1,\ldots,n\}})^{\oplus d}$ as $\sym_n$-representations where $d=\dim X$. Thus, $\omega_{X^n}(p)\cong \det \Omega_{X^n}(p)\cong \alt^{\otimes d}$. Similarly, $\Omega_{N_{n-1}A}(p)\cong \rho_n^{\oplus d}$ and thus also $\omega_{N_{n-1}A}(p)\cong \alt^{\otimes d}$ for $d=\dim A$.
\end{proof}  
\begin{prop}
\begin{enumerate}
\item
Let $X$ be a smooth projective variety of odd dimension with trivial canonical bundle and $n\ge 2$. Then the symmetric quotient stack $\cS^nX=[X/\sym_n]$ is not derived equivalent to any smooth projective variety.
\item
Let $A$ be an abelian variety of odd dimension and $n\ge 2$. Then the generalised Kummer stack $\cK_{n-1}A=[N_{n-1}A/\sym_n]$ is not derived equivalent to any smooth projective variety.
\end{enumerate}
\end{prop}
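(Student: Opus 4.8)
The plan is to obtain both statements as immediate applications of Proposition \ref{noFMpartners}, with the hypotheses of that proposition verified via Lemma \ref{oddsign}. In each case the group is $G=\sym_n$ acting by permutation of factors; since $n\ge 2$ and the underlying variety is of positive (odd) dimension, this action is faithful, so $\sym_n$ embeds as a finite subgroup of the relevant automorphism group and the proposition is applicable.

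For part (i) I would apply Proposition \ref{noFMpartners} to the variety $X^n$ with its permutation $\sym_n$-action. The first hypothesis, that the ordinary canonical bundle be trivial, holds because $\omega_{X^n}\cong\omega_X^{\boxtimes n}$ is trivial whenever $\omega_X$ is. For the second hypothesis, Lemma \ref{oddsign} shows that in the odd-dimensional case $\omega_{\cS^nX}\cong\reg_{X^n}\otimes\alt$, so the equivariant canonical bundle is the trivial line bundle twisted by the sign character $\alt$, which is non-trivial for $n\ge 2$. Proposition \ref{noFMpartners} with $\rho=\alt$ then yields that $\cS^nX$ is not derived equivalent to any smooth projective variety.

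Part (ii) is entirely parallel, replacing $X^n$ by the abelian subvariety $N_{n-1}A\cong A^{n-1}$ endowed with the restricted permutation action. Its canonical bundle is trivial since $N_{n-1}A$ is an abelian variety, and Lemma \ref{oddsign} again provides $\omega_{\cK_{n-1}A}\cong\reg_{N_{n-1}A}\otimes\alt$ in odd dimension; Proposition \ref{noFMpartners} with $\rho=\alt$ finishes the proof.

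I expect no serious obstacle here, because the substantive work already resides in the two cited results: Lemma \ref{oddsign} pins down the equivariant canonical bundle by computing the $\sym_n$-action on the fibre over the small diagonal through the determinant of the permutation (respectively standard) representation, while Proposition \ref{noFMpartners} uses a Hochschild-homology argument to exclude a smooth variety partner whenever the equivariant canonical bundle is a non-trivial character twist. The remaining task is purely bookkeeping: checking that the non-equivariant canonical bundle is trivial, that $\alt$ is genuinely non-trivial, and that the permutation action is faithful so that $\sym_n$ indeed sits inside the appropriate automorphism group.
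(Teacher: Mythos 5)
Your proposal is correct and matches the paper's proof exactly: the paper also deduces both parts immediately from Proposition \ref{noFMpartners} combined with Lemma \ref{oddsign}. The extra verifications you spell out (triviality of $\omega_{X^n}$ and $\omega_{N_{n-1}A}$, non-triviality of $\alt$, faithfulness of the permutation action) are exactly the implicit bookkeeping the paper leaves to the reader.
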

 
\begin{proof}
This follows from Proposition \ref{noFMpartners} together with Lemma \ref{oddsign}.
\end{proof} 
\subsection{Kummer stacks}
Let us consider the generalised Kummer stack in the case $n=2$. Then $N_1A\subset A^2$ is just the anti-diagonal and thus isomorphic to $A$. Under the isomorphism $N_1A\cong A$ the non-trivial element of $\sym_2$ acts by $\iota\colon A\to A$, $x\mapsto -x$.
The global quotient stack $[A/\iota]\cong \cK_1A$ is called the \textit{Kummer stack} associated to $A$. Note that if $A$ is an abelian surface, there is the derived McKay correspondence $\Db_\iota([A])\cong \Db([A/\iota])\cong \Db(K(A))$ where $K(A)$ is the Kummer K3 surface associated to $A$. It is proven in \cite{Ste} that for every derived equivalence $\Db(A)\cong \Db(B)$ between abelian varieties there is a derived equivalence $\Db([A/\iota])\cong \Db([B/\iota])$. In the case that $\dim A=\dim B$ is odd, also the converse holds:

\begin{prop}
 Let $A$ and $B$ be abelian varieties of odd dimension. Then $\Db([A/\iota])\cong \Db([B/\iota])$ implies $\Db(A)\cong \Db(B)$. 
\end{prop}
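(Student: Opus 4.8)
The plan is to exhibit the statement as an instance of lift along Galois covers, using Proposition~\ref{liftthm} together with Remark~\ref{simplecyclic}. The quotient maps $A\to[A/\iota]$ and $B\to[B/\iota]$ are Galois covers whose groups of deck transformations are $H=H'=\langle\iota\rangle\cong\Z/2\Z$, hence cyclic, and so are the dual groups $\hat H\cong\hat H'\cong\Z/2\Z$. Any equivalence $\Phi\colon\Db([A/\iota])\to\Db([B/\iota])$ satisfies condition~(\ref{FMcond})---for coherent $A,B$ one has $\Hom(\Phi A,\Phi B[j])\cong\Ext^{j}(A,B)=0$ whenever $j<0$---so $\Phi$ is a Fourier--Mukai transform $\FM_P$ by \cite[Rem.~4.1]{CStwisted}, and its kernel $P$ is simple by formula~(\ref{RFformula}). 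Since $\hat H$ is cyclic, Remark~\ref{simplecyclic} reduces the hypothesis of Proposition~\ref{liftthm} to verifying that $\Phi$ is $\mu$-equivariant for the canonical isomorphism $\mu\colon\hat H\cong\hat H'$.

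The crux is to make the $\hat H$-action concrete and to identify it with the Serre functor. By Proposition~\ref{abdual} the group $\hat H\cong\Z/2\Z$ acts on $\Db([A/\iota])$ by tensoring with characters of $\langle\iota\rangle$, so its nontrivial element acts as $\MM_\alt$, tensoring with the sign character $\alt$. The key geometric input is Lemma~\ref{oddsign} in the case $n=2$: writing $[A/\iota]\cong\cK_1A=[N_1A/\sym_2]$ and using that $\dim A$ is odd, the equivariant canonical bundle is $\omega_{[A/\iota]}\cong\reg_A\otimes\alt$. Consequently $\MM_\alt$ agrees, up to shift, with the Serre functor: $S_{[A/\iota]}\cong\MM_\alt\circ[\dim A]$. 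This is exactly where odd-dimensionality is used, and it is what breaks down in even dimension, where $\omega_{[A/\iota]}$ is the trivial equivariant bundle.

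From here I would deduce $\mu$-equivariance from the fact that equivalences commute with Serre functors and with shifts. First one pins down the dimensions: since $\alt^{\otimes2}\cong\reg_A$, squaring the Serre functor gives $S_{[A/\iota]}^{2}\cong[2\dim A]$ and likewise $S_{[B/\iota]}^{2}\cong[2\dim B]$; as $\Phi$ intertwines the two Serre functors and commutes with shifts, it forces $\dim A=\dim B=:d$. Writing $\MM_\alt\cong S_{[A/\iota]}\circ[-d]$ and, on the target, $\MM_\alt\cong S_{[B/\iota]}\circ[-d]$, and invoking $\Phi\circ S_{[A/\iota]}\cong S_{[B/\iota]}\circ\Phi$ together with the commutation of $\Phi$ with shifts, one obtains $\Phi\circ\MM_\alt\cong\MM_\alt\circ\Phi$, which is precisely the required $\mu$-equivariance. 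Proposition~\ref{liftthm} (in the cyclic case via Remark~\ref{simplecyclic}) then produces a lift $\Psi\colon\Db(A)\to\Db(B)$, and since $\Phi$ is an equivalence, so is $\Psi$; this is the sought equivalence $\Db(A)\cong\Db(B)$.

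The main obstacle I anticipate is bookkeeping rather than a deep difficulty: one must check that the single character $\alt$ which governs the cover $A\to[A/\iota]$ (via $\hat H=\ker(\pi^{*})$) is the very same character that appears as the equivariant canonical class in Lemma~\ref{oddsign}, so that the Serre-functor commutation delivers the equivariance in the precise form demanded by Proposition~\ref{liftthm}. Everything else is a formal consequence of the lift/descent formalism already established.
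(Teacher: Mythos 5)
Your proof is correct and follows essentially the same route as the paper: the paper simply observes that $\omega_{[A/\iota]}\cong\reg_A\otimes\alt$ has order $2$ (Lemma \ref{oddsign}), identifies $A\to[A/\iota]$ as the canonical cover, and invokes Corollary \ref{cancov}, whose proof is exactly your argument (Serre functor commutation gives $\mu$-equivariance, then Proposition \ref{liftthm} and Remark \ref{simplecyclic}). You have merely unpacked that corollary rather than citing it, so there is no substantive difference.
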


\begin{proof}
As discussed in the previous subsection, the canonical bundle of $[A/\iota]$ is torsion of order 2 and the canonical cover is $\pi\colon A\to [A/\iota]$. Hence, this is just a special case of Corollary \ref{cancov}.
\end{proof}
\subsection{Enriques quotients of Hilbert schemes}\label{BKRsect}
Let $X$ be a smooth projective surface and $n\in \N$.
Recall that there is the Bridgeland-King-Reid, Haiman (in the following abbreviated as BKR-H) equivalence 
\[
 \Phi=\FM_{\reg_{I^nX}}\colon \Db(X^{[n]})\xrightarrow\cong  \Db_{\sym_n}(X^n)\cong \Db(\cS^nX)
\]
where $I^nX=(X^{[n]}\times_{S^nX}X^n)_{\mathsf{red}}\subset X^{[n]}\times X^n$ is the \textit{isospectral Hilbert scheme}; see \cite{BKR} and \cite{Haiman}. The fibre product is given by the quotient morphism $\pi\colon X^n\to S^nX:=X^n/\sym_n$ and the Hilbert--Chow morphism $\HC\colon X^{[n]}\to S^nX$. 

Any automorphism $\phi\in \Aut(X)$ induces automorphisms $\phi^{[n]}\in \Aut(X^{[n]})$, $\phi^{\times n}\in \Aut(X^n)$, and $S^n\phi\in \Aut(S^nX)$. The morphisms $\HC$ and $\pi$ commute with these induced automorphisms. It follows that $I^nX$ is invariant under $\phi^{[n]}\times\phi^{\times n}$. 

Let $\phi$ be of finite order. Then $\phi^{[n]}$ and $\phi^{\times n}$ are of the same order and there is the isomorphism $\mu\colon \langle\phi^{[n]}\rangle\cong \langle\phi^{\times n}\rangle$ given by sending one generator to the other. Since $I^nX$ is invariant under $\phi^{[n]}\times \phi^{\times n}$, the object $\reg_{I^nX}$ carries a canonical $\mu$-linearisation. Thus, Proposition \ref{descentthm} gives

\begin{cor}
The BKR-H equivalence $\Phi\colon  \Db(X^{[n]})\xrightarrow\cong  \Db(\cS^nX)$ descends to
an equivalence
\[
\check \Phi\colon \Db([X^{[n]}/\phi^{[n]}])\xrightarrow\cong \Db([\cS^nX/\phi^{\times n}]) 
\]
between the derived categories of the quotient stacks.\qqed
\end{cor}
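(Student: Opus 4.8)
The plan is to realise the statement as a direct instance of Proposition \ref{descentthm}. First I would set up the two Galois covers. On the source side, $X^{[n]}\to [X^{[n]}/\phi^{[n]}]$ is a Galois cover with group of deck transformations $H=\langle\phi^{[n]}\rangle$. On the target side, I would observe that $\phi^{\times n}$ commutes with the $\sym_n$-action on $X^n$ (applying $\phi$ to each factor commutes with permuting the factors) and hence descends to an automorphism of the stack $\cS^nX=[X^n/\sym_n]$; thus $\langle\phi^{\times n}\rangle\subset\Aut(\cS^nX)$ is a legitimate finite subgroup, $[\cS^nX/\phi^{\times n}]$ is again a smooth projective stack in our sense, and $\cS^nX\to [\cS^nX/\phi^{\times n}]$ is a Galois cover with group $H'=\langle\phi^{\times n}\rangle$. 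Since $\phi^{[n]}$ and $\phi^{\times n}$ have the same finite order, the assignment $\phi^{[n]}\mapsto\phi^{\times n}$ defines an isomorphism $\mu\colon H\cong H'$; as both groups lie inside automorphism groups, $\mu$ is automatically a $c$-isomorphism by Definition \ref{mu-linearised}.

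The key input is the $\mu$-linearisability of the kernel $P=\reg_{I^nX}$. Because $\HC$ and $\pi$ commute with the automorphisms induced by $\phi$, the isospectral Hilbert scheme $I^nX\subset X^{[n]}\times X^n$ is invariant under $g:=\phi^{[n]}\times\phi^{\times n}$. Since $H\subset\Aut$ one has $\bar h=h$ for all $h\in H$, so the $\mu$-action of the generator on $\Coh(X^{[n]}\times\cS^nX)$ is exactly $g^*$; hence the invariance of $I^nX$ under $g$ says precisely that $\reg_{I^nX}$ is $\mu$-invariant. As the structure sheaf of a $g$-invariant reduced subscheme, $\reg_{I^nX}$ carries the tautological equivariant structure in which each $\lambda_{g^k}$ is the comparison isomorphism $\reg_{I^nX}\cong (g^k)^*\reg_{I^nX}$ coming from $g^k(I^nX)=I^nX$; these compose correctly and yield a genuine $\mu$-linearisation. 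Alternatively, since $H$ is cyclic and $\reg_{I^nX}$ is simple (being the kernel of an equivalence, it is simple by formula (\ref{RFformula})), $\mu$-invariance already forces $\mu$-linearisability by Lemma \ref{cyclic-inv-equiv}(ii) together with Remark \ref{cysimple}.

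With the hypotheses of Proposition \ref{descentthm} verified, that proposition produces a $\hat\mu$-equivariant descent $\check\Phi\colon \Db([X^{[n]}/\phi^{[n]}])\to \Db([\cS^nX/\phi^{\times n}])$ of $\Phi=\FM_{\reg_{I^nX}}$; and since $\Phi$ is an equivalence, the equivalence clause of Proposition \ref{descentthm} guarantees that $\check\Phi$ is an equivalence as well. The only point requiring genuine care is the bookkeeping in the second paragraph, namely checking that the \emph{canonical} geometric linearisation of $\reg_{I^nX}$ is a linearisation with respect to the twisted action $(\bar h\times\mu(h))^*$ rather than the naive diagonal one. This is immediate once one notes that $\bar h=h$ for automorphisms, so no inversion subtlety arises; the remainder is a routine unwinding of the definitions.
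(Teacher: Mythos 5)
Your proof is correct and takes essentially the same route as the paper: the invariance of the isospectral Hilbert scheme $I^nX$ under $\phi^{[n]}\times\phi^{\times n}$ (coming from the compatibility of $\HC$ and $\pi$ with the induced automorphisms) endows the kernel $\reg_{I^nX}$ with a canonical $\mu$-linearisation for $\mu\colon\langle\phi^{[n]}\rangle\cong\langle\phi^{\times n}\rangle$, and Proposition \ref{descentthm} then produces the equivalence between the derived categories of the quotient stacks. Your additional remarks --- that $\bar h=h$ for automorphisms so no inversion subtlety arises, and the alternative argument via simplicity of the kernel and cyclicity of the group using Lemma \ref{cyclic-inv-equiv} and Remark \ref{simplecyclic} --- only spell out details the paper leaves implicit.
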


Another important functor occurring in the setup of Hilbert schemes of points on surfaces is the Fourier-Mukai transform $F_n=\FM_{\cI_\Xi}\colon \Db(X)\to \Db(X^{[n]})$ along the universal ideal sheaf $\cI_{\Xi}$. Clearly, the universal family $\Xi\subset X\times X^{[n]}$ is invariant under $\phi\times \phi^{[n]}$. Hence, $\cI_\Xi\in \Db(X\times X^{[n]})$ carries a canonical $\phi\times \phi^{[n]}$-linearisation. Therefore, we get

\begin{cor}
For any surface $X$ the Fourier-Mukai transform $F=\FM_{\cI_\Xi}$ with the structure sheaf of the universal family as kernel descends to 
a functor $\check F_n\colon \Db([X/\phi])\to\Db([X^{[n]}/\phi^{[n]}])$.\qqed
\end{cor}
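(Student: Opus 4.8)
The plan is to realise the descent as an instance of Proposition \ref{descentthm}, so everything reduces to producing a $\mu$-linearisation of the kernel $\cI_\Xi$ for a suitable isomorphism $\mu$. Throughout I take $\phi\in\Aut(X)$ to be of finite order, as in the preceding discussion, so that $X\to[X/\phi]$ and $X^{[n]}\to[X^{[n]}/\phi^{[n]}]$ are genuine Galois covers. Since $\phi^{[n]}$ has the same order as $\phi$, sending $\phi\mapsto\phi^{[n]}$ defines a group isomorphism $\mu\colon H:=\langle\phi\rangle\to H':=\langle\phi^{[n]}\rangle$. As $H\subset\Aut(X)$ and $H'\subset\Aut(X^{[n]})$ consist of honest automorphisms, $\mu$ is automatically a $c$-isomorphism by Definition \ref{mu-linearised}, and the associated $H$-action on $\Coh(X\times X^{[n]})$ is simply pull-back along the powers of $g:=\phi\times\phi^{[n]}$, the bars occurring in $(\bar h\times\mu(h))^*$ being trivial here. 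Thus a $\mu$-linearisation of $\cI_\Xi$ is nothing but an ordinary $\langle g\rangle$-linearisation.

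First I would check the geometric input: the universal subscheme $\Xi\subset X\times X^{[n]}$ is invariant under $g$. Writing $\Xi=\{(x,Z)\mid x\in Z\}$ with its natural scheme structure, the automorphism $g$ sends $(x,Z)$ to $(\phi(x),\phi^{[n]}(Z))$; since $\phi^{[n]}(Z)$ is by construction the scheme-theoretic image of $Z$ under $\phi$, the incidence condition is preserved and $g(\Xi)=\Xi$ as closed subschemes. Consequently $\cI_\Xi\subset\reg_{X\times X^{[n]}}$ is a $g$-invariant subsheaf, i.e.\ $g^*\cI_\Xi\cong\cI_\Xi$.

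The crux of the argument is to upgrade this invariance to a genuine linearisation in the sense of Definition \ref{mu-linearisable}, and this is the step requiring care, since in general $\mu$-invariance is strictly weaker than $\mu$-linearisability (Lemma \ref{cyclic-inv-equiv}). Here I would argue that, because $\Xi$ is invariant as a \emph{subscheme}, the canonical $\langle g\rangle$-linearisation of $\reg_{X\times X^{[n]}}$ (coming from the tautological isomorphisms $g^*\reg\cong\reg$) restricts to $\cI_\Xi$: the defining short exact sequence $0\to\cI_\Xi\to\reg_{X\times X^{[n]}}\to\reg_\Xi\to 0$ is $g$-equivariant, so $\cI_\Xi$ inherits a linearisation whose cocycle condition is forced by that of $\reg_{X\times X^{[n]}}$. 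This produces the desired object $(\cI_\Xi,\nu)\in\Db_\mu(X\times X^{[n]})$ without any further hypotheses on $\cI_\Xi$. (One could alternatively observe that $H$ is cyclic, so $\Ho^2(H,\IC^*)=0$ and any $g$-invariant \emph{simple} kernel is automatically linearisable by Remark \ref{cysimple}; the canonical construction above has the advantage of not requiring one to verify simplicity of $\cI_\Xi$, which need not hold in general.)

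With the $\mu$-linearisation $\nu$ in hand, Proposition \ref{descentthm}, applied to the Galois covers $X\to[X/\phi]$ and $X^{[n]}\to[X^{[n]}/\phi^{[n]}]$ with $\widetilde\cX=X$ and $\widetilde\cY=X^{[n]}$, yields a descent $\check F_n\colon\Db([X/\phi])\to\Db([X^{[n]}/\phi^{[n]}])$ of $F=\FM_{\cI_\Xi}$, which is the claim. As indicated, the only point that is not entirely formal is the passage from invariance of the sheaf to a linearisation; it is precisely the canonical equivariant structure on $\reg_{X\times X^{[n]}}$ that discharges this, circumventing any potential obstruction.
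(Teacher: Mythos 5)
Your proposal is correct and follows essentially the same route as the paper: observe that $\Xi$ is invariant under $\phi\times\phi^{[n]}$, conclude that $\cI_\Xi$ carries a canonical $\mu$-linearisation (inherited from the canonical equivariant structure on $\reg_{X\times X^{[n]}}$, exactly as you spell out), and then invoke Proposition \ref{descentthm}. Your explicit justification of the passage from invariance of the subscheme to a linearisation of the ideal sheaf is precisely the content behind the paper's word ``canonical,'' so there is no gap and no divergence in method.
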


There are derived category versions of the Nakajima operators as defined in \cite{Krug-naka} given by Fourier-Mukai transforms 
\[H_{\ell,n}\colon \Db(X\times X^{[n]})\cong \Db_{\sym_\ell}(X\times X^\ell)\to \Db_{\sym_{n+\ell}}(X^{n+\ell})\cong \Db(X^{[n+\ell]})\,.\]
Basically, they are constructed using pushforwards along the closed embeddings $\delta_I\colon X\times X^{\ell}\to X^{n+\ell}$ onto the partial diagonals $\Delta_I=\{(x_1,\ldots,x_n)\mid x_i=x_j\forall i,j\in I\}$ for $I\subset \{1,\ldots,n+\ell\}$ with $|I|=n$. The graph of $\delta_I$ is invariant under the morphism $\phi\times\phi^{\times\ell}\times \phi^{\times n+\ell}$. Hence, we get

\begin{cor}
The Nakajima operators $H_{\ell,n}$ descend to
\[\check H_{\ell,n}\colon \Db\big([\frac{X\times X^{[n]}}{\phi\times \phi^{[\ell]}}]\big)\cong \Db_{\sym_\ell}\big([\frac{X\times X^\ell}{\phi \times\phi^{\times \ell}}]\big)\to \Db_{\sym_{n+\ell}}\big([\frac{X^{n+\ell}}{\phi^{\times n+\ell}}]\big)\cong \Db\big([\frac{X^{[n+\ell]}}{\phi^{[n+\ell]}}]\big).\]
For $n\ge \max\{\ell,2\}$ the $H_{\ell,n}$ are $\P^{n-1}$-functors satisfying (\ref{Pcond}); see \cite{Krug-naka}.
Therefore, the descents $\check H_{\ell,n}$ are $\P^{n-1}$-functors too.\qqed
\end{cor}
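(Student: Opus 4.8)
The plan is to realise the Fourier--Mukai kernel of $H_{\ell,n}$ as a $\mu$-linearisable object for the cyclic isomorphism $\mu\colon\langle\phi\times\phi^{[\ell]}\rangle\cong\langle\phi^{[n+\ell]}\rangle$ sending the one generator to the other, and then to invoke Proposition \ref{descentthm} together with Remark \ref{simplecyclic} both to produce the descent $\check H_{\ell,n}$ and to transport the $\P^{n-1}$-structure across it.

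First I would pin down the kernel. By \cite{Krug-naka} the operator $H_{\ell,n}$ is a Fourier--Mukai transform whose kernel $P$ is assembled from the structure sheaves $\reg_{\Gamma_{\delta_I}}$ of the graphs of the partial-diagonal embeddings $\delta_I$, carrying the symmetric-group equivariant structure implicit in the two identifications of the statement. The geometric input, already recorded in the text preceding the statement, is that each graph $\Gamma_{\delta_I}$ is invariant under $\phi\times\phi^{\times\ell}\times\phi^{\times(n+\ell)}$; since $\phi$ commutes with the permutation actions on $X^\ell$ and on $X^{n+\ell}$, the assembled kernel $P$ is then $\langle\phi\rangle$-invariant in the sense of Definition \ref{mu-linearisable}.

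Next I would upgrade invariance to linearisability. Because $H_{\ell,n}$ is, for $n\ge\max\{\ell,2\}$, a $\P^{n-1}$-functor satisfying (\ref{Pcond}), its kernel $P$ is simple by formula (\ref{RFformula}). As the deck group $\langle\phi\rangle$ is cyclic, Remark \ref{cysimple} --- equivalently Lemma \ref{cyclic-inv-equiv}(ii) --- converts the $\langle\phi\rangle$-invariance of $P$ into a genuine $\mu$-linearisation $\nu$, so that $(P,\nu)$ becomes a $\mu$-linearised kernel.

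With this $\mu$-linearisation in hand, Proposition \ref{descentthm} yields at once the descent $\check H_{\ell,n}$ as displayed, and, since $P$ is a $\P^{n-1}$-kernel satisfying (\ref{Pcond}), the same proposition guarantees that $\check H_{\ell,n}$ is again a $\P^{n-1}$-functor. I expect the only genuinely delicate point to be the compatibility bookkeeping: one must verify that the added $\langle\phi\rangle$-linearisation is compatible with the ambient symmetric-group linearisations already built into the source and target stacks. This holds precisely because $\phi$ commutes with the permutation actions, so the combined equivariant datum is well defined and the descent machinery of Section \ref{th3} applies verbatim.
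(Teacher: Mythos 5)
Your proposal takes a genuinely different route from the paper's, and the difference costs you part of the statement. The paper's argument is more elementary: since each graph $\Gamma_{\delta_I}$ is a closed substack invariant under $\phi\times\phi^{\times\ell}\times\phi^{\times(n+\ell)}$, its structure sheaf --- and hence the kernel of $H_{\ell,n}$, which is assembled from these sheaves compatibly with the $\sym_\ell\times\sym_{n+\ell}$-equivariant structure --- carries a \emph{canonical} $\mu$-linearisation: the identifications $P\cong(\bar h\times\mu(h))^*P$ are canonical isomorphisms of structure sheaves of one and the same substack, so the cocycle condition holds automatically. This needs neither simplicity of $P$ nor cyclicity of the deck group, and therefore yields the descent $\check H_{\ell,n}$ for \emph{all} $\ell$ and $n$, exactly as in the two preceding corollaries for $\reg_{I^nX}$ and $\cI_\Xi$. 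You instead upgrade $\mu$-invariance to $\mu$-linearisability via Lemma \ref{cyclic-inv-equiv}(ii), which requires $P$ to be simple; your only source of simplicity is formula (\ref{RFformula}) applied to the $\P^{n-1}$-kernel property, which \cite{Krug-naka} provides only for $n\ge\max\{\ell,2\}$.

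Consequently your argument proves the corollary exactly in the range $n\ge\max\{\ell,2\}$, where it is correct: there the simplicity claim, the passage from invariance to linearisability over the cyclic group $\langle\phi\rangle$, and the transport of the $\P^{n-1}$-structure through Propositions \ref{descentthm} and \ref{Plift} all go through as you describe. But the first assertion of the corollary --- the existence of the descent $\check H_{\ell,n}$ --- is claimed for all $\ell,n$, and your proof leaves the cases $n=1$ and $n<\ell$ open, since there you have given no reason for the kernel to be simple. One smaller remark: the ``compatibility bookkeeping'' you flag at the end is not actually an extra verification in your setup. Working at the level of the stacks $X\times\cS^\ell X$ and $\cS^{n+\ell}X$, a $\mu$-linearisation of $P$ as an object of $\Db\big((X\times\cS^\ell X)\times\cS^{n+\ell}X\big)\cong\Db_{\sym_\ell\times\sym_{n+\ell}}(X\times X^\ell\times X^{n+\ell})$ consists by definition of isomorphisms in the symmetric-group-equivariant category, so compatibility with the ambient linearisations is built in; the genuine issue in your approach is solely the simplicity hypothesis, and it disappears once you replace the simplicity/cyclicity step by the canonical linearisation coming from the invariance of the graphs.
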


Let $\widetilde X$ be a K3 surface with a fixed point free involution $\tau\colon X\to X$ and
let $X=\widetilde X/\tau$ be the corresponding Enriques surface.
For $n\in \N$ an odd number, the induced automorphism $S^n\tau\colon S^n\widetilde X\to S^n\widetilde X$ is a fixed point free involution. Hence, the same holds for $\tau^{[n]}$ and $\tau^{\times n}$. The quotient $X_{[n]}:=\widetilde X^{[n]}/\tau^{[n]}$ is the first example of a higher dimensional Enriques manifold in the sense of \cite{Oguiso-Schroer}. Note that $\tau^{\times n}$ commutes with the $\sym_n$-action on $\widetilde X^n$. Thus there is an induced $\sym_n$-action on $X_n:=\widetilde X^n/\tau^{\times n}$. By the above discussion, we get an equivalence 
\[\check\Phi\colon \Db(X_{[n]})\xrightarrow \cong \Db_{\sym_n}(X_n)\,.\]
In fact, this itself is a special case of the BKR theorem. Indeed, one can show quite easily that $X_{[n]}\cong \Hilb^{\sym_n}(X_n)$.

In \cite{Addington} it is proved that for a K3 surface $\widetilde X$ the Fourier-Mukai transform $F_n\colon\Db(\widetilde X)\cong \Db(\widetilde X^{[n]})$ is a $\P^{n-1}$-functor. Therefore, the descent $\check F_n\colon \Db(X)\to \Db(X_{[n]})$ is again a $\P^{n-1}$-functor by Proposition \ref{descentthm}.

Further examples of Enriques manifolds are given by quotients of generalised Kummer varieties; see \cite{Oguiso-Schroer}, \cite{BNWS}. Again, the quotients come from fixed point free automorphisms of finite order on the Kummer varieties which are naturally induced by automorphisms of the abelian surface. Therefore, the BKR-H equivalence descends to the corresponding quotients.

Furthermore, the Fourier-Mukai transform along the universal ideal sheaf is a $\P^n$-functor for generalised Kummer varieties; see \cite{Meachan}. It descends to a $\P^n$-functor from the derived category of the hyperelliptic quotient of the abelian surface to the derived category of the Enriques quotient of the generalised Kummer variety.

\subsection{Calabi--Yau covers of Hilbert schemes}\label{CYcover}

Let $X$ be a smooth projective surface.
The line bundle $L^{\boxtimes n}\in \Pic(X^n)$ carries a canonical $\sym_n$-linearisation.
There is also the induced line bundle $L_n:=\HC^*((\pi_*L^{\boxtimes n})^{\sym_n})$ (here we use notation from the previous subsection). 
Writing again $\Phi$ for the BKR-H equivalence, we have
\begin{align}\label{lineBKR}
\Phi\circ \MM_{L_n}\cong \MM_{L^{\boxtimes n}}\circ\Phi\colon \Db(X^{[n]})\to \Db_{\sym_n}(X^n)\,; 
\end{align}
see, for example, \cite[Lem.\ 9.2]{Krug-tensor}.

Let now $L$ be of finite order. By (\ref{lineBKR}) together with Proposition \ref{liftthm} and Remark \ref{simplecyclic} it follows that $\Phi$ lifts to an equivalence
\[
 \widetilde \Phi\colon \Db((X^{[n]})_{L_n})\xrightarrow \cong \Db_{\sym_n}((X^n)_{L^{\boxtimes n}})\cong \Db((\cS^nX)_{L^{\boxtimes n}})\,.
\]
Again, this equivalence is a special case of the BKR theorem as $(X^{[n]})_{L_n}\cong \Hilb^{\sym_n}((X^n)_{L^{\boxtimes n}})$. 

Also, there is the relation $\MM_{L^{\boxtimes (n+\ell)}}\circ H_{\ell,n}\cong H_{\ell,n}\circ \MM_{L^n\boxtimes L^{\boxtimes \ell}}$. This shows that for $\ell\ge 1$ there is a lift to a $\P^{n-1}$-functor 
\[
 \widetilde H_{\ell,n}\colon \Db((X\times X^{[\ell]})_{L^n\boxtimes L_\ell})\to \Db(X^{[n+\ell]}_{L_{n+\ell}})\,. 
\]

\begin{remark}
Probably the most interesting special case is when $X$ is an Enriques surface and $L=\omega_X$ is the canonical bundle. We have $(\omega_X)_n=\omega_{X^{[n]}}$ and the canonical cover $\CY_n(X):=\widetilde{X^{[n]}}:=X^{[n]}_{\omega_{X^{[n]}}}$ is a Calabi--Yau variety; see \cite[Prop.\ 1.6]{NieperW-twisted}.

There is a second $2n$-dimensional Calabi-Yau variety induced by an Enriques surface $X$. Namely, the canonical cover $\widetilde \CY_n(X):=\widetilde{X^n}$ of the Cartesian product. 
Indeed, a smooth projective variety $M$ is Calabi-Yau if and only if $\reg_M$ is a spherical object. Furthermore, $\reg_{X^n}=\reg_X^{\boxtimes n}$ is exceptional and hence $\reg_{\widetilde{X^n}}=\pi^*\reg_{X^n}$ is spherical; see \cite[Prop.\ 3.13]{Seidel-Thomas} or \cite[Rem.\ 3.11]{KSos}. We have $\Db(\CY_n(X))\cong \Db(\widetilde\CY_n(X))^{\sym_n}$. 

More generally, let $X_1,\ldots ,X_m$ be smooth projective varieties with torsion canonical bundles of order 2 such that the structure sheaves $\reg_{X_i}$ are exceptional. Then $\reg_{X_1\times \ldots\times X_m}$ is again exceptional and thus the canonical cover $\widetilde \CY(X_1,\ldots,X_m):=\widetilde{X_1\times \ldots\times X_m}$ is Calabi-Yau.

Going the other way around, there are Calabi-Yau varieties with fixed point free involutions, so the quotient has canonical bundle of order $2$ and its structure sheaf is an exceptional object, see \cite[Subsect.\ 4.3]{BNWS}.
\end{remark}

\appendix
\section{Necessary condition for lifts}
In this section we will prove a converse of Theorem \ref{liftthm}, namely
\begin{prop}\label{neccond}
Let $\pi_1\colon\widetilde \cX\to \cX$ and $\pi_2\colon \widetilde \cY\to \cY$ be Galois covers of smooth projective stacks with abelian groups of deck transformations $G$ and $G'$, respectively, and let $\Phi=\FM_P\colon \Db(\cX)\to \Db(\cY)$ be a fully faithful functor. Then, for $\Phi$ to lift to a fully faithful functor $\Psi\colon\Db(\tilde \cX)\to \Db(\tilde \cY)$ it is necessary that the FM kernel $P\in \Db(\cX\times \cY)$ is $\mu$-linearisable for some isomorphism $\mu\colon \hat G\cong \hat G'$.   
\end{prop}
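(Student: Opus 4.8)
The plan is to pass to the equivariant picture, represent the lift by a kernel, and then read off a $\mu$-linearisation of $P$ from the way that kernel sits over $\cX\times\cY$. First I would use Proposition \ref{abdual}: since $G$ and $G'$ are abelian, the duality gives $\Db(\widetilde\cX)\cong\Db_{\hat G}(\cX)$ and $\Db(\widetilde\cY)\cong\Db_{\hat G'}(\cY)$, under which $\pi_{i*}$ becomes $\Res$ and $\pi_i^*$ becomes $\Inf$ (as recalled before Proposition \ref{liftthm}). Thus the hypothesis says exactly that $\Phi=\FM_P$ admits a fully faithful lift $\Psi\colon\Db_{\hat G}(\cX)\to\Db_{\hat G'}(\cY)$ in the sense of Definition \ref{equiliftdef}. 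Being fully faithful between derived categories of smooth projective stacks, $\Psi$ is itself a Fourier--Mukai transform, $\Psi\cong\FM_{\widetilde P}$ for a kernel $\widetilde P\in\Db(\widetilde\cX\times\widetilde\cY)$; here one invokes the representability criterion behind (\ref{FMcond}), and full faithfulness forces $\widetilde P$ to be simple, as in the remark following Theorem \ref{mainthm}.

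Next I would extract the relation between $\widetilde P$ and $P$ from the two defining identities of a lift. Writing out $\Phi\circ\pi_{1*}\cong\pi_{2*}\circ\Psi$ on the level of kernels (using the convolution calculus and Lemma \ref{Orlovlem}) gives $(\id_{\widetilde\cX}\times\pi_2)_*\widetilde P\cong(\pi_1\times\id_\cY)^*P$, and pushing this forward along $\pi_1\times\id$ yields
\[
(\pi_1\times\pi_2)_*\widetilde P\;\cong\;\bigoplus_{\chi\in\hat G}(L_\chi\boxtimes\reg)\otimes P,
\]
where $L_\chi\in\hat G\subset\Pic(\cX)$ denotes the character line bundles; the symmetric computation from the other identity presents the same object as $\bigoplus_{\chi'\in\hat G'}(\reg\boxtimes L_{\chi'})\otimes P$. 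Equivalently, under Proposition \ref{abdual} applied to the $(G\times G')$-cover $\widetilde\cX\times\widetilde\cY\to\cX\times\cY$, the object $\widetilde P$ corresponds to a genuine $(\hat G\times\hat G')$-linearised object whose underlying object is $\bigoplus_\chi M_\chi$ with $M_\chi:=(L_\chi\boxtimes\reg)\otimes P$.

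The simplicity of $P$ (which follows from full faithfulness of $\Phi$) now does the combinatorial work. Since $\chi\mapsto L_\chi$ is injective and $\Phi$ is fully faithful, an isomorphism $M_\chi\cong M_{\chi_0}$ forces $\chi=\chi_0$, so the $M_\chi$ are pairwise non-isomorphic indecomposables, and likewise the $N_{\chi'}:=(\reg\boxtimes L_{\chi'})\otimes P$. Comparing the two decompositions of $(\pi_1\times\pi_2)_*\widetilde P$ by Krull--Schmidt produces a bijection $\mu\colon\hat G\to\hat G'$ determined by $M_\chi\cong N_{\mu(\chi)}$, that is, by isomorphisms $P\cong(L_\chi^{-1}\boxtimes L_{\mu(\chi)})\otimes P=(\bar h\times\mu(h))^*P$ for $h=L_\chi$. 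In particular $P$ is $\mu$-invariant in the sense of Definition \ref{mu-linearisable}, and uniqueness of these matchings shows $\mu$ is a group isomorphism (compare Lemma \ref{cyclic-inv-equiv}).

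The last and hardest step is to upgrade $\mu$-invariance to $\mu$-linearisability; for non-cyclic $\hat G$ this is a genuine cohomological issue, since the obstruction of Lemma \ref{simple} lives in $\Ho^2(\hat G,\IC^*)$, which need not vanish, so the argument must use the honest object $\widetilde P$ and not merely the isomorphisms above. Here the point is that the $(\hat G\times\hat G')$-action permutes the pairwise non-isomorphic summands $M_\chi$ in a single orbit, and the stabiliser of the distinguished summand $M_0=P$ is precisely the graph subgroup $K_\mu=\{(L_\chi^{-1},L_{\mu(\chi)}):\chi\in\hat G\}\cong\hat G$, whose tensor action on $\Db(\cX\times\cY)$ is exactly the $\mu$-action of Definition \ref{mu-linearisable}. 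Thus $\widetilde P$ is induced from $K_\mu$, and restricting its genuine $(\hat G\times\hat G')$-linearisation to $K_\mu$ and splitting off the $M_0$-summand yields a cocycle-consistent $K_\mu$-linearisation of $P$, i.e.\ a $\mu$-linearisation. The main obstacle is precisely this extraction: because the linearisation isomorphisms need not be block-diagonal with respect to the summand decomposition, one has to argue via Krull--Schmidt in the Karoubian equivariant category $\Db_\mu(\cX\times\cY)$ — equivalently, via the adjunction $\Inf^{\hat G\times\hat G'}_{K_\mu}\dashv\Res$ of Lemma \ref{ResInfadjunction} — that $P$ carries a $K_\mu$-equivariant structure, rather than naively restricting matrix entries.
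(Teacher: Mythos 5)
Up to the construction of the isomorphism $\mu$, your argument is correct and runs parallel to the paper's own proof: the paper likewise represents the lift by a kernel $\cQ\in\Db(\widetilde\cX\times\widetilde\cY)\cong\Db_{\hat G\times\hat G'}(\cX\times\cY)$ and uses uniqueness of Fourier--Mukai kernels via condition (\ref{FMcond}) to identify $(\pi_1\times\pi_2)_*\cQ$ with $\bigoplus_{\cL'\in\hat G'}(\reg\boxtimes\cL')\otimes P$. Your variant of deriving the two decompositions from the two lift identities separately (rather than from the single composite $\Phi\pi_{1*}\pi_1^*\cong\bigoplus_{\cL\in\hat G}\Phi\circ\MM_\cL$ as in the paper) is harmless, since full faithfulness of $\Phi$, respectively of $\Psi$, gives (\ref{FMcond}) for the relevant functors.

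The genuine gap is in your last step, which is the decisive one: passing from $\mu$-invariance to an actual $\mu$-linearisation. You correctly locate the difficulty (the obstruction of Lemma \ref{simple} in $\Ho^2(\hat G,\IC^*)$), but your resolution --- ``one has to argue via Krull--Schmidt in $\Db_\mu(\cX\times\cY)$, equivalently via $\Inf\dashv\Res$, that $P$ carries a $K_\mu$-equivariant structure'' --- is a restatement of the claim, not an argument. Worse, Krull--Schmidt in the equivariant category cannot deliver this on its own: an indecomposable object of $\Db_\mu(\cX\times\cY)$ can have decomposable underlying object, and this happens \emph{exactly} when the obstruction class is non-trivial (the simple module over a twisted group algebra $\IC_\alpha[K]$ with $[\alpha]\neq 0$ is equivariantly indecomposable but restricts to $\IC^{\oplus m}$, $m>1$); so from an equivariant decomposition of $\Res_{K_\mu}\widetilde P$ one cannot conclude that $P$ underlies one of the pieces --- that is precisely what must be proved. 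The ingredient you are missing is the paper's key Lemma \ref{nohoms}: $\Hom((\cL_1\boxtimes\reg)\otimes P,(\cL_2\boxtimes\reg)\otimes P)=0$ for $\cL_1\neq\cL_2\in\hat G$, proved by adjunction from $P^R\star P\cong\reg_{\Delta\cX}$ together with $\Hom(\Delta_*\cL_1,\Delta_*\cL_2)=0$ for distinct torsion line bundles (Lemma \ref{Delsimple}). This orthogonality is strictly stronger than the pairwise non-isomorphy you establish, and it makes the ``naive restriction of matrix entries'' that you dismiss perfectly rigorous: every component of $\nu_{(\cL^{-1},\reg)}$ out of the summand $P$ vanishes except the one landing in $(\cL^{-1}\boxtimes\mu(\cL))\otimes P$, that component is then forced to be an isomorphism, and the cocycle condition for $\nu$ restricts componentwise --- this is the entire remaining content of the paper's proof. (Alternatively, your route could be repaired without Lemma \ref{nohoms}: since the $M_\chi$ are pairwise non-isomorphic with $\End(M_\chi)=\IC$, any composition $P\to M_\chi\to P$ through another summand is a non-invertible endomorphism of a simple object, hence zero, and invertibility in a Krull--Schmidt category is detected modulo the radical, so the diagonal components of the restricted linearisation are isomorphisms satisfying the cocycle condition. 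But some such argument must actually be supplied; as written, your proof stops exactly at its hardest point.)
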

Note that for $\cL\in \hat G\subset\Pic(\cX)$ the FM kernel of the autoequivalence $\MM_\cL$ is given by $\Delta_*\cL\in \Db(\cX\times \cX)$. 
\begin{lemma}\label{Delsimple}
For $\cL_1\neq \cL_2\in \hat G$ we have $\Hom_{\Db(\cX\times \cX)}(\Delta_*\cL_1,\Delta_*\cL_2)=0$.
\end{lemma}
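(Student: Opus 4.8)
The plan is to move the computation from $\cX\times\cX$ down to $\cX$ via the adjunction $\Delta^*\dashv\Delta_*$ (all functors derived, as in our conventions), so that
\[
\Hom_{\Db(\cX\times\cX)}(\Delta_*\cL_1,\Delta_*\cL_2)\cong\Hom_{\Db(\cX)}(\Delta^*\Delta_*\cL_1,\cL_2).
\]
First I would record the standard self-intersection formula for the diagonal of the smooth stack $\cX$: since $\Delta$ is a regular closed immersion with conormal bundle $\Omega^1_\cX$, the complex $\Delta^*\Delta_*\cL_1$ has cohomology sheaves
\[
\mathcal H^{-i}(\Delta^*\Delta_*\cL_1)\cong\cL_1\otimes\Omega^i_\cX,\qquad 0\le i\le\dim\cX,
\]
and vanishes in positive degrees. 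This holds verbatim for our smooth quotient stacks, the diagonal being a regular immersion; I would invoke the usual Koszul/HKR filtration for it.

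Next I would feed this into the hyper-$\Ext$ spectral sequence
\[
E_2^{p,q}=\Ext^p_\cX\bigl(\mathcal H^{-q}(\Delta^*\Delta_*\cL_1),\cL_2\bigr)\;\Longrightarrow\;\Hom^{p+q}_{\Db(\cX)}(\Delta^*\Delta_*\cL_1,\cL_2).
\]
A class in the degree-$0$ part needs $p+q=0$; but $\mathcal H^{-q}\neq 0$ forces $q\ge 0$, while $\Ext^p_\cX=0$ for $p<0$ since these are genuine sheaves. Hence only the corner $E_2^{0,0}=\Hom_\cX(\cL_1,\cL_2)$ survives, and
\[
\Hom_{\Db(\cX\times\cX)}(\Delta_*\cL_1,\Delta_*\cL_2)\cong\Hom_\cX(\cL_1,\cL_2)=\Ho^0\bigl(\cX,\cL_1^{-1}\otimes\cL_2\bigr).
\]

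It remains to see that these global sections vanish. Under the identification of $\hat G$ with $\ker(\pi_1^*)\subset\Pic(\cX)$, the bundles $\cL_1,\cL_2$ correspond to characters $\rho_1,\rho_2\in\hat G$, so $\cL_1^{-1}\otimes\cL_2$ corresponds to $\rho:=\rho_2\rho_1^{-1}\in\hat G$, which is nontrivial precisely because $\cL_1\neq\cL_2$. Viewing $\cX=[\widetilde\cX/G]$, this bundle is $\reg_{\widetilde\cX}$ equipped with the linearisation twisted by $\rho$, so its global sections are the $G$-invariants of $\Ho^0(\widetilde\cX,\reg_{\widetilde\cX})=\C$ (using that $\widetilde\cX$ is connected), where $G$ now acts through the scalar character $\rho$. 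Since $\rho\neq\triv$, these invariants vanish, proving the claim. The main obstacle is this final step: one must pin down that a bundle in $\ker(\pi_1^*)$ really is the structure sheaf re-linearised by a character, and that the twist makes the $G$-action on the constants nontrivial; the diagonal formula and the collapse of the spectral sequence are then routine.
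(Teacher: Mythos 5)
Your overall strategy coincides with the paper's: reduce $\Hom_{\Db(\cX\times\cX)}(\Delta_*\cL_1,\Delta_*\cL_2)$ to $\Hom_\cX(\cL_1,\cL_2)\cong\Gamma(\cX,\cL_1^{-1}\otimes\cL_2)$ and then show these sections vanish; and your final step (viewing $\cL_1^{-1}\otimes\cL_2$ as $\reg_{\widetilde\cX}$ with a linearisation twisted by a non-trivial character $\rho$, so that its sections are the $\rho$-invariants of $\C$, hence zero) is a perfectly valid variant of the paper's remark that a non-trivial torsion line bundle has no non-zero sections. However, there is a genuine error in your second step. For the stacks of this paper the diagonal $\Delta\colon\cX\to\cX\times\cX$ is \emph{not} a closed immersion (let alone a regular one) unless $\cX$ is an algebraic space: for a Deligne--Mumford stack with non-trivial stabilisers, $\cX\times_{\cX\times\cX}\cX$ is the inertia stack, not $\cX$. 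Concretely, for $\cX=[X/G]$ the object $\Delta_*\cL_1$ corresponds to $\bigoplus_{g\in G}$ of sheaves supported on the graphs $\Gamma_g\subset X\times X$, so $\mathcal{H}^0(\Delta^*\Delta_*\cL_1)$ acquires extra summands supported on the fixed loci $X^g$, $g\neq e$ (the twisted sectors), and the formula $\mathcal{H}^{-i}(\Delta^*\Delta_*\cL_1)\cong\cL_1\otimes\Omega^i_\cX$ fails. This is not a pedantic point: this inertia contribution is exactly the phenomenon behind the orbifold decomposition (\ref{HHdecomp}) of Hochschild homology used in Section \ref{Section-Applications}, and the lemma is applied in Proposition \ref{neccond} to stacks with non-trivial stabilisers (symmetric quotient stacks, Kummer stacks, etc.), so one cannot simply restrict attention to the variety case where your justification would be correct.

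The gap is repairable, which is why your conclusion survives. Since $\Delta$ is a finite morphism, $\Delta_*$ is exact and $\Delta^*\Delta_*\cL_1$ is still concentrated in degrees $\le 0$; your spectral-sequence argument only needs the corner $(p,q)=(0,0)$, so all that must be identified is $E_2^{0,0}=\Hom_\cX(\mathcal{H}^0(\Delta^*\Delta_*\cL_1),\cL_2)$. Now $\mathcal{H}^0(\Delta^*\Delta_*\cL_1)\cong\cL_1\oplus\cT$, where $\cT$ is supported on the locus of points with non-trivial stabiliser; since the paper's stacks have generically trivial stabilisers and irreducible coarse space, $\cT$ is a torsion sheaf, and $\Hom_\cX(\cT,\cL_2)=0$ because $\cL_2$ is a line bundle. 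Hence $E_2^{0,0}=\Hom_\cX(\cL_1,\cL_2)$ after all, and the remainder of your argument goes through verbatim. As written, though, the key identification rests on an assertion that is false for precisely the spaces the lemma is about, so this step needs the above (or an equivalent) repair.
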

\begin{proof}
We have $\Hom(\Delta_*\cL_1,\Delta_*\cL_2)\cong \Hom(\cL_1,\cL_2)\cong \Gamma(\cX,\cL_1^{-1}\otimes \cL_2)$. Since $\cL_1^{-1}\otimes \cL_2$ is a non-trivial torsion line bundle, it does not have non-zero global sections. \end{proof}

\begin{lemma}\label{nohoms}
Let $P\in \Db(\cX\times\cY)$ such that $\Phi=\FM_P\colon \Db(\cX)\to\Db(\cY)$ is fully faithful. 
Then $\Hom_{\Db(\cX\times \cY)}((\cL_1\boxtimes \reg)\otimes P,(\cL_2\boxtimes \reg)\otimes P)=0$ for $\cL_1\neq\cL_2\in\hat G$. 
\end{lemma}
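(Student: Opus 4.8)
The plan is to rewrite the two kernels appearing in the lemma as convolutions and then transport the $\Hom$ to the identity kernel via the adjunction formula (\ref{adjointformula}). Concretely, set $K_i := (\cL_i\boxtimes\reg)\otimes P$. Since tensoring by a line bundle pulled back from the first factor is the same as convolving with $\Delta_*\cL_i$, we have $K_i\cong P\star\Delta_*\cL_i$, so that $K_i$ is the kernel of $\Phi\circ\MM_{\cL_i}$. First I would record the adjoint kernels: because $\MM_{\cL_i}$ is an equivalence with inverse $\MM_{\cL_i^{-1}}$, its right adjoint kernel is $(\Delta_*\cL_i)^R\cong\Delta_*\cL_i^{-1}$, and hence $K_1^R\cong\Delta_*\cL_1^{-1}\star P^R$.

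Next, writing $K_1\cong K_1\star\reg_{\Delta\cX}$ and applying (\ref{adjointformula}) in its non-equivariant form, with the first $\cX$-factor playing the role of the spectator $\cZ$, gives
\[
\Hom_{\Db(\cX\times\cY)}(K_1,K_2)\cong\Hom_{\Db(\cX\times\cX)}(\reg_{\Delta\cX},K_1^R\star K_2).
\]
I would then compute the right-hand kernel using associativity of the convolution product:
\[
K_1^R\star K_2\cong\Delta_*\cL_1^{-1}\star(P^R\star P)\star\Delta_*\cL_2.
\]
Because $\Phi=\FM_P$ is fully faithful, $P^R\star P\cong\reg_{\Delta\cX}$ as objects (this is exactly the object-level identity already invoked in the proof of Proposition \ref{equilift}), and convolution of two diagonal line-bundle kernels satisfies $\Delta_*\cL_1^{-1}\star\Delta_*\cL_2\cong\Delta_*(\cL_1^{-1}\otimes\cL_2)$, corresponding to $\MM_{\cL_1^{-1}}\circ\MM_{\cL_2}=\MM_{\cL_1^{-1}\otimes\cL_2}$. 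Hence $K_1^R\star K_2\cong\Delta_*(\cL_1^{-1}\otimes\cL_2)$.

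Finally, since $\reg_{\Delta\cX}=\Delta_*\reg$ and $\cL_1^{-1}\otimes\cL_2$ is a nontrivial element of $\hat G$ (as $\cL_1\neq\cL_2$ and $\hat G$ is a group), Lemma \ref{Delsimple} yields $\Hom(\Delta_*\reg,\Delta_*(\cL_1^{-1}\otimes\cL_2))=0$, which completes the argument. The only genuinely delicate points are the passage from full faithfulness of the functor to the \emph{object-level} isomorphism $P^R\star P\cong\reg_{\Delta\cX}$, and the careful matching of the spectator and convolution factors in (\ref{adjointformula}); both are routine in this setting, the former being precisely the step used in Proposition \ref{equilift}.
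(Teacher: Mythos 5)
Your proof is correct and takes essentially the same route as the paper's: both rewrite the kernels as $P\star\Delta_*\cL_i$, combine the kernel-level adjunction formula (\ref{adjointformula}) with the object-level isomorphism $P^R\star P\cong\reg_{\Delta\cX}$ coming from full faithfulness, and conclude by Lemma \ref{Delsimple}. The only cosmetic differences are that you justify $(\cL_i\boxtimes\reg)\otimes P\cong P\star\Delta_*\cL_i$ by the projection formula rather than by uniqueness of FM kernels, and you transport the Hom via $K_1\dashv K_1^R$ to $\Hom(\reg_{\Delta\cX},\Delta_*(\cL_1^{-1}\otimes\cL_2))$, whereas the paper applies $P\dashv P^R$ directly to land on $\Hom(\Delta_*\cL_1,\Delta_*\cL_2)$.
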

\begin{proof}
Since $\Phi\circ \MM_{\cL_1}$ is fully faithful, its FM kernel is unique up to isomorphism. Thus, by Lemma \ref{Orlovlem} we have $P\star \Delta_*\cL_1\cong (\cL_1\boxtimes \reg)\otimes P$. Since $\Phi$ is fully faithful, $P^R\star P\cong \reg_{\Delta\cX}$. By (\ref{adjointformula}) together with the previous lemma 
\begin{align*}
 \Hom((\cL_1\boxtimes \reg)\otimes P,(\cL_2\boxtimes \reg)\otimes P)&\cong \Hom(P\star \Delta_*\cL_1, P\star \Delta_*\cL_1)\\&\cong \Hom(\Delta_*\cL_1,\Delta_*\cL_2)=0\,.\qedhere
\end{align*}
\end{proof}    
\begin{proof}[Proof of Proposition \ref{neccond}]
Let $\Psi\colon \Db(\widetilde\cX)\to \Db(\widetilde \cY)$ be a lift of $\Phi$ with FM kernel $\cQ\in \Db(\widetilde\cX\times \widetilde\cY)$. By the definition of  a lift, we have $\Phi \pi_{1*} \pi_1^*\cong\pi_{2*}\Psi\pi_1^*\cong \pi_{2*}\pi_2^*\Phi$. The functor $\Phi \pi_{1*} \pi_1^*\cong \oplus_{\cL\in \hat G} \Phi\circ\MM_\cL$ satisfies (\ref{FMcond}) so that its FM kernel is unique up to isomorphism. By Lemma \ref{Orlovlem} the FM kernels of $\pi_{2*}\Psi\pi_1^*$ and $\pi_{2*}\pi_2^*\Phi$ are $Q:=(\pi\times \pi)_*\cQ$ and $\oplus_{\cL'\in \hat G'}(\reg\boxtimes \cL')\otimes P$, respectively. Hence, \begin{align}\label{iso}Q\cong\oplus_{\cL'\in \hat G'}(\reg\boxtimes \cL')\otimes P\,.\end{align}
Because of the identification $\Db(\tilde \cX\times \tilde \cY)\cong \Db_{\hat G\times \hat G'}(\cX\times \cY)$, the object $Q$ carries a $\hat G\times \hat G'$-linearisation $\nu$. By Lemma \ref{nohoms}, for every $\cL\in\hat G$ there is a unique $\mu(\cL)\in \hat G'$ such that the isomorphism $\nu_{(\cL^{-1},\reg)}\colon Q\to (\cL^{-1}\boxtimes \reg)\otimes Q$ restricts under (\ref{iso}) to an isomorphism 
\begin{align}\label{mulin}P\to (\cL^{-1}\boxtimes \reg)\otimes (\reg\boxtimes \mu(\cL))\otimes P\cong (\cL^{-1}\boxtimes \mu(\cL))\otimes P\,.\end{align} 
By the uniqueness of $\mu(\cL)$, the map $\mu$ is a homomorphism and (\ref{mulin}) defines a $\mu$-linearisation of $P$.       
\end{proof}
Analogously, one can prove a partial converse of Theorem \ref{descentthm}.
%


\end{document}